\theoremstyle{plain}
\newtheorem{theorem}{Theorem}[section]
\newtheorem{lemma}[theorem]{Lemma}
\newtheorem{corollary}[theorem]{Corollary}
\newtheorem{definition}[theorem]{Definition}
\theoremstyle{remark}
\newtheorem{remark}[theorem]{Remark}
\newtheorem{example}[theorem]{Example}
\numberwithin{equation}{section}
\newcommand{\D}{D}
\newcommand{\bA}{\mathbb{A}}
\newcommand{\bB}{\mathbb{B}}
\newcommand{\B}{\mathbb{B}}
\newcommand{\K}{\mathbb{K}}
\newcommand{\bk}{\mathbf{k}}
\newcommand{\bK}{\mathbb{K}}
\newcommand{\R}{\mathbb{R}}
\newcommand{\C}{\mathbb{C}}
\newcommand{\N}{\mathbb{N}}
\newcommand{\bW}{\mathbb{W}}
\newcommand{\cM}{{\mathcal M}}
\newcommand{\Man}{{\mathcal Man}}
\newcommand{\Bun}{{\mathcal Bun}}
\newcommand{\SBun}{{\mathcal SBun}}
\newcommand{\cN}{\mathcal{N}}
\newcommand{\CC}{\mathcal{C}}
\newcommand{\aaa}{\boldsymbol{a}}
\newcommand{\sss}{\boldsymbol{s}}
\newcommand{\ttt}{\boldsymbol{t}}
\newcommand{\vvv}{\boldsymbol{v}}
\newcommand{\uuu}{\boldsymbol{u}}
\newcommand{\ooo}{\boldsymbol{0}}
\newcommand{\Alpha}{{\boldsymbol{\alpha}}}
\newcommand{\Diff}{\mathrm{Diff}}
\newcommand{\Gl}{\mathrm{Gl}}
\newcommand{\Aut}{\mathrm{Aut}}
\newcommand{\Pol}{\mathrm{Pol}}
\newcommand{\GPol}{\mathrm{GP}}
\newcommand{\GL}{\mathrm{GL}}
\newcommand{\Tay}{\mathrm{Tay}}
\newcommand{\id}{\mathrm{id}}
\newcommand{\pr}{\mathrm{pr}}
\newcommand{\SJ}{{\mathrm{J}}}
\newcommand{\eps}{\varepsilon}
\newcommand{\Bigsetof}[2]{\begin{Bmatrix} #1 \,\Big|\, #2 \end{Bmatrix}}
\newcommand{\inv}{^{-1}}
\newcommand{\msk}{\medskip}
\newcommand{\ssk}{\smallskip}
\newcommand{\nin}{\noindent}
\newcommand{\la}{\langle}
\newcommand{\ra}{\rangle}
\newcommand{\tb}{\textcolor{blue}}
\newcommand{\ts}{\textcolor{orange}}
\DeclareSymbolFont{boldsymbols}{OMS}{cmsy}{b}{n}
\begin{document}

\title{A general construction of Weil functors}

\author{Wolfgang Bertram, Arnaud Souvay}

\address{Institut \'{E}lie Cartan Nancy \\
Nancy-Universit\'{e}, CNRS, INRIA \\
Boulevard des Aiguillettes, B.P. 239 \\
F-54506 Vand\oe{}uvre-l\`{e}s-Nancy, France}

\email{\url{Wolfgang.Bertram@iecn.u-nancy.fr}\\
\url{Arnaud.Souvay@iecn.u-nancy.fr}}

\subjclass[2010]{ 
13A02, 
13B02, 
15A69, 
18F15, 
58A05, 
58A20, 
58A32, 
58B10, 
58B99, 
58C05. 
}

\keywords{Weil functor, Taylor expansion, scalar extension, polynomial bundle, jet, differential calculus.
}

\begin{abstract} 
We construct the Weil functor $T^\bA$ corresponding to a general Weil algebra
$\bA = \K \oplus \cN$: this is a functor from the category of manifolds over a general
topological base field or ring $\K$ (of arbitrary characteristic) to the category of 
manifolds over $\bA$. This result simultaneously generalizes  results known for ordinary,
real manifolds (cf.\ \cite{KMS}), and results obtained in \cite{Be08} for the case
of the higher order tangent functors ($\bA = T^k \K$) and in \cite{Be10} for the case
of jet rings ($\bA = \K[X]/(X^{k+1})$). 
We investigate some algebraic aspects of these general Weil functors
(``$K$-theory of Weil functors'', action of the ``Galois group'' $\Aut_\K(\bA)$),
which will be of importance for subsequent applications to general 
differential geometry.
\end{abstract}

\maketitle

\section{Introduction}

The topic of the present work is the {\em construction and investigation of general Weil functors},  where
the term  ``general'' means: in arbitrary (finite or infinite) dimension, and over general topological
base fields or rings.  
Compared to the (quite vast) existing literature on Weil functors (see, e.g.,
\cite{KMS, Kolar, K00, KM04}), this adds two novel viewpoints:
on the one hand, extension of the theory to a very general context, including, for instance, base fields
of {\em positive characteristic}, and on the other hand, introduction of the point of view of
{\em scalar extension}, well-known in algebraic geometry, into the context of differential geometry.
This aspect is new, even in the context of usual, finite-dimensional real manifolds.
We start  by explaining this item. 

\msk
A quite elementary approach to
differential calculus and -geometry over general base fields or -rings $\K$ has been defined and studied
in \cite{BGN04, Be08}; see \cite{Be11} for an elementary exposition.
The term ``smooth'' always refers to the concept  explained there, and which is called ``cubic smooth''
in \cite{Be10}.  
The base ring $\K$ is a commutative unital topological ring such that  $\K^\times$, the unit group, is
open dense in $\K$, and the 
inversion map $\K^\times \to \K$, $t \mapsto t^{-1}$ is continuous.
For convenience,   the reader may assume that  $\K$  is a topological $\bk$-algebra over some
topological field $\bk$, where  $\bk$ is  his or her favorite field, for instance $\bk = \R$, 
and one may think of $\K$ as  $\R \oplus j \R$ with, for instance,
$j^2 = -1$ ($\K=\C$), or $j^2 = 1$ (the ``para-complex numbers'') or $j^2 = 0$ (``dual numbers'').
In our setting, the analog of the ``classical'' {\em Weil algebras}, as defined, e.g., in \cite{KMS}, is as follows:

\begin{definition}
\label{WeilalgebraDef}
A {\em Weil $\K$-algebra} is a  commutative and associative  $\K$-algebra $\bA$, with unit $1$, of the
form $\bA = \K \oplus \cN_\bA$, where $\cN = \cN_\bA$ is a nilpotent ideal. 
 We assume,
moreover, $\cN$ to be free and finite-dimensional over $\K$. We equip $\bA$ with
 the product topology on $\cN \cong \K^n$ with respect to
some (and hence any) $\K$-basis. 
\end{definition}

As is easily seen (Lemma \ref{WeilalgebraLemma}), $\bA$ is then again of the same kind as $\K$,
hence it is selectable as a new base ring. Since an interesting Weil algebra $\bA$ is never a field,
this explains why we work with base {\em rings}, instead of fields.
Our main results may now be summarized as follows
(see Theorems 
\ref{WeilFunctorTheoremExistence} and \ref{WeilFunctorTheoremBundle}
for details):

\begin{theorem}\label{SummaryTheorem}
Assume $\bA= \K \oplus \cN$ is a Weil $\K$-algebra.
Then, to any smooth $\K$-manifold $M$, one can associate a smooth  manifold $T^\bA M$ such that:
\begin{enumerate}
\item
the construction is functorial and compatible with cartesian products,
\item
$T^\bA M$ is a smooth manifold over $\bA$ (hence also over $\K$), and for any $\K$-smooth map
$f:M \to N$, the corresponding map $T^\bA f:T^\bA M \to T^\bA N$ is smooth over $\bA$,
\item
the manifold $T^\bA M$ is a bundle over the base $M$, and the bundle chart changes in $M$
are polynomial in fibers
(we call this a {\em polynomial bundle}, cf.\ Definition  \ref{ManifoldDefinition}),
\item
if $M$ is an open submanifold $U$ of a topological $\K$-module $V$, then $T^\bA U$ can be 
identified with  the inverse image
of $U$ under the canonical map $V_\bA \to V$, where $V_\bA = V \otimes_\K \bA$ is the usual
scalar extension of $V$; if, in this context, $f:U \to W$ is a {\em polynomial} map, then
$T^\bA f$ coincides with the algebraic scalar extension $f_\bA : V_\bA \to W_\bA$ of $f$.
\end{enumerate}
The {\em Weil functor} $T^\bA$ is uniquely determined by these properties.
\end{theorem}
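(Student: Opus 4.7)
The plan is to follow the standard scheme for constructing a geometric functor: define $T^\bA$ first on open submanifolds of topological $\K$-modules, verify functoriality locally, and then globalize by gluing along chart changes. Property (4) essentially forces the local definition: for an open $U \subset V$ one must set $T^\bA U := \pi^{-1}(U)$, where $\pi : V_\bA = V \otimes_\K \bA \to V$ is induced by the algebra projection $\bA = \K \oplus \cN \to \K$. This is manifestly an open $\bA$-submodule of $V_\bA$ and hence a smooth $\bA$-manifold; the fiber over $x \in U$ is the $\K$-module $V \otimes_\K \cN$.

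The central issue is to extend $T^\bA$ to \emph{all} smooth maps $f : U \to W$, not only polynomial ones. For polynomial $f$, property (4) dictates $T^\bA f = f_\bA$, the algebraic scalar extension. For general smooth $f$, the idea is to use the Taylor-expansion machinery of the BGN-calculus from \cite{BGN04, Be08, Be10}: because $\cN$ is a finitely generated nilpotent ideal there exists $k \in \N$ with $\cN^{k+1} = 0$, so only a truncated Taylor polynomial of $f$ of order $\le k$ is needed. Since we work in arbitrary characteristic one cannot divide by $j!$, and the correct substitute is the hierarchy of higher-order difference quotients $f^{[j]}$ developed in those references. Concretely, $T^\bA f(x+\nu)$ for $x \in U$ and $\nu \in V \otimes_\K \cN$ is assembled from the values of $f^{[j]}$ at $(x;\nu;\ldots;\nu)$, combined via the multiplicative structure of $\bA$. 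One must then prove that this definition is independent of the way $f$ is encoded, is $\bA$-smooth, and is compatible with composition.

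Once the local construction is in place, functoriality $T^\bA(g \circ f) = T^\bA g \circ T^\bA f$ reduces to a higher-order chain rule (a Fa\`a~di~Bruno-type identity) for the difference quotients, and compatibility with cartesian products is immediate from $(V\times W) \otimes_\K \bA \cong V_\bA \times W_\bA$. For a general $\K$-manifold $M$ with atlas $(\varphi_i)$ one applies $T^\bA$ to the transition maps $\varphi_{ij}$ to obtain smooth $\bA$-valued cocycles, from which $T^\bA M$ is glued together. The bundle claim (3) follows by inspection: each $T^\bA \varphi_{ij}$ is, fiberwise over $M$, given by the truncated Taylor polynomial in the fiber coordinate $\nu \in V \otimes_\K \cN$, hence polynomial in the fiber variable. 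Uniqueness follows because any functor satisfying (1)--(4) is pinned down on polynomial local models by (4), and the truncated Taylor polynomial of a smooth map is, modulo $\cN^{k+1} = 0$, forced to coincide with its unique polynomial approximation of order $k$, leaving no freedom.

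The main obstacle is the technical extension step: showing that the $f^{[j]}$-formula defining $T^\bA f$ produces a well-defined, $\bA$-smooth map, independent of chart representations and compatible with composition and with the full multiplicative structure of $\bA$. This hinges on the higher-order difference-quotient calculus of \cite{BGN04, Be08}, which is exactly what replaces classical jets in positive characteristic; once it is in place, the gluing, the polynomial-bundle property, and uniqueness become essentially formal. The deepest point is verifying that the resulting construction is intrinsically attached to $f$ and lifts the $\K$-algebra structure of $\bA$ to the operation on $T^\bA M$, rather than being a chart-dependent auxiliary recipe.
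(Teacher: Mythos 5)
Your overall architecture coincides with the paper's: define $T^\bA U$ as the preimage of $U$ under $V_\bA\to V$, define $T^\bA f$ fiberwise by a truncated Taylor expansion of order $k$ (where $\cN^{k+1}=0$), check functoriality, and glue over an atlas. But the two steps you describe as routine are the actual substance of the proof, and as written they contain genuine gaps. First, the fiberwise formula: you propose to obtain $T^\bA f(x+\nu)$ ``from the values of $f^{[j]}$ at $(x;\nu;\ldots;\nu)$''. The map $f^{[j]}$ is only defined on $U^{[j]}\subset V^{2^j}\times\K^{2^j-1}$, with space arguments in $V$ and time arguments in $\K$; one cannot substitute $\nu\in V\otimes_\K\cN$ into it. What legitimizes the substitution is that the coefficients of the radial expansion, $v\mapsto a_i(x,v)=f^{\la i\ra}(x,v,\ooo;\ooo)$, are continuous \emph{polynomial} maps in $v$ (Corollary \ref{DiffSimpPolynomial}, Theorem \ref{NormalizedDifferentialPolynomial}); only then does the \emph{algebraic} scalar extension $(\Tay^k_xf)_\cN:V_\cN\to W_\cN$ exist, and its continuity in infinite dimension is itself nontrivial (Theorem \ref{PolynomialScalarExtensionSmooth}). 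This polynomiality, obtained by separating homogeneous parts of a continuous map in arbitrary characteristic, is the missing idea; without it, ``combined via the multiplicative structure of $\bA$'' has no content. Relatedly, the paper does not prove a Fa\`a di Bruno identity for the $f^{[j]}$: it deduces the truncated chain rule for Taylor polynomials (Theorem \ref{TaylorTheorem}) from functoriality of the simplicial jet $\SJ^k$ together with the identification of $\SJ^k_xf$ with the scalar extension of $\Tay^k_xf$ to $\SJ^k_0\K$ --- a far cheaper route than the combinatorics your plan would require.

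Second, uniqueness. Property (4) pins down $T^\bA f$ only for \emph{polynomial} $f$, and your appeal to the uniqueness of the order-$k$ polynomial approximation does not explain why an arbitrary $\bA$-smooth map $F$ with $F\circ\sigma_U=\sigma_W\circ f$ must be polynomial in the fiber in the first place. The argument actually needed (Theorems \ref{SimplicialExtensionTheorem} and \ref{WeilFunctorTheoremExistence}) is that any $\bA$-smooth $F$ admits a radial expansion in the nilpotent basis elements $a_1,\dots,a_n$ with \emph{no remainder term}, by nilpotency, and that the coefficients $\D^\Alpha_{\vvv}F(x)$ are limits of difference quotients evaluated only at points of the base $\sigma_U(U)$; hence $F$ is determined by its restriction to the base. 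This lemma --- that $\bA$-smooth maps are determined by their values on the base --- is what makes the uniqueness claim work, and it must be supplied explicitly rather than inferred from the polynomial case.
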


The {\em Weil bundles} $T^\bA M$ are far-reaching generalizations of the tangent bundle $TM$,
which arises in the special case of the ``dual numbers over $\K$'', $\bA = T \K = \K\oplus \eps \K$ ($\eps^2 = 0$). 
The theorem shows that the structure of the Weil bundle $T^\bA M$ is encoded in the ring 
structure of
$\bA$ in a much stronger form than in the ``classical'' theory (as developed, e.g., in \cite{KMS}):
 the manifold $T^\bA M$ plays in all respects the
r\^ole of a {\em scalar extension} of $M$,  and hence $T^\bA$ can be interpreted
as a {\em functor of scalar extension}, and we could write
$M_\bA := T^\bA M$ -- an  interpretation that is certainly 
 very common for mathematicians used to algebraic geometry, but rather 
  unusual for someone used to classical differential geometry; in this respect, our results
are certainly closer to the original ideas of Andr\'e Weil (\cite{Weil}) than much of the existing literature.  
In subsequent work we will exploit this link between the ``algebraic'' and the ``geometric'' viewpoint to
investigate features of differential geometry, most notably, bundles, connections, and notions of curvature. 

\msk
The algebraic point of view naturally leads to emphasize in differential geometry certain aspects  well-known  from the algebraic theory.
First, Weil algebras and -bundles form a sort of ``$K$-theory'' with respect to the operations

\begin{enumerate}
\item tensor product: $\bA \otimes_\K \bB \cong  \K \oplus (\cN_\bA \oplus \cN_\bB \oplus
\cN_\bA \otimes \cN_\bB)$,
\item Whitney sum: 
$\bA \oplus_\K \bB := (\bA \otimes_\K \bB) / (\cN_\bA \otimes_\K \cN_\B) \cong
\K \oplus \cN_\bA \oplus  \cN_\B$.
\end{enumerate}

\nin
Whereas (2) corresponds exactly to the Whitney sum of the corresponding bundles 
over $M$, one has to be a little bit careful with the bundle interpretation of (1)
(Theorem \ref{KTheoryTheorem}):
Weil bundles are in general {\em not} vector bundles, hence there is no plain notion of
``tensor product''; in fact, (1) rather corresponds to {\em composition of Weil functors}:
$$
T^{\bA \otimes \bB} M \cong  T^{\bB} (T^{\bA} M) .
$$
Second, following the model of Galois theory, for understanding the structure of
the Weil bundle $T^\bA M$ over $M$, it is important to study 
the {\em automorphism group} $\Aut_\K(\bA)$. Indeed, by functoriality, 
any automorphism $\Phi$ of $\bA$ induces canonically
a diffeomorphism $\Phi_M : T^\bA M \to T^\bA M$ which commutes with all $\bA$-tangent maps
$T^\bA f$, for any $f$ belonging to  the $\K$-diffeomorphism group of $M$, $\Diff_\K(M)$.
Thus we have two commuting group actions on $T^\bA M$: one of $\Aut_\K(\bA)$ and 
the other of $\Diff_\K(M)$. As often in group theory, we get a better understanding of one group
action by knowing another group action commuting with it. 
Here, an important special case is the one of a {\em graded Weil algebra} (Chapter 
\ref{sec:Endom}; in \cite{KM04} the term
{\em homogeneous Weil algebra} is used):  in this case, there exist ``one-parameter
subgroups'' of automorphisms, and the Weil algebra carries as new structure  a
``composition like product'', turning it into a (in general
non-commutative) near-ring, similar to the near-ring of
formal power series (Theorem \ref{GradedTheorem}). 

\msk
In \cite{Be08} and \cite{Be10}, two prominent cases of Weil functors have been studied, and
the present work generalizes these results: 
the {\em higher order tangent functors} $T^k$, corresponding to the 
{\em iterated tangent rings},   $T^{k+1}\K := T(T^k\K)$,  
and the {\em jet functors} $\SJ^k$, corresponding to the {\em (holonomic) 
jet rings} $\SJ^k \K := \K[X]/(X^{k+1})$.
Since both cases play a key r\^ole for the proof of our general result, we recall (and
slightly extend) the results for these cases (see Appendices A and B). 
In particular, we describe in some detail the {\em canonical $\K^\times$-action},
which appears  already in the framework of difference calculus, and which gives
rise to the natural grading of these Weil algebras.

\msk
The core part for the proof of Theorem \ref{SummaryTheorem} 
 is a careful investigation of the 
relation between two foundational concepts, namely those of {\em jet}, and of {\em Taylor expansion}.
As is well-known in the classical setting (see, e.g., \cite{Re83}), 
both concepts are essentially equivalent, but the jet-concept
is of an ``invariant'' or ``geometric'' nature, hence makes sense independently of a chart,
whereas Taylor expansions can be written only with respect to  a chart, hence are not
of ``invariant'' nature. This is reflected by a slight difference in their behaviour with respect to
composition of maps:
for jets we have the ``plainly functorial'' composition rule 
\begin{equation}\label{JetRule}
\SJ^k_x (g \circ f) =\SJ^k_{f(x)}g \circ \SJ^k_x f \, ,
\end{equation}
whereas for Taylor polynomials (which we take here without constant term) we 
have truncated composition of polynomials:
\begin{equation}\label{TaylorRule}
\Tay^k_x (g \circ f) =\bigl( \Tay^k_{f(x)}g \circ \Tay^k_x f\bigr) \mod(\deg > k) \, .
\end{equation}
This lack of functoriality is compensated by the advantage that, like every polynomial, the
Taylor polynomial always admits
algebraic scalar extensions, so that the $k$-th order Taylor polynomial
$P = \Tay_x^k f:V \to W$ of a $\K$-smooth function $f:V\supset U \to W$ at $x\in U$ extends  naturally to
a polynomial $P_\bA:V \otimes_\K \bA \to W \otimes_\K \bA$.
Our general construction of Weil functors combines both extension procedures:
in a first step, based on differential calculus reviewed in Appendices A and B,  we construct  the jet functor $\SJ^k$,
which associates to each
smooth map $f$  its (``simplicial'') $k$-jet $\SJ^k f$. 
Using this invariant object, we
 {\em define} in a second step the Taylor polynomial $\Tay_x^k f$ at $x$ by a 
``chart dependent'' construction (Section \ref{Section:Taylor}), and then we  prove the 
transformation rule (\ref{TaylorRule}) (Theorem \ref{TaylorTheorem}). 
Note that, in the classical case, our Taylor polynomial $\Tay_x^k f$ coincides of course
with the usual one, but we do not define it in the usual way in terms of higher
order differentials (since we then would have to divide
by $j !$, which is impossible in arbitrary characteristic). 
In a third step, we consider the algebraic scalar extension of the Taylor
polynomial from $\K$ to $\cN$:
if the degree $k$ is higher than the order of nilpotency of $\cN$, then
\begin{equation}
T^\bA_x f := (\Tay_x^k f)_{\cN} \, :V_\cN:=V\otimes\cN\to W_\cN
\end{equation}
satisfies a plainly functorial transformation rule, so that finally the Weil functor $T^\bA$
can be defined by $T^\bA f: U\times V_\cN \to W\times W_\cN,$ $(x,y)\mapsto\bigl( f(x) , T^\bA_x f (y) \bigr)$.
This strategy requires to develop some general tools on continuity and smoothness of polynomials,
which we relegate to Appendix C.

\msk From the point of view of analysis, the interplay between jets and Taylor polynomials
 is reflected by the interplay between {\em (generalized) difference quotient maps} and
 (various)  {\em remainder conditions} for  the remainder term in 
a ``limited expansion'' of a function $f$ around a point $x$.
The first aspect is functorial and well-behaved in arbitrary dimension, hence is suitable for
defining our general differential calculus;
it implies certain {\em radial limited expansions}, together with their
{\em multivariable versions}, which represent the second aspect, and which 
are the starting point for our definition of Taylor polynomials (Chapter 2).

\medskip
This work is organized in four main chapters and three appendices, as follows: 

\ssk
2. Taylor polynomials, and their relation with jets

3. Construction of Weil functors 

4. Weil functors as bundle functors on manifolds 

5. Canonical automorphisms, and graded Weil algebras 

Appendix A: Difference quotient maps and $\K^\times$-action

Appendix B: Differential calculi

Appendix C: Continuous polynomial maps, and scalar extensions

\msk
The results of the present work will allow a more conceptual and more
general approach to most of the topics treated in \cite{Be08}; this will be
investigated in subsequent work.

\bigskip
{\bf Notation.}
As already mentioned, $\K$ is  a commutative unital topological base ring, with dense unit group $\K^\times$, and
$\bA$ is a Weil $\K$-algebra. 
Concerning ``cubic'' and ``simplicial'' differential calculus, specific notation
is explained in Appendices A and B. 
In particular, superscripts of the form $[\cdot]$  refer to ``cubic'', and superscripts
of the form  $\la \cdot\ra$ refer to ``simplicial'' differential calculus. 
In general, the ``cubic'' notions are stronger than the ``simplicial'' ones
(``cubic implies simplicial'', see Theorem \ref{CubictosimplicialTheorem}).
As a rule, boldface variables are used for $k$-tuples or $n$-tuples, such as, e.g.,
$\vvv=(v_1,\dots,v_k)$, $\ooo = (0,\dots,0)$.

\section{Taylor polynomials, and their relation with jets}

\subsection{Limited expansions} 
Let $V,W$ be topological $\K$-modules and $f:U \to W$ a map defined on an open 
set $U \subset V$. Notation concerning {\em extended domains}, like $U^{[1]}$ and
$U^{\la k \ra}$, is explained in Appendix A. 

\begin{definition}
We say that $f$ admits  {\em radial limited expansions} if there exist
continuous maps $a_i:U \times V \to W$ and $R_k:U^{[1]} \to W$ such that, for $(x,v,t) \in U^{[1]}$,
$$
f(x+tv)=f(x)+\sum_{i=1}^kt^i a_i(x,v)+t^kR_k(x,v,t)
$$ 
and the
{\em remainder condition} $R_k(x,v,0)=0$ hold.
We say that $f$ admits  {\em multi-variable radial limited expansions} if
there exist continuous maps $b_i:U \times V^{i} \to W$ and $S_k: U^{\la k \ra} \to W$, such that,
for $\vvv = (v_1,\ldots,v_k) \in V^k$ with $x + \sum_{i=1}^k v_i \in U$,
$$
f(x+ t v_1 + t^2 v_2 + \ldots + t^k v_k) = 
f(x) + \sum_{i=1}^k t^i  b_i(x,v_1,\ldots,v_i) +t^k S_k(x,\vvv;0,\ldots,0,t)
$$
and  the {\em remainder condition $S_k(x,\vvv;0,\ldots,0,0)=0$} hold.
\end{definition}

Taking $0=v_2=v_3=\ldots=v_k$, the multi-variable radial condition  
implies the radial condition. For the next result,
cf.\ Appendix B for definition of the {\em class $C^{\la k \ra}$}.

\begin{theorem}[Existence and uniqueness of limited expansions]
\label{UniquenessTaylorExpansion}\label{LimitedCorollary}
Assume $f:U\to W$ is of class $\CC^{\la k\ra}$.
Then $f$ admits  limited expansions of both types from the preceding definition, 
and such expansions are unique, given by
$$
b_i(x,v_1,\ldots,v_i) = f^{\la i \ra}(x,v_1,\ldots,v_i; \ooo), 
$$
$$
a_i(x,h)= f^{\la i \ra}(x,h,\ooo;\ooo).
$$
\end{theorem}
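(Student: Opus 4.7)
I would prove uniqueness first---which by a density argument already forces the only possible values of the coefficients---and then obtain existence together with the stated identification $b_i=f^{\la i\ra}(x,v_1,\ldots,v_i;\ooo)$ by unfolding the defining recursion of the simplicial difference quotients recalled in Appendix~B.

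\textbf{Uniqueness by descent.} Assume $f$ admits a multi-variable radial expansion with continuous coefficients $b_i$ and remainder $S_k$ vanishing at the zero-parameter locus. For $(x,\vvv,\ooo,t)\in U^{\la k\ra}$ with $t\in\K^\times$, divide the expansion by $t$; the resulting right-hand side depends continuously on all parameters and extends continuously to $t=0$, so density of $\K^\times$ in $\K$ forces
\[
b_1(x,v_1)=\lim_{t\to 0,\ t\in\K^\times}\frac{f\bigl(x+tv_1+t^2v_2+\cdots+t^kv_k\bigr)-f(x)}{t}.
\]
Since $b_1$ cannot depend on $v_2,\ldots,v_k$, one may set them to zero, recognising the right-hand side as the continuous extension at $t=0$ of the first simplicial difference quotient, i.e.\ $f^{\la 1\ra}(x,v_1;0)$. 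Subtracting this term, dividing by $t$ once more, and iterating determines each $b_i$ uniquely and identifies it with $f^{\la i\ra}(x,v_1,\ldots,v_i;\ooo)$. The radial version is the specialisation $v_2=\cdots=v_k=0$.

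\textbf{Existence.} Running the uniqueness computation in reverse, I would \emph{define} $b_i(x,v_1,\ldots,v_i):=f^{\la i\ra}(x,v_1,\ldots,v_i;\ooo)$ and set
\[
S_k(x,\vvv;0,\ldots,0,t):=\frac{1}{t^k}\Biggl(f\Bigl(x+\sum_{i=1}^k t^iv_i\Bigr)-f(x)-\sum_{i=1}^k t^i\,b_i(x,v_1,\ldots,v_i)\Biggr)
\]
on the invertible locus. The class $\CC^{\la k\ra}$ hypothesis provides continuity of every $f^{\la j\ra}$ on its full extended domain, and the factorisation identities defining $f^{\la 1\ra},\ldots,f^{\la k\ra}$ express the bracketed quantity as a continuous function on $U^{\la k\ra}$ whose contributions of order $\le k-1$ in $t$ cancel against the $t^i b_i$'s. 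Dividing by $t^k$ then yields a continuous function on the invertible locus that extends continuously to $t=0$ with vanishing boundary value, i.e.\ a continuous $S_k$ on $U^{\la k\ra}$ with $S_k(x,\vvv;\ooo)=0$. Density of $\K^\times$ propagates the expansion from the invertible locus to the whole extended domain, and the radial version is again the specialisation $v_2=\cdots=v_k=0$.

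\textbf{Main obstacle.} The delicate step is the combinatorial bookkeeping: matching the value of $f^{\la i\ra}$ at the ``monomial-curve'' parameter choice appropriate to $x+\sum t^j v_j$ with the $i$-th coefficient of the Taylor-like expansion, and in particular identifying the limit at $t=0$ with $f^{\la i\ra}(x,v_1,\ldots,v_i;\ooo)$. This requires a careful inductive unpacking of the recursive construction of the simplicial difference quotients and their normalisation conventions in Appendix~B; once this is in place, continuity of the $f^{\la j\ra}$ together with the density-of-$\K^\times$ principle deliver the remainder condition and uniqueness with no further analytic input.
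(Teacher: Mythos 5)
Your proposal is correct and follows essentially the same route as the paper: uniqueness by the successive divide-by-$t$ and density-of-$\K^\times$ descent (this is exactly the argument of \cite{BGN04}, Lemma 5.2, which the paper cites), and existence by specializing the simplicial limited expansions of Theorem \ref{LimitedTheorem}. The ``main obstacle'' you flag dissolves in one line: setting $s_0=\cdots=s_{k-1}=0$ and $s_k=t$ in that theorem collapses every product $\prod_{\ell=0}^{j-1}(s_k-s_\ell)$ to $t^j$, immediately yielding $b_j=f^{\la j\ra}(x,v_1,\ldots,v_j;\ooo)$ and the remainder $S_k=f^{\la k\ra}(x,\vvv;0,\ldots,0,t)-f^{\la k\ra}(x,\vvv;\ooo)$, which is continuous on all of $U^{\la k\ra}$ and vanishes at $t=0$ with no further division argument needed.
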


\begin{proof}
Existence follows from Theorem
 \ref{LimitedTheorem}, by letting there
 $s_0 = s_1 = \ldots = s_{k-1} = 0$ and $s_k = t$. 
Uniqueness is proved as in  \cite{BGN04}, Lemma 5.2.
\end{proof}

Recall from Appendix B the definition of the class $C^{[k]}$, 
and the fact that
$C^{[k]}$ implies $C^{\la k \ra}$ (Theorem \ref{CubictosimplicialTheorem}).

\begin{corollary}\label{DiffSimpPolynomial}
Assume $f$ is of class $C^{[k]}$.
Then, for  $i=1,\ldots,k$, the {\em normalized differential}
$$
a_i(x,\cdot):V\to W, h \mapsto D^i_hf(x) :=a_i(x,h)
$$ 
is continuous and polynomial (in the sense of 
 Definition \ref{PolynomialDefinition}), homogeneous of degree $i$, hence smooth.
\end{corollary}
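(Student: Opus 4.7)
The plan is to combine the identification $a_i(x,h)=f^{\langle i\rangle}(x,h,\ooo;\ooo)$ from Theorem~\ref{UniquenessTaylorExpansion} with the additional rigidity provided by the cubic $C^{[k]}$-hypothesis.

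I would first dispose of the two easier claims. Continuity of $a_i(x,\cdot):V\to W$ follows directly from Theorem~\ref{UniquenessTaylorExpansion} combined with Theorem~\ref{CubictosimplicialTheorem}: $f\in C^{[k]}$ implies $f\in C^{\langle k\rangle}$, so $f^{\langle i\rangle}$ is continuous on its extended domain, and hence so is its restriction to the slice $(x,h,\ooo;\ooo)$. Homogeneity of degree $i$ can be obtained by a uniqueness argument: for $s\in\K$ and $h\in V$, comparing the two radial expansions of $f(x+t(sh))=f(x+(ts)h)$ as functions of $t$ and invoking uniqueness of such expansions (which reduces, via density of $\K^\times$ in $\K$, to successive division by $t$ and passage to the limit $t\to 0$) yields $a_i(x,sh)=s^i a_i(x,h)$ for every $s\in\K$.

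The main obstacle is polynomiality, since a continuous homogeneous map need not be polynomial in general. Here the cubic hypothesis, strictly stronger than the simplicial one, is essential. My strategy is to pass through the multivariable expansion of Theorem~\ref{UniquenessTaylorExpansion}, observing that $a_i(x,h)=b_i(x,h,\ooo,\ldots,\ooo)$ with $b_i(x,v_1,\ldots,v_i)=f^{\langle i\rangle}(x,v_1,\ldots,v_i;\ooo)$. Under $C^{[k]}$, the iterated cubic difference quotients of $f$ allow one to express $b_i$ as a characteristic-free polynomial in the increments $(v_1,\ldots,v_i)$, weighted-homogeneous of total degree $i$ (weight $j$ assigned to $v_j$); specializing $v_2=\cdots=v_i=\ooo$ extracts the pure ``$v_1^i$''-component, yielding $a_i(x,\cdot)$ as a polynomial of degree at most $i$ in $h$, which together with the homogeneity already established pins down the degree as exactly $i$. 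The subtlety --- and the reason this is the main obstacle --- is that in arbitrary characteristic the classical identification $a_i(x,h)=\tfrac{1}{i!}D^if(x)(h,\ldots,h)$ fails (as is already visible for $f(x)=x^2$ in characteristic~$2$, where $D^2f=0$ while $a_2(x,v)=v^2$), so the polynomial structure of $a_i$ must be recovered from the cubic difference-quotient data directly, not from symmetric multilinear differentials alone.

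Smoothness of $a_i(x,\cdot)$ is then automatic, since continuous polynomial maps between topological $\K$-modules are smooth, as established in Appendix C (cf.\ Definition~\ref{PolynomialDefinition}).
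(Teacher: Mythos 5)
The continuity and homogeneity parts of your argument are sound: continuity of $a_i(x,\cdot)$ as the restriction of the continuous simplicial slope $f^{\la i\ra}$ to the slice $(x,h,\ooo;\ooo)$, and the rescaling-plus-uniqueness argument giving $a_i(x,sh)=s^i a_i(x,h)$ for all $s\in\K$ (the rescaled remainder $s^kR_k(x,h,ts)$ still satisfies the remainder condition, so uniqueness applies). The final smoothness conclusion via Theorem \ref{PolContinousSmooth} is also exactly what the paper invokes for that part.

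The gap is in the polynomiality step, which you correctly identify as the main obstacle but then do not actually prove. The sentence ``the iterated cubic difference quotients of $f$ allow one to express $b_i$ as a characteristic-free polynomial in the increments'' is precisely the assertion requiring proof; you give no mechanism for extracting a polynomial structure in the sense of Definition \ref{PolynomialDefinition} (existence of coefficients $a_\alpha$ relative to a generating system) from the $C^{[k]}$ hypothesis, and continuity plus homogeneity do not suffice, as you yourself note. Moreover, your route runs backwards relative to the paper's logic: polynomiality of $\vvv\mapsto f^{\la j\ra}(x,\vvv;\ooo)$ (your $b_j$) is the content of Theorem \ref{NormalizedDifferentialPolynomial}, whose proof \emph{uses} Corollary \ref{DiffSimpPolynomial} as input (via Lemma \ref{NormalizedDifferentialSchwarzLemma} and composition of the normalized differentials $\D^{\alpha_i}_{v_i}$), so deducing the corollary from the polynomiality of $b_i$ would be circular within this development. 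The paper itself does not reprove the statement but cites \cite{Be10}, Cor.\ 1.8, where polynomiality of $a_i(x,\cdot)$ is established directly; a self-contained argument would have to proceed in that spirit (e.g.\ expanding along a generating system and separating multi-homogeneous components of a continuous map as in Theorem \ref{ContinuousDecomposition}). As written, the central claim of the corollary is asserted rather than proved.
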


\begin{proof}
See  \cite{Be10}, Cor.\ 1.8. The last statement follows from Theorem \ref{PolContinousSmooth}.
\end{proof}

\subsection{Taylor polynomials}\label{Section:Taylor}
We can now define Taylor polynomials, which are the core of the construction of Weil functors.

\begin{definition}
Let $f:U \to W$ be of class $\CC^{[k]}$ and fix $x \in U$.
The polynomial
$$
\Tay_x^{k} f : V \to W , \quad h \mapsto \sum_{i=1}^k \D^i_h f(x) = \sum_{i=1}^k a_i(x,h) 
$$ 
is called the 
{\em $k$-th order Taylor polynomial of $f$ at the point $x$}.
\end{definition}

\begin{theorem}\label{TaylorPolyTheorem}
If $f:U \to W$ is of class $\CC^{[\ell]}$, then for all $k\leq \ell$,
$\Tay_x^{k} f$
is a smooth polynomial map of degree at most $k$,
without constant term.  
If, moreover, $f$ is itself a polynomial map of degree at most $k$, then
$\Tay_0^k f$ coincides with $f$, up to the constant term:
$$
f= f(0) + \Tay_0^k f \,  ,
$$
and its homogeneous part $f_i$ of $f$ of degree $i$ is equal to $a_i(x,\cdot)=
f^{\la i\ra}(x,\cdot,\ooo;\ooo)$.
\end{theorem}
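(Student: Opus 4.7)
I would split the statement into its two assertions and handle them in turn. For the first, Corollary \ref{DiffSimpPolynomial} tells us that each normalized differential $a_i(x,\cdot)\colon V\to W$ is continuous and polynomial, homogeneous of degree $i$, and smooth. Since by definition
\[
\Tay_x^k f \;=\; \sum_{i=1}^k a_i(x,\cdot),
\]
this displays $\Tay_x^k f$ as a finite sum of homogeneous polynomial maps of degrees $1, 2, \ldots, k$, hence it is itself polynomial of degree at most $k$, without constant term, and smooth (smoothness being preserved by finite sums). This part is essentially bookkeeping.

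For the second assertion, suppose $f$ itself is polynomial of degree at most $k$, and write $f = f_0 + f_1 + \cdots + f_k$ as the sum of its homogeneous components (so $f_0 = f(0)$). The strategy is to produce two distinct radial limited expansions of $f$ at the origin and then invoke the uniqueness part of Theorem \ref{UniquenessTaylorExpansion}. On the one hand, the theorem applied to $f$ at $x=0$ yields
\[
f(th) \;=\; f(0) + \sum_{i=1}^k t^i\, a_i(0,h) + t^k R_k(0,h,t), \qquad R_k(0,h,0)=0.
\]
On the other hand, the very fact that $f$ is polynomial of degree $\leq k$ gives directly
\[
f(th) \;=\; f_0 + \sum_{i=1}^k t^i\, f_i(h),
\]
which is already a valid radial limited expansion with the trivial remainder $R_k \equiv 0$ (continuous and vanishing at $t=0$). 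Uniqueness forces $a_i(0,h) = f_i(h)$ for each $i=1,\ldots,k$, so $\Tay_0^k f = \sum_{i=1}^k f_i = f - f(0)$. The identification $a_i(x,\cdot) = f^{\la i\ra}(x,\cdot,\ooo;\ooo)$ is the explicit formula already recorded in Theorem \ref{UniquenessTaylorExpansion}, here specialized to $x=0$.

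I do not anticipate a real obstacle; the proof merely assembles pieces that are already in place. The only step worth pausing on is the check that the direct expansion $f(th) = f_0 + \sum_{i=1}^k t^i f_i(h)$ really satisfies the definition of a radial limited expansion—i.e.\ that taking $R_k \equiv 0$ is admissible, which requires the $f_i$ to be continuous in $h$ and the remainder to vanish at $t=0$. Both are automatic for polynomial maps (using the continuity/smoothness of polynomial maps from Appendix C), after which the uniqueness statement of Theorem \ref{UniquenessTaylorExpansion} does all the remaining work.
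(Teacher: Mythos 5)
Your proof is correct and follows essentially the same route as the paper: the first assertion is read off from Corollary \ref{DiffSimpPolynomial}, and the second is obtained by decomposing $f$ into homogeneous parts and invoking the uniqueness of the radial limited expansion (Theorem \ref{UniquenessTaylorExpansion}), the paper merely delegating the explicit computation for a homogeneous piece to \cite{Be08}, Cor.~1.12. Your extra check that the trivial expansion $f(th)=f_0+\sum_i t^i f_i(h)$ is admissible (continuity of the $f_i$ via separation of homogeneous parts, Theorem \ref{ContinuousDecomposition}) is exactly the point that needs, and gets, justification.
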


\begin{proof} The fact that $\Tay_x^k f$ is smooth and  polynomial follows from Corollary
 \ref{DiffSimpPolynomial}.
 
 Next, assume that $f$ is a homogeneous
polynomial, say, of degree $j$ with $j \leq k$. Uniqueness of the radial Taylor expansion
(Theorem \ref{UniquenessTaylorExpansion}) implies that,
 for every homogeneous map of degree $j$ and of class $\CC^{[k]}$, we have
 $f(h)= f^{\la j \ra}(x,h,\ooo;\ooo)$ (see \cite{Be08}, Cor.\ 1.12 for the proof in case $j=2$,
 which generalizes without changes),
 whence the claim.
\end{proof}

The radial limited expansion can
now be written as
\begin{equation} \label{RadialEquation}
f(x+th)=f(x)+ \Tay_x^{ k}f(th) + t^k R_k(x,h,t) \, .
\end{equation}
If the integers are invertible in $\K$, then, by uniqueness of the expansion, it coincides 
with the usual Taylor expansion:
$\D^j_vf(x) = \frac{1}{j!} d^j f(x)(v,\ldots,v)$,
see \cite{BGN04}.

\subsection{Normalized differential and polynomiality}

\begin{definition}\label{NormalizedDifferentialDef} 
Let $f:U \to W$ be of class $\CC^{[k]}$. Recall from above the definition of
 the {\em normalized differential}
$\D^i_vf(x):=f^{\la i \ra}(x,v,\ooo;\ooo)$.
We define,
for all multi-indices $\Alpha\in\N^k$ such that $|\Alpha|:=\sum_i\Alpha_i\leq k$,
and for all $\vvv\in V^k$, $x \in U$, the {\em normalized polynomial differential}
(which is well-defined, by Corollary \ref{SimplicialDiffSmooth})
 $$
 \D^\Alpha_{\vvv}f(x):=\left( \D^{\alpha_k}_{v_k}\circ\ldots\circ \D^{\alpha_1}_{v_1} \right)f(x).
 $$
\end{definition}

The following is a generalization of Schwarz's Lemma (Th.\ \ref{SchwarzLemma} (iv)):

\begin{lemma}\label{NormalizedDifferentialSchwarzLemma}
Let $f:U\subset V\to W$ a a map of class $\CC^{[k]}$.
Then, for all multi-indices $\Alpha\in\N^k$ such that $|\Alpha|:=\sum_i\Alpha_i\leq k$,
and for all $\vvv\in V^k$,
 the map $\D^\Alpha_{\vvv}f(x)$ does not depend on the order
 in wich we compose the normalized differentials $\D^{\alpha_i}_{v_i}$.
\end{lemma}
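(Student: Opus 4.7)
The plan is to reduce the general statement to the commutation of two consecutive normalized differentials, and then to establish that pairwise commutation via uniqueness of a bivariate Taylor-type expansion.

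First I would reduce to adjacent transpositions: since the symmetric group on $k$ letters is generated by adjacent transpositions, it suffices to show that whenever $g$ is a partial iterate of the form $\D^{\alpha_{j-1}}_{v_{j-1}} \circ \cdots \circ \D^{\alpha_1}_{v_1} f$ (to which further normalized differentials can still be applied, by Corollary \ref{SimplicialDiffSmooth}), and for any $\alpha, \beta \in \N$ with $\alpha+\beta$ in the appropriate range and any $v, w \in V$,
$$
\D^\beta_w \D^\alpha_v g(x) \;=\; \D^\alpha_v \D^\beta_w g(x).
$$

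To prove this two-operator commutation I would introduce the auxiliary bivariate function $h(\sigma,\tau) := g(x + \sigma v + \tau w)$ on a neighborhood of $(0,0) \in \K^2$. Applying the one-variable radial limited expansion (Theorem \ref{UniquenessTaylorExpansion}) first to $\sigma \mapsto h(\sigma,\tau) = g\bigl((x+\tau w) + \sigma v\bigr)$ identifies the coefficient of $\sigma^i$ as $\D^i_v g(x + \tau w)$; expanding this coefficient in $\tau$ near $0$ then produces a bivariate polynomial approximation of $h$ whose coefficient of $\sigma^\alpha\tau^\beta$ is precisely $\D^\beta_w \D^\alpha_v g(x)$. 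Expanding in the opposite order (first in $\tau$, then in $\sigma$) yields a second polynomial approximation of $h$, whose coefficient of $\sigma^\alpha\tau^\beta$ equals $\D^\alpha_v \D^\beta_w g(x)$.

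The remaining task is to show that these two bivariate polynomials coincide. For this I would substitute $\sigma = \lambda s$, $\tau = \mu s$ with arbitrary $\lambda, \mu \in \K$: the one-variable function $s \mapsto h(\lambda s, \mu s) = g(x + s(\lambda v + \mu w))$ admits a radial expansion whose coefficient of $s^n$ is, by definition, $\D^n_{\lambda v + \mu w} g(x)$. By uniqueness of the one-variable radial expansion (Theorem \ref{UniquenessTaylorExpansion}), both bivariate polynomials must satisfy $\sum_{i+j=n} c_{ij}\lambda^i\mu^j = \D^n_{\lambda v + \mu w} g(x)$, where $c_{ij}$ stands for either set of coefficients. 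Since this holds for every $\lambda, \mu$ in the dense unit group $\K^\times$ and both sides are continuous polynomials in $(\lambda,\mu)$, comparison of coefficients forces the individual $c_{ij}$ to agree, giving the desired equality at $(i,j) = (\alpha,\beta)$. The main obstacle will be this last uniqueness step, which rests essentially on the density of $\K^\times$ and on the polynomial (rather than merely continuous) nature of the coefficient maps; the chain-rule identifications of iterated normalized differentials with coefficients of $h$ are routine but depend on the class $C^{[\cdot]}$ hypotheses guaranteed by Corollary \ref{SimplicialDiffSmooth}.
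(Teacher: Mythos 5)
Your proposal is correct, but it follows a genuinely different route from the paper. The paper's proof unfolds the definition of $\D^{\alpha_i}_{v_i}$ as the value at $\sss^{(i)}=\ooo$ of the simplicial difference quotient $f^{\la \alpha_i\ra}(x,v_i,\ooo;\sss^{(i)})$, writes the whole composite for non-singular parameters as one explicit multiple sum of values of $f$ at the points $x+\sum_{\ell}(s^{(\ell)}_{j_\ell}-s^{(\ell)}_0)v_\ell$ divided by products of differences of the $s$'s, observes that this closed formula is manifestly symmetric in the $k$ blocks of data, and then transports the symmetry to the singular locus $\sss=\ooo$ by continuity and density of $\K^\times$ --- so all permutations are handled at once and no expansion machinery is needed. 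You instead reduce to adjacent transpositions and prove the two-operator commutation by comparing iterated radial expansions of $g(x+\sigma v+\tau w)$ in the two orders, restricting to the lines $\sigma=\lambda s$, $\tau=\mu s$, and invoking uniqueness of the one-variable expansion (Theorem \ref{UniquenessTaylorExpansion}) plus separation of bi-homogeneous components (the method of Theorem \ref{ContinuousDecomposition}); this is essentially the classical Schwarz-lemma argument transposed to normalized differentials. Your route is valid --- the order bookkeeping works out since $\D^i_vg$ is of class $\CC^{[k'-i]}$ by Corollary \ref{SimplicialDiffSmooth}, the substituted remainders still vanish at $s=0$, and the final coefficient comparison only needs invertibility of scalars of the form $r^i-r^j$, available by density of $\K^\times$ --- but it costs you the extra separation step and careful tracking of remainder terms, whereas the paper's explicit divided-difference formula makes the symmetry visible immediately and, as a bonus, is reused verbatim in the uniqueness part of Theorem \ref{SimplicialExtensionTheorem}.
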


\begin{proof}
$\D^\Alpha_{\vvv}f(x)$ is obtained, by restriction to $\sss = 0$, from the  map  
$$
\D^\Alpha_{\vvv,\sss}f(x):=\left( \D^{\alpha_k}_{v_k,\sss^{(k)}} \circ \ldots \circ \D^{\alpha_1}_{v_1,\sss^{(1)}} \right) f(x),
$$
where for all $1\leq i\leq k$, we let
$\D^{\alpha_i}_{v_i,\sss^{(i)}}:=f^{\la \alpha_i\ra}(x,v_i,\ooo;\sss^{(i)})$.
From the definition of the simplicial different quotient map, we get, for non-singular $\sss^{(1)}$:
$$
\D^{\alpha_1}_{v_1,\sss^{(1)}} f(x)=
\sum_{j_1=0}^{\alpha_1}
\frac{f\bigl(x +   (s^{(1)}_{j_1} - s^{(1)}_0) v_1\bigr)}
{\prod_{i=0,\ldots,\hat j_1,\ldots,\alpha_1 }(s^{(1)}_{j_1}-s^{(1)}_i)}.
$$
In a second step, for non-singular $\sss^{(2)}$, we get:
$$
\left( \D^{\alpha_2}_{v_2,\sss^{(2)}}  \circ \D^{\alpha_1}_{v_1,\sss^{(1)}} \right) f(x)=
\sum_{j_2=0}^{\alpha_2} \sum_{j_1=0}^{\alpha_1}
\frac{f\bigl(x +   (s^{(1)}_{j_1} - s^{(1)}_0) v_1 + (s^{(2)}_{j_2} -
s^{(2)}_0) v_2\bigr)}
{\prod_{i=0,\ldots,\hat j_1,\ldots,\alpha_1 }(s^{(1)}_{j_1}-s^{(1)}_i)
\prod_{i=0,\ldots,\hat j_2,\ldots,\alpha_2 }(s^{(2)}_{j_2}-s^{(2)}_i)},
$$
and so on: by induction, we finally get, for non singular $\sss^{(i)}$:
$$
\left( \D^{\alpha_k}_{v_k,\sss^{(k)}} \circ \ldots \circ \D^{\alpha_1}_{v_1,\sss^{(1)}} \right) f(x)=
\sum_{j_1=0}^{\alpha_1} \dots\sum_{j_k=0}^{\alpha_k}
\frac{f\bigl(x +  \sum_{\ell=1}^k (s^{(\ell)}_{j_\ell} - s^{(\ell)}_0)
v_\ell\bigr)}
{\prod_{\ell=1}^k\prod_{i=0,\ldots,\hat j_\ell,\ldots,\alpha_\ell
}(s^{(\ell)}_{j_\ell}-s^{(\ell)}_i)}.
$$
Obviously, the right-hand side term does not change if we apply the operators
$\D^{\alpha_i}_{v_i,\sss^{(i)}}$ in another order. 
Hence, by continuity and density of $\K^\times$ in $\K$, 
this remains true for singular values of $\sss^{(i)}$,
and in particular for $\sss^{(i)}=\ooo$.
\end{proof}

\begin{theorem}\label{NormalizedDifferentialPolynomial}\label{JetExplicitFormula}
Let $f:U\to W$ be a map of class $\CC^{[k]}$. 
Then:
\begin{enumerate}
\item
For all $x\in U$ and 
for all multi-indices $\Alpha\in\N^k$ such that $|\Alpha|\leq k$,
the map $V^k\to W,\vvv\mapsto \D^\Alpha_{\vvv}f(x)$ is 
polynomial multi-homogeneous of multidegree $\Alpha$.
\item
The map $\vvv \mapsto f^{\la j \ra}(x,\vvv; \ooo)$ is polynomial. More precisely,
 for all $1\leq j\leq k$,  
\begin{equation}\label{TaylorEqn}
f^{\la j \ra}(x,\vvv; \ooo)=\sum_{\Alpha\in\N^k, \sum_{i=1}^k i\alpha_i=j}\D^\Alpha_{\vvv} f(x) \ .
\end{equation} 
\end{enumerate}
\end{theorem}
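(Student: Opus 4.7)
For Part~(1), I would argue by induction on the length $k$ of the multi-index, the case $k=1$ being nothing but Corollary~\ref{DiffSimpPolynomial}. For the inductive step, set $g_{v_1}(x) := \D^{\alpha_1}_{v_1} f(x) = f^{\la \alpha_1\ra}(x,v_1,\ooo;\ooo)$. Two things have to be checked: first, that $g_{v_1}$ retains enough regularity in $x$ for further normalized differentials to be well-defined (specifically, class $\CC^{[k-\alpha_1]}$), which follows from the regularity inheritance results of Appendix~B applied to $f^{\la \alpha_1\ra}$ and the fact that $|\Alpha|\leq k$; second, that $v_1\mapsto g_{v_1}(x)$ is polynomial homogeneous of degree $\alpha_1$, which is exactly Corollary~\ref{DiffSimpPolynomial}. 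The induction hypothesis applied to $g_{v_1}$ then yields that $\D^\Alpha_\vvv f(x) = \D^{(\alpha_2,\ldots,\alpha_k)}_{(v_2,\ldots,v_k)} g_{v_1}(x)$ is polynomial multi-homogeneous of multidegree $(\alpha_2,\ldots,\alpha_k)$ in $(v_2,\ldots,v_k)$ with coefficients depending polynomial-homogeneously of degree $\alpha_1$ on $v_1$, which is exactly the claimed multi-homogeneity.

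For Part~(2), the plan is to substitute the single-variable radial expansion into the multi-variable one and use uniqueness (Theorem~\ref{UniquenessTaylorExpansion}). Write $u(t) := v_1 + tv_2 + \ldots + t^{k-1}v_k$, so that $x + tu(t) = x + tv_1 + t^2 v_2 + \ldots + t^k v_k$. The single-variable radial expansion gives
\begin{equation*}
f(x + tu(t)) = f(x) + \sum_{i=1}^k t^i P_i(u(t)) + t^k R_k(x,u(t),t),
\end{equation*}
where $P_i(u) := \D^i_u f(x)$ is polynomial homogeneous of degree $i$ in $u$ by Corollary~\ref{DiffSimpPolynomial}. Using the multi-homogeneous decomposition $P_i(\sum_l y_l v_l) = \sum_{|\Alpha|=i} y^\Alpha\, Q_\Alpha(\vvv)$ and specializing $y_l = t^{l-1}$, each monomial contributes $t^{\sum_l(l-1)\alpha_l}$, so $t^i P_i(u(t))$ contributes $t^{\sum_l l\alpha_l}$. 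Gathering terms of total degree $j \le k$ in $t$ (and showing that the contributions from $\sum_l l\alpha_l > k$ together with $t^k R_k$ combine into a legitimate remainder $t^k S(x,\vvv,t)$ with $S(\cdot,0)=0$) then, by uniqueness of the multi-variable radial expansion, identifies
\begin{equation*}
f^{\la j\ra}(x,\vvv;\ooo) = b_j(x,\vvv) = \sum_{\Alpha:\,\sum_l l\alpha_l = j} Q_\Alpha(\vvv).
\end{equation*}

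The last step is to identify $Q_\Alpha(\vvv)$ with $\D^\Alpha_\vvv f(x)$. For this I would apply the single-variable radial expansion successively in the directions $v_1, v_2, \ldots, v_k$ using independent parameters $t_1, t_2, \ldots, t_k$: each application peels off a factor $t_l^{\alpha_l}$ with coefficient $\D^{\alpha_l}_{v_l}$ acting on the remaining truncated Taylor data, and Lemma~\ref{NormalizedDifferentialSchwarzLemma} guarantees that the order in which one does this is immaterial. This yields, for $|\Alpha|\leq k$, a "multi-variable" expansion whose coefficient of $t_1^{\alpha_1}\cdots t_k^{\alpha_k}$ is precisely $\D^\Alpha_\vvv f(x)$. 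Specializing $t_l = t^l$ recovers the earlier expansion of $f(x+tv_1+\ldots+t^k v_k)$, and matching coefficients of $t^j$ with the formula above forces $Q_\Alpha(\vvv) = \D^\Alpha_\vvv f(x)$, completing the proof.

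The main obstacle I expect is the rigorous control of the remainders throughout the iterated radial expansions: one has to verify that the intermediate functions $(\D^{\alpha_j}_{v_j} \circ \cdots \circ \D^{\alpha_1}_{v_1}) f$ remain of the right class $\CC^{[k - \alpha_1 - \cdots - \alpha_j]}$ so that the next expansion is legal, and that the multiple remainder terms arising from successive substitutions coalesce into a single remainder satisfying the correct vanishing condition at $t=0$ after the specialization $t_l = t^l$. Once this bookkeeping is done, the identification of the $Q_\Alpha$ with the iterated normalized differentials drops out cleanly from the uniqueness statement of Theorem~\ref{UniquenessTaylorExpansion}.
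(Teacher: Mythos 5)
Your Part (2) is, at its core, the paper's own argument: the decisive step is the iterated radial expansion of $f(x+tv_1+\cdots+t^kv_k)$, one variable at a time, compared against the multi-variable radial expansion via the uniqueness statement of Theorem \ref{UniquenessTaylorExpansion}. The paper does exactly this (expanding with parameter $t^\ell$ at the $\ell$-th step rather than with independent parameters $t_\ell$ specialized afterwards), collects the over-degree terms $\sum_l l\alpha_l>k$ into the remainder just as you indicate, and invokes Corollary \ref{SimplicialDiffSmooth} for the legality of each successive expansion. Your preliminary stage --- the one-shot expansion along $u(t)=v_1+tv_2+\cdots+t^{k-1}v_k$ and the introduction of the components $Q_\Alpha$ --- is logically redundant: once the iterated expansion is in place you can match it directly against the multi-variable expansion and read off (\ref{TaylorEqn}). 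Moreover, the identification $Q_\Alpha=\D^\Alpha_\vvv f(x)$ is not forced by ``matching coefficients of $t^j$'' alone; that only yields equality of the sums over $\{\Alpha:\sum_l l\alpha_l=j\}$, and separating the individual terms would require a further multi-homogeneous decomposition in the spirit of Theorem \ref{ContinuousDecomposition}. (Also, Lemma \ref{NormalizedDifferentialSchwarzLemma} is not needed in Part (2) if you fix the order of expansion once and for all.)

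The one genuine soft spot is in your Part (1). Your induction gives, for each fixed $v_1$, that $(v_2,\ldots,v_k)\mapsto\D^{(\alpha_2,\ldots,\alpha_k)}_{(v_2,\ldots,v_k)}g_{v_1}(x)$ is polynomial multi-homogeneous, and Corollary \ref{DiffSimpPolynomial} gives that $v_1\mapsto g_{v_1}(x)=\D^{\alpha_1}_{v_1}f(x)$ is polynomial homogeneous of degree $\alpha_1$; but the assertion that the full composite still depends polynomial-homogeneously of degree $\alpha_1$ on $v_1$ follows from neither of these. The outer operators $\D^{\alpha_i}_{v_i}$ act on the \emph{function} $x\mapsto g_{v_1}(x)$, so you would have to argue separately that they preserve the polynomial dependence on $v_1$ (say, via $\K$-linearity of the divided-difference operators in the function argument, extended by density to singular parameters, together with control of the differentiability class of the coefficient functions). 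This is precisely the point where the paper instead uses the symmetry Lemma \ref{NormalizedDifferentialSchwarzLemma}: since $\D^\Alpha_\vvv f(x)$ is unchanged under reordering, each $v_i$ can be moved to the outermost position, where Corollary \ref{DiffSimpPolynomial} applies directly and yields polynomial homogeneity of degree $\alpha_i$ in $v_i$. Either supply the missing linearity argument or replace your induction by this reordering argument; with that repair the rest of your proposal goes through.
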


\begin{proof}
(1) Note that
$
 \D^\Alpha_{\vvv} f(x)=\left(\D^{\alpha_k}_{v_k}\circ \ldots \circ \D^{\alpha_1}_{v_1}\right)f (x)
$ 
is polynomial homogeneous of degree $\alpha_k$ in $v_k$, by Corollary \ref{DiffSimpPolynomial}.
By the previous lemma, the value of $\D^\Alpha_{\vvv} f(x)$ is independent of
 the order in which  we compose the normalized differentials.
Therefore  $\D^\Alpha_{\vvv} f (x)$ 
is also polynomial homogeneous of degree $\alpha_i$ in $v_i$ for all $1\leq i\leq k$, 
i.e., $\vvv\mapsto \D^\Alpha_{\vvv} f(x)$ is a polynomial multi-homogeneous map
 of multidegree $\Alpha$.

\ssk
(2) On the one hand, we use the $k$-th order radial limited expansion, 
successively for each variable $v_i$, $1\leq i \leq k$. This is well-defined
 (see Corollary \ref{SimplicialDiffSmooth}).

\msk
\nin
$f\left(x+\sum_{i=1}^k t^iv_i\right) \ = \ f \left(x+\sum_{i=2}^{k} t^iv_i +t^1v_1\right)$
\begin{eqnarray*}
&=&\sum_{\alpha_1=0}^k t^{\alpha_1} \D^{\alpha_1}_{v_1}f 
\left(x+\sum_{i=2}^{k}t^iv_i\right)+t^kR^1_k(x,v_1,t)\\
&=&\sum_{\alpha_1=0}^k \sum_{\alpha_{2}=0}^{k-\alpha_1} t^{\alpha_1}t^{2\alpha_2}
(\D^{\alpha_2}_{v_2} \circ \D^{\alpha_1}_{v_1})f
  \left(x+\sum_{i=3}^k t^i\right)\\
 & & \qquad \qquad  +
t^kR^1_k(x,v_1,t)+
t^kR^{2}_k(x,v_1,v_2,t)\\
&=&\sum_{0\leq \alpha_1,\ldots,\alpha_k\leq k,\sum_i \alpha_i\leq k} 
t^{\alpha_1}\ldots t^{k\alpha_k}\left(\D^{\alpha_k}_{v_k}\circ \ldots \circ \D^{\alpha_1}_{v_1}\right)f (x)\\
&&\qquad   \qquad +\sum_{i=1}^k t^kR^i_k(x,v_1,\ldots,v_i,t)\\
&=&\sum_{\Alpha\in\N^k,|\Alpha|\leq k}t^{\sum_i i\alpha_i} \D^\Alpha_{\vvv} f (x)+t^kR_k(x,\vvv,t),\\
&=&f(x)+\sum_{j=1}^k\sum_{\Alpha\in\N^k,\sum_i i\alpha_i=j}t^{j} \D^\Alpha_{\vvv} f(x)+t^kR_k(x,\vvv,t),
\end{eqnarray*}
where $R_k(x,\vvv,t):=\sum_{i=1}^k R^i_k(x,v_1,\ldots,v_i,t) +
\sum_{j>k} \sum_{\Alpha \in \N^k, \sum_i i \alpha_i = j} t^{j-k} \D^\Alpha_{\vvv} f (x)$ 
satisfies the remainder condition 
$R_k(x,\vvv,0)=0$.

On the other hand, we use the $k$-th order multi-variable radial limited expansion:
$$
f(x+\sum_{i=1}^k t^iv_i ) \ = \
f(x) + \sum_{j=1}^k t^j f^{\la j \ra}(x,v_1,\ldots,v_j; \ooo) +t^k S_k(x,\vvv;0,\ldots,0,t)
$$
Relation (\ref{TaylorEqn})  now follows 
by uniqueness of this expansion (Theorem \ref{UniquenessTaylorExpansion}).
\end{proof}

\begin{remark}
If the integers are invertible in $\K$, then relation (\ref{TaylorEqn}) reads
\begin{equation}\label{TaylorEqn2}
f^{\la j \ra}(x,\vvv; \ooo)=\sum_{\Alpha\in\N^k, \sum_i i\alpha_i=j}
 \dfrac{d^{|\Alpha|}f(x,\vvv_\Alpha)}{\Alpha!} \, ,
\end{equation}
where $\Alpha!:=\alpha_1!\ldots\alpha_k!$ and 
$\vvv_{\Alpha}:=(\underbrace{v_1,\ldots,v_1}_{\alpha_1 \times  },\ldots,
\underbrace{v_k,\ldots, v_k}_{\alpha_k \times })$
($v_i$ appearing $\alpha_i$ times).
Indeed, $\D^{\alpha_1}_{v_1}f(x)=\frac{d^{\alpha_1}f(x)(v_1,\ldots,v_1)}{\alpha_1!}$;
iterating this, and using Theorem \ref{SchwarzLemma}, we get
\begin{eqnarray*}
\D^{\alpha_1,\alpha_2}_{v_1,v_2}f(x)=\left(\D^{\alpha_2}_{v_2}\circ \D^{\alpha_1}_{v_1}\right)f (x)
&=&\frac{d^{\alpha_2}\left( \frac{d^{\alpha_1}f(\cdot)(v_1,\ldots,v_1)}{\alpha_1!}\right)(x)(v_2,\ldots,v_2)}{\alpha_2!}\\
&=&\frac{d^{\alpha_1+\alpha_2}f(x)(v_1,\ldots,v_1,v_2,\ldots,v_2)}{\alpha_1!\alpha_2!},
\end{eqnarray*}
and so on: by induction, we have $f^\Alpha(x,\vvv)=\dfrac{d^{|\Alpha|}f(x,\vvv_\Alpha)}{\Alpha!}$,
whence (\ref{TaylorEqn2}).
Note that these formulae  are in keeping with the formulae given in
\cite{Be08}, Chapter 8; however, the methods used there are less 
well adapted to the case of arbitrary characteristic.
\end{remark}

\subsection{The simplicial $\K$-jet as a scalar extension.}
Recall from Appendix B the definition of the {\em (simplicial) $k$-jet of $f$}, $\SJ^k f$, and 
that $\SJ^{ \la k \ra}_{(\sss)}$ and in particular $\SJ^k$ are functors
(Theorem \ref{SimpDifferentialFunctorialityTheorem}). 
They commute with direct products,  and hence, applied to
the ring $(\K,+,m)$ with ring multiplication $m$ and addition $a$,
 they yield new rings, denoted by $\SJ_{(\sss)}^{\la k\ra}\K$, resp.\ $\SJ^k \K$.
These rings have been determined explicitly in \cite{Be10}:
\begin{equation}\label{JetRings}
\SJ^{\la k \ra}_{(\sss)} \K \cong \K[X]/\bigl(  X(X-s_1) \cdots (X-s_k) \bigr), \quad \quad
\SJ^k \K \cong \K[X]/(X^{k+1}).
\end{equation}
We denote the class of the polynomial $X$ in $\SJ^k\K$ by $\delta$, so that
$1,\delta,\ldots,\delta^k$ is a $\K$-basis of $\SJ^k \K$. 
The following facts can be proved in a conceptual way, without using the
explicit isomorphism: 
 
\begin{lemma}\label{GradedLemma}
The $\K$-algebra $\SJ^k \K$ is {\em $\N$-graded}, i.e., of the form
$$
\SJ^k \K = E_0 \oplus E_1 \oplus \ldots \oplus E_k \quad \mbox{ with } \qquad
E_j \cdot E_i \subset E_{i+j} .
$$
In particular,  $E_1 \oplus \ldots \oplus E_k$ is a nilpotent subalgebra.
\end{lemma}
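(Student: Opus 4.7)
The plan is to exhibit the grading as the weight space decomposition under the canonical $\K^\times$-action on $\SJ^k \K$ developed in Appendix A; as indicated in the introduction, this action is what gives rise to the natural grading of the jet ring. First I would construct a family of $\K$-algebra automorphisms $\phi_r \in \Aut_\K(\SJ^k \K)$ parametrized by $r \in \K^\times$, by transporting the scaling action on the parameter $\sss$ of the extended functor $\SJ^{\la k \ra}_{(\sss)}$ through the specialization $\sss = \ooo$ that produces $\SJ^k$. Verifying that each $\phi_r$ is a \emph{ring} automorphism (not merely a $\K$-linear bijection) uses the compatibility of this $\K^\times$-action with the functorial application of the multiplication $\K \times \K \to \K$, built into the construction in Appendix A.

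Next I would set, for $j = 0, 1, \ldots, k$,
$$
E_j := \{ \xi \in \SJ^k \K \mid \phi_r(\xi) = r^j \xi \text{ for all } r \in \K^\times \},
$$
so that the inclusion $E_i \cdot E_j \subset E_{i+j}$ is immediate from $\phi_r(\xi \eta) = \phi_r(\xi) \phi_r(\eta) = r^{i+j} \xi \eta$. To establish the decomposition $\SJ^k \K = \bigoplus_{j=0}^k E_j$ without appealing to the explicit presentation $\K[X]/(X^{k+1})$, I would use that $\phi_r$ depends polynomially on $r$ of degree at most $k$: since $\SJ^k$ is constructed via $k$-th order Taylor polynomials (Theorem \ref{TaylorPolyTheorem}), the assignment $r \mapsto \phi_r$ extends to a polynomial map $\K \to \End_\K(\SJ^k \K)$ of that degree. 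Writing $\phi_r = \sum_{j=0}^k r^j T_j$ and comparing coefficients of $r^a s^b$ on both sides of $\phi_{rs} = \phi_r \circ \phi_s$ (the identity being polynomial in $r,s$ and holding on the dense set $\K^\times \times \K^\times$) yields $T_a T_b = \delta_{ab} T_a$; together with $\phi_1 = \id$, i.e., $\sum_j T_j = \id$, the family $\{T_j\}$ is a complete system of orthogonal idempotents whose images are exactly the $E_j$. Nilpotency of $E_1 \oplus \ldots \oplus E_k$ is then automatic, since any product of $k+1$ of its elements lands in $\bigoplus_{m > k} E_m = 0$.

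The main obstacle is the first step: arguing conceptually that $\phi_r$ is both a ring automorphism and polynomial in $r$, without reverting to the explicit isomorphism $\SJ^k \K \cong \K[X]/(X^{k+1})$. A naive attempt to set $\phi_r := \SJ^k(m_r)$ with $m_r(x) = rx$ would yield only the $\K$-linear action of $r$ by scalar multiplication, which is certainly not multiplicative; one must instead use the subtler scaling on extended domains provided by Appendix A and track how it interacts with the multiplication on $\K$ under the functor $\SJ^k$.
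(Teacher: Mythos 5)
Your core strategy coincides with the paper's: the grading is the eigenspace decomposition for the canonical $\K^\times$-action on $\SJ^k\K$, which acts by \emph{ring} automorphisms because $\SJ^{\la k\ra}m$ commutes with $\rho_{\la k\ra}(r)$ (Theorem \ref{SimpDifferentialFunctorialityTheorem}\,(ii) applied to the multiplication map $m$ of $\K$) -- this also disposes of what you call the ``main obstacle'', since the relevant action is not $\SJ^k(m_r)$ but the action on extended domains defined in Appendix A, exactly as you suspect. Where you diverge is in establishing $\SJ^k\K=\bigoplus_j E_j$. The paper reads this off directly: $\rho_{\la k\ra}(r)$ is given \emph{by definition} in the diagonal form $(v_0,\ldots,v_k;\ooo)\mapsto(v_0,rv_1,\ldots,r^kv_k;\ooo)$, so on $\SJ^k\K\cong\K^{k+1}$ the $j$-th coordinate line is an $r^j$-eigenline and the module decomposition is immediate; this uses only the formula for the action, not the presentation $\K[X]/(X^{k+1})$, so your reluctance to use ``the explicit form'' conflates two different things. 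Your alternative route via the functional equation $\phi_{rs}=\phi_r\circ\phi_s$ and orthogonal idempotents $T_j$ is workable but has a soft spot: over a general topological base ring, ``comparing coefficients of $r^as^b$'' in an identity of polynomial maps that holds on the dense set $\K^\times\times\K^\times$ is not a formal operation -- one must first know that (multi-)homogeneous components of such a map are uniquely determined, which is precisely the content (and proof method) of Theorem \ref{ContinuousDecomposition}. With that reference supplied your idempotent argument closes, and the same tool is what one would cite to make the eigenspace statement fully rigorous in either version; but the direct reading of the definitional formula is shorter and is what the paper intends. Your nilpotency argument (any product of $k+1$ elements of $E_1\oplus\ldots\oplus E_k$ lies in $\bigoplus_{m>k}E_m=0$) is exactly right.
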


\begin{proof}
This is a direct consequence of
 Theorem \ref{SimpDifferentialFunctorialityTheorem}: 
$\SJ^k m$ commutes with the canonical $\K^\times$-action; hence this action is
by ring automorphisms. Thus the eigen\-spaces 
$E_j = \{ x \in \SJ^k \K \mid \, \forall r \in \K^\times : r.x = r^j x \}$ define a grading of $\SJ^k \K$.
\end{proof}

The canonical projection
$$
\pi^k:\SJ^k \K \to \K, \quad [P(X)] \mapsto P(0)
$$
is a ring homomorphism having a
 section $\sigma^k : \K\to \SJ^k\K, \, t \mapsto t \cdot 1$ (classes of constant polynomials), 
 called the {\em canonical injection} or {\em canonical zero section}.
We denote by $\SJ^k_0 \K = \langle \delta, \ldots, \delta^k \rangle$ the  kernel of $\pi^k$ (the fiber of $\pi^k:\SJ^k\K\to \K$ over $0$).
Note that $\SJ^k  \K$ is again a topological ring having a dense unit group; hence
we can speak of maps that are smooth (or of class $\CC^{[k]}$) over this ring.

\begin{theorem} [Simplicial Scalar Extension Theorem]
\label{SimplicialExtensionTheorem}
If $f:U \to W$ is smooth over $\K$, then $f$ admits a unique extension to $\SJ^k\K$-smooth
map  $F:\SJ^k U \to \SJ^k W$:  there exists a unique map
$F:\SJ^k U \to \SJ^k W$ (namely, $F = \SJ^k f$) such that
\begin{enumerate}
\item
$F$ is smooth over the ring $\SJ^k \K$, and
\item
$F(x)=f(x)$ for all $x \in U$, that is,
$
F \circ \sigma_U = \sigma_W \circ f$,
where $\sigma_U:U \to \SJ^k U$ and $\sigma_W:W \to \SJ^k W$ are the canonical injections.
\end{enumerate}
More precisely, any $\SJ^k\K$-smooth
map  $F:\SJ^k U \to \SJ^k W$ is uniquely determined by its restriction to the base
$\sigma_U(U) \subset \SJ^k U$. 
\end{theorem}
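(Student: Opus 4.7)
The plan has two parts: existence by exhibiting $F := \SJ^k f$ together with an explicit closed formula, and uniqueness by applying the multi-variable radial limited expansion (Theorem \ref{UniquenessTaylorExpansion}) to any candidate $F$ with the scalar variable set to the nilpotent element $\delta \in \SJ^k \K$.

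For existence, I would take $F := \SJ^k f$ and show that on a generic point $x + \delta v_1 + \delta^2 v_2 + \cdots + \delta^k v_k$ of $\SJ^k U$ (with $x \in U$, $v_i \in V$) it admits the closed formula
\[
F\bigl(x + \delta v_1 + \cdots + \delta^k v_k\bigr) = f(x) + \sum_{j=1}^k \delta^j \, f^{\la j \ra}(x, v_1, \ldots, v_j; \ooo) \, .
\]
Property (ii) is then immediate on plugging in $v_1 = \cdots = v_k = 0$. For property (i), the formula displays $F$ as a map polynomial in the fiber variables $v_1, \ldots, v_k$ with $\K$-smooth coefficient functions of $x$; the scalar-extension machinery of Appendix C then delivers $\SJ^k \K$-smoothness.

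For uniqueness, I would start from an arbitrary $\SJ^k \K$-smooth $F : \SJ^k U \to \SJ^k W$ with $F \circ \sigma_U = \sigma_W \circ f$ and show it is forced to satisfy the displayed formula. Applying Theorem \ref{UniquenessTaylorExpansion} to $F$ regarded as a map of class $\CC^{\la k \ra}$ over $\SJ^k \K$, with the scalar variable $t := \delta$, yields
\[
F\bigl(x + \delta v_1 + \cdots + \delta^k v_k\bigr) = F(x) + \sum_{j=1}^k \delta^j F^{\la j \ra}(x, v_1, \ldots, v_j; \ooo) + \delta^k S_k(x, \vvv; 0, \ldots, 0, \delta) \, .
\]
The remainder vanishes: since $S_k(x, \vvv; 0, \ldots, 0, 0) = 0$ and $S_k$ is $\CC^{[1]}$ in its last slot, it factors as $S_k(x, \vvv; 0, \ldots, 0, \delta) = \delta \cdot G(x, \vvv, \delta)$ for a continuous $G$, whence $\delta^k S_k = \delta^{k+1} G = 0$. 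Moreover, for $x \in U$ and $v_i \in V$, the coefficient $F^{\la j \ra}(x, v_1, \ldots, v_j; \ooo)$ agrees with $f^{\la j \ra}(x, v_1, \ldots, v_j; \ooo)$: the explicit simplicial difference-quotient formula from the proof of Lemma \ref{NormalizedDifferentialSchwarzLemma}, applied with non-singular $\sss \in \K^{j+1}$, expresses the value as a $\K$-linear combination of terms $F(x + \text{base vector}) = f(x + \text{base vector})$, and the $\sss \to \ooo$ limit along $\K^{j+1}$ gives the claimed equality. Thus $F$ is entirely recovered from $f$, which proves both uniqueness and the stronger final assertion.

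The main obstacle I anticipate is the careful justification of the remainder cancellation $\delta^k S_k(x, \vvv; 0, \ldots, 0, \delta) = 0$: one must invoke enough $\SJ^k \K$-smoothness of $F$ so that $S_k$ itself admits a one-step Taylor expansion in its last argument, guaranteeing the factorization through $\delta$. A secondary subtlety is that $F^{\la j \ra}(x, \vvv; \ooo)$ at base arguments must actually lie in the image of $\sigma_W$; this follows because the difference-quotient formula uses only $\K$-scalars and $F$-values at base points, so the relevant limit stays in $\sigma_W(W)$ and permits the identification with $f^{\la j \ra}$.
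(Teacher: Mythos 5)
Your uniqueness argument is correct and is essentially the paper's: the paper plugs $t=\delta$ into the order-$(k+1)$ radial expansion written in terms of the normalized polynomial differentials $\D^\Alpha_{\vvv}F(x)$ (so that no remainder appears at all, since it carries $\delta^{k+1}=0$), whereas you use the order-$k$ multi-variable expansion with coefficients $F^{\la j\ra}(x,\vvv;\ooo)$ and kill the remainder by the factorization $S_k(\ldots;\delta)=\delta\cdot G$; both variants work, and your identification of $F^{\la j\ra}$ with $f^{\la j\ra}$ via the explicit divided-difference formula at non-singular $\sss\in\K^j$ followed by a density limit is exactly the paper's mechanism.

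The genuine gap is in your existence step, specifically the sentence claiming that ``the scalar-extension machinery of Appendix C then delivers $\SJ^k\K$-smoothness.'' Appendix C (Theorems \ref{PolContinousSmooth} and \ref{PolynomialScalarExtensionSmooth}) concerns globally polynomial maps $V\to W$ and their algebraic scalar extensions; but $F=\SJ^k f$ is only polynomial \emph{in the fiber variables}, with coefficients $f^{\la j\ra}(x,\cdot;\ooo)$ depending on the base point $x$, and the assertion to be proved is smoothness over the \emph{larger} ring $\SJ^k\K$ --- a condition on difference quotients taken with respect to increments and scalars in $\SJ^k\K$, not a fiberwise statement, and much stronger than $\K$-smoothness (compare holomorphy versus real smoothness). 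Nothing in Appendix C addresses this. The paper does not attempt to close this here either: it cites \cite{Be10}, Theorem 2.7, where $\SJ^k\K$-smoothness of $\SJ^k f$ is obtained from functoriality --- $\SJ^k$ is a product-preserving functor, the defining diagrams of the class $\CC^{[1]}_\K$ are transported to diagrams over the ring $\SJ^k\K$, one identifies $(\SJ^k f)^{]1[}$ over $\SJ^k\K$ with $\SJ^k(f^{[1]})$, and concludes by induction (this is the same mechanism spelled out in the proof of Theorem \ref{WeilFunctorTheoremExistence}). If you want a self-contained existence proof, you need that argument; the closed formula plus Appendix C does not suffice.
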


\begin{proof}
Existence has been proved  in \cite{Be10}, Theorem 2.7.
Uniqueness is a consequence of Theorem \ref{JetExplicitFormula}: 
for $F$ as in the claim,
we will establish an ``explicit formula'' in
terms of its values on the base $\sigma_U(U)$. 
Let $z=(v_0,\ldots,v_k)=v_0+\delta v_1+\ldots + \delta^k v_k \in \SJ^k U$,
 with $x \in U$ and $v_i \in V$.
Since $F$ is smooth over the ring $\SJ^k \K$, we may take $t=\delta$ 
and use the radial expansion
of $F$ (cf.\ proof of Theorem  \ref{JetExplicitFormula}) at order $k+1$: we get
$$
F\left(x+\sum_{i=1}^k \delta^iv_i\right)=
F(x)+\sum_{\ooo\neq\Alpha\in\N^k}\delta^{\sum_i i\alpha_i} \D^\Alpha_{\vvv} F(x),
$$
where, in contrast to Theorem \ref{JetExplicitFormula}, 
no remainder term appears here, since $\delta^{k+1}=0$.

Now, it follows directly from the proof of Lemma \ref{NormalizedDifferentialSchwarzLemma}
that $\D^\Alpha_{\vvv} F(x)$ is determined by its values on the base (i.e., if
$ F(x)=0$ for all $x\in U$, then $\D^\Alpha_{\vvv} F(x) =0$):
for non-singular values of $\sss^{(i)}$, $1\leq i\leq k$, this is seen for the map
$\D^\Alpha_{\vvv,\sss}F(x)$
where we can take all $\sss^{(i)}\in \K^{\alpha_i}$, since the simplicial difference
quotients  are in terms of the values of $F$ at the points
$x+   \sum_{i=1}^k(s^{(i)}_{j_i} - s^{(i)}_0) v_{i} \in U$.
By density, uniqueness then also follows for singular values of $\sss$.
\end{proof}

\begin{corollary}\label{PolyCor}
Assume $P,Q:V \to W$ are $\K$-smooth polynomial maps. Then: 
\begin{enumerate}[label=\roman*\emph{)},leftmargin=*]
\item
 $\SJ^k P:\SJ^k V \to \SJ^k W$ is  a $\SJ^k\K$-smooth $\SJ^k\K$-polynomial map, and
 coincides with the algebraic scalar extension (cf.\ Appendix A, 
Definition \ref{PolynomialScalarExtension}) $P_{\SJ^k \K}$ of $P$ from $\K$ to
$\SJ^k \K$.
\item
The restriction
 $\SJ^k_0 P$
 of $ \SJ^k P$ to the fiber $\SJ^k_0V=V\otimes_\K \SJ^k_0\K$ over $0$  is again a  polynomial map, 
and it  coincides with the algebraic scalar extension of $P$ from $\K$ to
the (non-unital) ring $\SJ^k_0 \K$.
\item Assume $P(0)=Q(0)$. Then 
 $\SJ^k_0 P = \SJ^k_0 Q$ if, and only if, $P\equiv Q \mod(\deg > k)$ (i.e., $P$ and $Q$ coincide
 up to terms of degree $>k$).
 \end{enumerate}
\end{corollary}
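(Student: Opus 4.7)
The plan for part~(i) is to invoke the uniqueness clause of Theorem~\ref{SimplicialExtensionTheorem}. Because $P$ is $\K$-polynomial, its algebraic scalar extension $P_{\SJ^k\K}\colon\SJ^k V\to\SJ^k W$ exists (cf.\ Appendix~C) and is $\SJ^k\K$-polynomial, hence in particular $\SJ^k\K$-smooth; by the defining property of algebraic scalar extension, $P_{\SJ^k\K}\circ\sigma_V=\sigma_W\circ P$. On the other hand, $\SJ^k P$ is also $\SJ^k\K$-smooth (by Theorem~\ref{SimplicialExtensionTheorem}) and likewise restricts to $P$ on the base $\sigma_V(V)$. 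The uniqueness statement of that theorem then immediately yields $\SJ^k P=P_{\SJ^k\K}$.

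For part~(ii), write $P=P(0)+\tilde P$ where $\tilde P$ is a polynomial without constant term. Substituting $z\in V\otimes_\K\SJ^k_0\K$ into $\tilde P$ only involves products of elements of $\SJ^k_0\K$, which remain inside this (non-unital) ring; hence $\tilde P_{\SJ^k\K}(z)$ lies in $W\otimes_\K\SJ^k_0\K$ and, by construction, is the algebraic scalar extension of $\tilde P$ from $\K$ to $\SJ^k_0\K$. The constant term $P(0)$ is absorbed by viewing $W\hookrightarrow\SJ^k W$ through $\sigma_W$, giving the required description of $\SJ^k_0 P$ and showing that it is polynomial.

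For part~(iii), set $R:=P-Q$; by hypothesis $R(0)=0$, so $R=\sum_{j\geq 1}R_j$ with $R_j$ homogeneous of degree~$j$. By~(i)--(ii), for $z=\sum_{i=1}^k\delta^i v_i\in\SJ^k_0 V$ we have $(\SJ^k_0 R)(z)=\sum_{j\geq 1}(R_j)_{\SJ^k_0\K}(z)$. Each monomial of $(R_j)_{\SJ^k_0\K}(z)$ is a product $\delta^{i_1}\cdots\delta^{i_j}w=\delta^{i_1+\cdots+i_j}w$ with $i_\ell\geq 1$, so for $j>k$ the exponent exceeds $k$ and the term vanishes in $\SJ^k\K=\K[X]/(X^{k+1})$; thus the sum collapses to $\sum_{j=1}^k(R_j)_{\SJ^k_0\K}(z)$, which immediately gives the ``if'' direction. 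For the converse, specialize to $z=\delta h$ with $h\in V$: homogeneity yields $(R_j)_{\SJ^k\K}(\delta h)=\delta^j R_j(h)$, so
\[
0=(\SJ^k_0 R)(\delta h)=\sum_{j=1}^k\delta^j R_j(h),
\]
and $\K$-linear independence of $\delta,\delta^2,\ldots,\delta^k$ in $\SJ^k\K$ forces $R_j(h)=0$ for every $h\in V$ and each $1\leq j\leq k$, which is exactly $P\equiv Q\mod(\deg>k)$.

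The only delicate point I anticipate is the unambiguous interpretation in~(ii) of ``algebraic scalar extension over a non-unital ring'': one must confirm that substitution into a polynomial without constant term requires no unit, and that the constant $P(0)$ embeds naturally into $\SJ^k W$ via $\sigma_W$. All remaining steps follow mechanically from the uniqueness clause of Theorem~\ref{SimplicialExtensionTheorem} and the explicit truncation $\delta^{k+1}=0$ in $\SJ^k\K$.
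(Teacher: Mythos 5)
Your proof is correct and follows essentially the same route as the paper: part (i) via the uniqueness clause of Theorem \ref{SimplicialExtensionTheorem} together with the smoothness and polynomiality of the algebraic scalar extension (Theorem \ref{PolynomialScalarExtensionSmooth}), part (ii) by the same argument restricted to the fiber, and part (iii) by reducing to the criterion that the scalar extension to $\SJ^k_0\K$ of a polynomial without constant term vanishes iff it has no homogeneous parts of degree $1,\ldots,k$. The only difference is that you actually prove this last equivalence (nilpotency of $\delta$ for one direction; specializing to $z=\delta h$ and using the $\K$-linear independence of $\delta,\ldots,\delta^k$ for the other), whereas the paper simply asserts it.
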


\begin{proof} i) The extension $P_{\SJ^k \K}$ of $P$ from $\K$ to $\SJ^k\K$ is $\SJ^k\K$-smooth, 
$\SJ^k\K$-polynomial  and satisfies the extension condition: $$
P_{\SJ^k\K} \circ \sigma_U = \sigma_W \circ P,$$
 (see 
 Theorem \ref{PolynomialScalarExtensionSmooth}, with $\bA=\SJ^k\K$). 
By the preceding theorem, the $\SJ^k\K$-smooth map $\SJ^kP$ coincides with $P_{\SJ^k \K}$,
and thus  is  $\SJ^k\K$-polynomial.

Item ii) is proved by the same argument.
Finally, iii) follows from ii) since the algebraic scalar extension of a polynomial whithout constant term
$P$ from $\K$ to  $\SJ^k_0 \K$ vanishes if and only if $P$ contains no homogeneous terms
of degree $j=1,\ldots,k$.
\end{proof}

\subsection{Link between Taylor polynomials and simplicial jets}
It follows from Theorem \ref{JetExplicitFormula} that
 $\SJ^k f(x,\vvv)$ is polynomial in $\vvv$. We are going to show  
that this polynomial can be interpreted as a scalar extension of the
Taylor polynomial $\Tay_x^k f$:

\begin{theorem}[Scalar extension of the Taylor polynomial]
\label{TaylorTheorem}
Assume $f,g:
U \to W$ and $h:U'\to W'$ are of class $\CC^{[k]}$ such that $f(x)=g(x)$ and $h(U')\subset U$. Then:
\begin{enumerate}[label=\roman*\emph{)},leftmargin=*]
\item
$\Tay_x^{k} f= \Tay_x^{k} g$ $\iff$   $\SJ^k_x f = \SJ^k_x g.$
\item
The polynomial map $\SJ^k_x f$ is the 
scalar extension of the Taylor polynomial $\Tay_x^k f$ from $\K$
 to the nilpotent part $\SJ^k_0\K=\delta \K \oplus \ldots \oplus \delta^k \K$ of the ring $\SJ^k \K$:
$$
\SJ^k_x f = (\Tay_x^{k}f)_{\SJ^k_0 \K} \, .
$$
\item
We have the following ``chain rule for Taylor polynomials'':
$$
\Tay_x^k (g \circ h) = \bigl[ \Tay_{h(x)}^k g \circ \Tay_x^k h \bigr] \mod (\deg > k)
$$
(where $\mod (\deg > k)$ denotes the truncated composition of polynomials).
\end{enumerate}
\end{theorem}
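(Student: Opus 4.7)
The strategy is to establish (ii) first; items (i) and (iii) will then follow from (ii) combined with Corollary \ref{PolyCor}(iii), the jet chain rule (\ref{JetRule}), and the fact that scalar extension commutes with composition of polynomial maps.

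For (ii), both $\SJ^k_x f(\vvv)$ and $(\Tay_x^k f)_{\SJ^k_0\K}(\delta v_1 + \cdots + \delta^k v_k)$ are elements of $\SJ^k_{f(x)} W \cong W \oplus \delta W \oplus \cdots \oplus \delta^k W$. The $\delta^j$-component of the first is $f^{\la j\ra}(x, \vvv; \ooo)$, which by Theorem \ref{JetExplicitFormula}(2) equals $\sum_{\Alpha: \sum_i i\alpha_i = j} \D^\Alpha_\vvv f(x)$. For the second, I decompose each polynomial-homogeneous piece $a_i(x, \cdot)$ via its multi-homogeneous expansion $a_i(x, \sum_j c_j v_j) = \sum_{|\Alpha|=i} c_1^{\alpha_1}\cdots c_k^{\alpha_k}\, (a_i)_\Alpha(\vvv)$, substitute $c_j = \delta^j$, and collect powers of $\delta$ modulo $\delta^{k+1}=0$. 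Part (ii) thereby reduces to the key identity
\[
(a_{|\Alpha|})_\Alpha(\vvv) = \D^\Alpha_\vvv f(x) \quad \text{for every multi-index } \Alpha \text{ with } |\Alpha| \leq k.
\]
To prove this, I would apply the iterated radial expansion of $f(x + t(c_1 v_1 + \cdots + c_k v_k))$ in the $k$ independent directions $c_j v_j$ with common parameter $t$, mimicking the proof of Theorem \ref{JetExplicitFormula}: since each accumulated remainder $t^k R^\ell$ satisfies $R^\ell|_{t=0}=0$, no remainder term contributes to the $t^i$-coefficient for $i \leq k$, so $a_i(x, \sum_j c_j v_j) = \sum_{|\Alpha|=i} c_1^{\alpha_1}\cdots c_k^{\alpha_k}\, \D^\Alpha_\vvv f(x)$, and matching multi-degrees in the $c_j$ yields the desired identification.

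For (i), the implication $\Rightarrow$ is immediate from (ii). Conversely, if $\SJ^k_x f = \SJ^k_x g$, then by (ii) the scalar extensions $(\Tay_x^k f)_{\SJ^k_0\K}$ and $(\Tay_x^k g)_{\SJ^k_0\K}$ coincide; since both Taylor polynomials vanish at $0$ and have degree at most $k$, Corollary \ref{PolyCor}(iii) forces $\Tay_x^k f = \Tay_x^k g$. For (iii), the jet chain rule (\ref{JetRule}) together with (ii) applied three times gives $(\Tay_x^k(g\circ h))_{\SJ^k_0\K} = (\Tay_{h(x)}^k g)_{\SJ^k_0\K} \circ (\Tay_x^k h)_{\SJ^k_0\K}$, and this equals $(\Tay_{h(x)}^k g \circ \Tay_x^k h)_{\SJ^k_0\K}$ by the functoriality of scalar extension for polynomial maps without constant term (Appendix C). Corollary \ref{PolyCor}(iii) then yields the chain rule modulo $(\deg > k)$, and since the left-hand side $\Tay_x^k(g \circ h)$ itself has degree at most $k$, the congruence is exactly the announced identity.

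The main obstacle is the identity $(a_{|\Alpha|})_\Alpha(\vvv) = \D^\Alpha_\vvv f(x)$ in the proof of (ii). In characteristic zero it follows at once from the standard formula $\D^\Alpha_\vvv f(x) = \frac{1}{\Alpha!} d^{|\Alpha|} f(x)(\vvv_\Alpha)$, but in arbitrary characteristic one must avoid any division by factorials and rely entirely on the iterative structure of the normalized differentials and the remainder conditions of the radial expansion. Once this identity is secured, the rest of the theorem is formal bookkeeping.
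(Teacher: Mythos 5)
Your proposal is correct, but it inverts the paper's logical order and replaces the central computation. The paper proves (i) first and directly: the implication from jets to Taylor polynomials is obtained by recovering $\Tay_x^k f$ from $\SJ^k_x f$ via the linear maps $\kappa(v)=(v,\ooo)$ and $\alpha(w_1,\ldots,w_k)=\sum_i w_i$, and the converse by comparing the radial expansion of $\phi=f-g$ along $w=v_1+tv_2+\cdots+t^{k-1}v_k$ with the multi-variable expansion and invoking uniqueness (Theorem \ref{UniquenessTaylorExpansion}). Part (ii) then costs nothing: since $P:=\Tay_x^k f$ equals its own Taylor polynomial (Theorem \ref{TaylorPolyTheorem}), part (i) gives $\SJ^k_x f=\SJ^k_0 P$, and Corollary \ref{PolyCor} identifies the latter with the algebraic scalar extension. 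You instead prove (ii) head-on by matching $\delta^j$-coefficients, which forces you through the identity $(a_{|\Alpha|})_\Alpha(\vvv)=\D^\Alpha_{\vvv}f(x)$ relating the multi-homogeneous components of the normalized differentials to the iterated ones; this identity is true, is nowhere stated in the paper, and your sketch of it (iterated radial expansion in the directions $c_jv_j$ with a common parameter $t$, then uniqueness) is exactly the technique of Theorem \ref{NormalizedDifferentialPolynomial}(2) and does go through, provided you also invoke the separation of multi-homogeneous parts (Theorem \ref{ContinuousDecomposition}) to justify ``matching multi-degrees in the $c_j$'' over a base ring where you cannot divide. Your derivations of (i) from (ii) via Corollary \ref{PolyCor}(iii) and of (iii) from the jet chain rule are sound and essentially coincide with the paper's treatment of (iii) (where you appeal to ``scalar extension commutes with composition,'' the paper uses functoriality of $\SJ^k$ plus Corollary \ref{PolyCor}, which amounts to the same thing). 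The trade-off: the paper's reduction is shorter and avoids any new coefficient identity, while your route makes the coefficient-level correspondence explicit and produces, as a by-product, the characteristic-free substitute for the classical formula $\D^\Alpha_{\vvv}f(x)=\tfrac{1}{\Alpha!}d^{|\Alpha|}f(x)(\vvv_\Alpha)$.
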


\begin{proof} 
i), ``$\Leftarrow$'': 
assume $\SJ^k_x f = \SJ^k_x g$, then 
$$
\Tay_x^k f (v) = \sum_{i=1}^k f^{\la i\ra}(x,v,\ooo;\ooo) = \alpha \circ \SJ^k_x f (v,\ooo) 
= \alpha \circ \SJ^k_x f \circ \kappa (v)
$$
where 
$$
\alpha : \SJ^k_0W = W^k \to W, \quad (w_1,\ldots,w_k) \mapsto w_1 + \ldots + w_k
$$ 
and
$$
\kappa : V \to \SJ^k_0 V, \quad v \mapsto (v,\ooo) \, .
$$
It follows that $\SJ^k_x f = \SJ^k_x g$ implies $\Tay^k_x f = \Tay^k_x g$.

\ssk
i), ``$\Rightarrow$": assume that $\Tay_x^k f = \Tay_x^k g$.   Then for
 $\phi:= f-g$ we have $\phi(x)=0$ and  $\Tay_x^{k} \phi = 0$, i.e.,
$$
\forall j=1,\ldots,k, \,  \forall v \in V : \quad \phi^{\la j \ra}(x,v,\ooo;\ooo) = 0 .
$$
In order to prove that $\SJ^k_x \phi =0$, we have to show that $\phi^{\la j \ra}(v_0,\ldots,v_j;\ooo)=0$,
for all $\vvv \in \SJ^k U$.  This is done by  computing
 $\phi(x+tv_1+t^2 v_2 + \ldots + t^k v_k)$ in two different ways, using
first the radial limited expansion, and then the multi-variable radial limited
 expansion:
let $w:=v_1 + t v_2 + \ldots + t^{k-1} v_k$; since $\phi(x)=0$ and $\Tay^j_x \phi =0$ for $j=1,\ldots,k$, we get
\begin{eqnarray*}
\phi(x+tw) & =&
\sum_{j=0}^{k} t^j \phi^{\la j \ra}(x,w,\ooo;\ooo) + t^k (\phi^{\la k \ra }(x,w,\ooo;\ooo,t)- \phi^{\la k \ra }(x,w,\ooo;\ooo))
\cr
&=& t^k (\phi^{\la k \ra }(x,w,\ooo;\ooo,t)- \phi^{\la k \ra }(x,w,\ooo;\ooo)) \, .
\end{eqnarray*}
On the other hand, the multi-variable limited expansion gives, with $x=:v_0$,
$$
\phi(x+tv_1+\ldots + t^k v_k)=
\sum_{j=0}^{k} t^j \phi^{\la j \ra}(v_0,\ldots,v_j;\ooo) +
 t^k (\phi^{\la k \ra }(\vvv ; \ooo,t)-\phi^{\la k \ra }(\vvv ; \ooo)) .
$$
By uniqueness of the radial limited expansion, 
$\phi^{\la j \ra}(v_0,\ldots,v_j;\ooo) = 0$ for $j=1,\ldots,k$.

\ssk
(ii) 
Choose the origin in $V$ such that $x=0$.
Now let $f:U \to W$ be of class $\CC^{[k]}$ and let
$P:=\Tay_0^k f$.
Since $P$ coincides (up to the additive constant $P(0)=0$) with its own Taylor polynomial 
(Theorem \ref{TaylorPolyTheorem}),
it follows that $\Tay_0^k f = \Tay_0^k P$, whence, by (i),
$\SJ^k_0 f = \SJ^k_0 P$, and the latter is $\SJ^k_0\K$-polynomial,  and coincides with its algebraic
scalar extension from $\K$ to $\SJ^k_0 \K$ (Corollary \ref{PolyCor}). 
Note that the homogeneous parts of degree $> k$  vanish, hence $\SJ^k_x f$ is of degree at most $k$.

\ssk
(iii) Let
$R:=\Tay_0^{k}(g \circ h)$,
$P:=\Tay_{h(0)}^{k} g$,
$Q:=\Tay_0^{k}h$.
By (i), $\SJ^k_0 h = \SJ^k_0 Q$ and $\SJ^k_{h(0)}g=\SJ^k_{Q(0)}P$. Using this, and  functoriality of $\SJ^k$, we get
$$
\SJ^k_0R = \SJ^k_0(g \circ h) = \SJ_{h(0)}^k g \circ \SJ^k_0 h = \SJ^k_{Q(0)} P \circ \SJ^k_0 Q =
\SJ^k_0 (P \circ Q),
$$ 
whence, by Corollary \ref{PolyCor}, $R \equiv P \circ Q \mod (\deg > k)$.
\end{proof}

\section{Construction of Weil functors}

\subsection{Weil algebras}
The notion of  Weil algebra has been defined in the introduction (Definition
\ref{WeilalgebraDef}).  Weil algebras form a category:

\begin{definition}
A {\em morphism of Weil $\K$-algebras} is a continuous  $\K$-algebra homomorphism 
$\phi:\bA \to \bB$ preserving the nilpotent ideals:
$\phi(\cN_\bA) \subset \cN_\bB$.
The automorphism group of $\bA$ is denoted by $\Aut_\K(\bA)$.
\end{definition}

\begin{lemma} \label{WeilalgebraLemma}
The canonical projection $\pi^\bA:\bA \to \K$ of a Weil algebra is
continuous, and so is its section $\sigma^\bA:\K \to \bA$, $x \mapsto x\cdot 1$.
The unit group $\bA^\times$ is open and dense in $\bA$, and inversion
$\bA^\times \to \bA$ is continuous. 
%
\end{lemma}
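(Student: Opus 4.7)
The plan is to exploit the decomposition $\bA = \K \oplus \cN$ together with the fact that $\cN$ is a nilpotent ideal of finite rank over $\K$. First I would fix a $\K$-basis $(1, e_1, \ldots, e_n)$ of $\bA$ adapted to this decomposition, identifying $\bA$ with $\K^{n+1}$ as a topological $\K$-module. The multiplication in $\bA$ is then determined in coordinates by structure constants $e_i e_j = \sum_k c_{ij}^k e_k$ and is therefore a polynomial, and hence continuous, map $\bA \times \bA \to \bA$; this already makes $\bA$ into a topological $\K$-algebra. Continuity of $\pi^\bA$ and of $\sigma^\bA$ is then immediate, as they are, respectively, the coordinate projection onto, and the canonical inclusion of, the first factor of the product.

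For the unit group, the key identity is $\bA^\times = (\pi^\bA)^{-1}(\K^\times)$. One inclusion is automatic: applying the ring homomorphism $\pi^\bA$ to $a a^{-1} = 1$ forces $\pi^\bA(a) \in \K^\times$. For the converse, write $a = x + n$ with $x \in \K^\times$ and $n \in \cN$; then $a = x (1 + x^{-1} n)$ with $x^{-1} n$ nilpotent, say $(x^{-1} n)^N = 0$, and the finite geometric series provides the explicit inverse
\[
a^{-1} \;=\; x^{-1} \sum_{k=0}^{N-1} (-x^{-1} n)^k .
\]
Openness of $\bA^\times$ in $\bA$ then follows at once from openness of $\K^\times$ in $\K$ combined with continuity of $\pi^\bA$.

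Density reduces to density of $\K^\times$ in $\K$: every basic neighborhood of $a = x + n$ has the form $V \oplus W$, with $V$ a neighborhood of $x$ in $\K$ and $W$ a neighborhood of $n$ in $\cN$; by density, we can pick $x' \in V \cap \K^\times$, and then $x' + n$ lies both in the given neighborhood and in $\bA^\times$. Finally, continuity of inversion is read off from the same explicit formula: $a \mapsto a^{-1}$ factors as the continuous decomposition $a \mapsto \bigl(\pi^\bA(a),\, a - \sigma^\bA(\pi^\bA(a))\bigr) \in \K^\times \times \cN$, composed with $(x,n) \mapsto x^{-1} \sum_{k=0}^{N-1} (-x^{-1} n)^k$, which is polynomial in $x^{-1}$ and $n$; so it is continuous, using continuity of inversion in $\K$ and of the algebra operations in $\bA$ already established. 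No step presents a serious obstacle; the point worth flagging is simply that nilpotency of $\cN$ truncates the geometric series into an algebraic polynomial, so the whole argument takes place at the level of topological algebra, with no analytic convergence beyond the hypotheses already in force on $\K$.
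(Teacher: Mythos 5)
Your proof is correct and follows essentially the same route as the paper: the same explicit truncated geometric series for $(x+y)^{-1}$, the same identification $\bA^\times = \K^\times \times \cN$, and the same reduction of openness, density, and continuity of inversion to the corresponding properties of $\K^\times$ in $\K$. The only addition is that you spell out continuity of multiplication via structure constants, which the paper leaves implicit.
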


\begin{proof}
Recall first that every element of the group $\Gl(n+1,\K)$ acts by homeo\-morphisms on
$\K^{n+1}$ (with product topology), hence the topology on $\bA = \K \oplus \cN$ is indeed independent
of the chosen $\K$-basis. The continuity of $\pi^{\bA}$ and of $\sigma^{\bA}$ is then clear.

An element $x + y \in \bA = \K \oplus \cN$ is invertible if, and only if, $x$ is invertible
in $\K$: indeed, its  inverse is given by 
$$
(x+y)\inv = x\inv \sum_{j=0}^r  (-1)^j (x\inv y)^j,
$$
where $r \in \N$ is such that $\cN^r = 0$.  Hence
$\bA^\times = \K^\times \times \cN$ is open and dense in $\bA$, and inversion is seen to be
continuous since inversion in $\K$ is continuous.
\end{proof}

\begin{example}
The {\em iterated tangent rings} and the {\em jet rings}, 
$$
T^k \K \cong \K[X_1,\ldots,X_k]/(X_1^2,\ldots, X_k^2) \cong
\K \oplus \bigoplus_{\alpha \in \{ 0, 1 \}^k, \alpha\not= 0} \eps^\alpha \K \, ,
$$
$$
\SJ^k \K \cong \K[X]/(X^{k+1}) \cong \K \oplus \bigoplus_{j=1}^k \delta^j \K \, ,
$$
(cf.\ (\ref{JetRings}))  are Weil $\K$-algebras. 
Note that the permutations of the elements $\eps^\alpha$, induced by the permutation
group $S_k$,  define natural Weil algebra automorphisms of $T^k \K$. 
The {\em canonical $\K^\times$-action} (Appendix A) on $T^k \K$ and on $\SJ^k \K$
is also by automorphisms. 
More generally, the following truncated polynomial algebras in several variables
are Weil algebras:
$$
\bW^r_k(\K):= \K[X_1,\ldots,X_k]/I_r
$$
where $I := I_0 := \langle X_1,\ldots,X_k \rangle$ is the ideal generated by all linear
forms and $I_r := I^{r+1}$ is the ideal of all polynomials of degree greater than $r$.
This is indeed a Weil algebra:
 as a $\K$-module, this quotient is the space of polynomials of degree at most
$r$ in $k$ variables, which is free. For $k=1$, we have
$\bW^r_1(\K)=\SJ^r \K$, in particular, $\bW^1_1 = T\K$. 
If $\bA$ is any Weil algebra, and $a_1,\ldots,a_n$ a 
$\K$-basis of $\cN$, then, if $\cN^{r+1} = 0$,
$$
\bW^r_{n}(\K) \to \bA = \K \oplus \cN, \quad  P \mapsto \bigl(P(\ooo),P(a_1,\ldots,a_n) \bigr)
$$
is well-defined and defines a surjective algebra homomorphism.  
Thus every Weil algebra is a certain quotient of an algebra $\bW^r_n(\K)$.
If $\K$ is a field, then such a representation with minimal $r$ and $n$
 is in a certain sense unique, with
$n = \dim (\cN / \cN^2)$ and $r$ the smallest integer with $\cN^{r+1}=0$
(see \cite{Kolar}, Sections 1.5 -- 1.7 for the real case; the arguments carry over to a general
field), but if $\K$ is not a field, this will no longer hold (for instance, $\K$ itself may then be
a Weil algebra over some other field or ring).  
It goes without saying that a ``classification'' of Weil algebras is completely out
of reach. 
\end{example}

\begin{lemma}\label{WhitneyLemma1}
Let $\bA = \K \oplus \cN$ and $\bB = \K \oplus \cM$ be two Weil algebras over $\K$.
\begin{enumerate}
\item
The tensor product $\bA \otimes \bB$ (where $\otimes = \otimes_\K$) 
is a Weil algebra over $\K$, with decomposition
$$
\bA \otimes \bB = \K \oplus (\cN \oplus \cM \oplus \cN \otimes \cM) \, .
$$
\item
The ``Whitney sum'' $\bA \oplus_\K \bB:=\bA \otimes \bB / \cN \otimes \cM$ 
is a Weil algebra over $\K$, with decomposition
$$
\bA \oplus_\K \bB \cong  \K \oplus (\cN \oplus \cM)
$$
\item
Both constructions are related by the following ``distributive law''
$$
\bA \otimes_\K \bigl( \bB \oplus \bB' \bigr) \cong
\bigl( \bA \otimes_\K \bB \bigr) \oplus_\bA  \bigl( \bA \otimes_\K \bB' \bigr)  \, .
$$
\end{enumerate}
\end{lemma}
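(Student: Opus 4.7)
My plan is to treat the three parts in order, each reducing to elementary calculations with the canonical decomposition $\bA = \K\oplus\cN$, $\bB = \K\oplus\cM$.

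For (1), I will start from the $\K$-module decomposition
\[
\bA\otimes_\K\bB \;=\; (\K\oplus\cN)\otimes_\K(\K\oplus\cM) \;\cong\; \K \;\oplus\; \cN \;\oplus\; \cM \;\oplus\; (\cN\otimes_\K\cM),
\]
identifying $\K$ with $\K\otimes\K$. The summand $\cN\oplus\cM\oplus\cN\otimes\cM$ is then the kernel of the augmentation $\bA\otimes\bB\to\K\otimes\K=\K$ induced by $\pi^\bA\otimes\pi^\bB$, so it is an ideal; I will check directly that it is nilpotent by observing that if $\cN^p=0$ and $\cM^q=0$, then any product of $p+q$ elements of this ideal, expanded by bilinearity, contains either a factor $n_1\cdots n_p\in\cN^p$ in the first tensor slot or $m_1\cdots m_q\in\cM^q$ in the second, hence vanishes. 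Freeness and finite-dimensionality of $\cN\otimes_\K\cM$ follow from the freeness of $\cN$ and $\cM$ (a tensor product of finite free modules over a commutative ring is free with basis the tensor products of basis elements). This gives the required Weil-algebra structure on $\bA\otimes\bB$, and the topology is the product topology coming from any $\K$-basis, which is well-defined by the same argument as in Lemma \ref{WeilalgebraLemma}.

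For (2), I will verify that $\cN\otimes_\K\cM$ is itself an ideal in $\bA\otimes\bB$: indeed, for $n\in\cN$, $m\in\cM$, $x,y\in\K$, $n'\in\cN$, $m'\in\cM$, we have
\[
(x+n'\otimes 1 + 1\otimes m' + n''\otimes m'')\cdot(n\otimes m) \;\in\; \cN\otimes_\K\cM,
\]
since each term produces a tensor with a $\cN$-factor on the left and a $\cM$-factor on the right (using $\cN\cdot\cN\subset\cN$ and $\cM\cdot\cM\subset\cM$). The quotient $\bA\oplus_\K\bB$ therefore inherits a $\K$-algebra structure, and the induced $\K$-module decomposition is $\K\oplus(\cN\oplus\cM)$; the remaining ideal $\cN\oplus\cM$ is nilpotent (it is the image of a nilpotent ideal), free and finite-dimensional. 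The section $\K\to\bA\oplus_\K\bB$ is again $x\mapsto x\cdot 1$, so $\bA\oplus_\K\bB$ is a Weil $\K$-algebra.

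For (3), the plan is a direct computation. Writing $\bB=\K\oplus\cM$, $\bB'=\K\oplus\cM'$, one has $\bB\oplus_\K\bB'\cong\K\oplus\cM\oplus\cM'$ by (2). Tensoring with $\bA$ and using (1) gives
\[
\bA\otimes_\K(\bB\oplus_\K\bB') \;\cong\; \K\oplus\cN\oplus\cM\oplus\cM'\oplus(\cN\otimes\cM)\oplus(\cN\otimes\cM').
\]
On the other hand, $\bA\otimes_\K\bB$ and $\bA\otimes_\K\bB'$ are Weil $\bA$-algebras with nilpotent parts $\cM\oplus(\cN\otimes\cM)$ and $\cM'\oplus(\cN\otimes\cM')$ respectively (viewed over $\bA$), so their $\bA$-Whitney sum is, by (2) applied over the base ring $\bA$,
\[
\bA\oplus(\cM\oplus\cN\otimes\cM)\oplus(\cM'\oplus\cN\otimes\cM') \;\cong\; \K\oplus\cN\oplus\cM\oplus\cM'\oplus(\cN\otimes\cM)\oplus(\cN\otimes\cM').
\]
The two descriptions agree as $\K$-modules; I will finish by checking that the identification is an isomorphism of algebras, which amounts to verifying that the multiplication tables of the two sides coincide on basis vectors, and this reduces to the bilinearity of the tensor product together with the definitions of the two constructions.

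The only mildly delicate point is making sure the ``Weil ideal'' (complement of $\K\cdot 1$) really is nilpotent in the tensor product; everything else is bookkeeping. I do not anticipate a genuine obstacle beyond keeping the identifications consistent.
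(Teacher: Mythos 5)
Your proposal is correct and takes essentially the same route as the paper's (much terser) proof: the same chain of ideals $\cN\otimes\cM \subset (\cN\oplus\cM\oplus\cN\otimes\cM) \subset \bA\otimes\bB$, with the nilpotency check (your pigeonhole argument on $p+q$ factors), the ideal verifications, and the freeness/topology remarks simply spelled out where the paper calls them ``standard isomorphisms on the algebraic level''.
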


\begin{proof}
The tensor product of two commutative algebras is again a commutative algebra,
and we have a chain of ideals
$\cN \otimes \cM \subset (\cN \oplus \cM \oplus \cN \otimes \cM) \subset \bA \otimes \bB$.
Since $\bA \otimes \bB$ is again free and finite-dimensional over $\K$, the product
topology is canonically defined on $\bA \otimes \bB$ and on the respective quotients.
The given decompositions are standard isomorphisms on the algebraic level, 
and by the preceding remarks  they are also homeomorphisms.
\end{proof}

\begin{example}
The tensor product
$T^k\K\otimes T^\ell \K$ is naturally isomorphic to $T^{k+\ell} \K$.
The direct sum
$T\K \oplus_\K \ldots \oplus_\K T\K$ ($n$ factors) is naturally isomorphic to
$\bW^1_n(\K)$ (the Weil algebra of ``$n$-velocities'').
The Weil algebra $T^k\K$ is a quotient of $\bW^1_k(\K)$. 
\end{example}

\subsection{Extended domains}
As a first step towards the definition of Weil functors, we have to define the
{\em extended domains} of open sets $U$ in a topological $\K$-module $V$.
The algebraic scalar extension $T^\bA V:=V_\bA:= V \otimes \bA$
decomposes as
$$
V_\bA = V \otimes (\K \oplus \cN) = V \oplus (V\otimes \cN) = V \oplus V_\cN,
$$
and, if $\cN$ is homeomorphic to $\K^n$ with respect to a $\K$-basis of $\cN$,
then $V_\cN$ is isomorphic, as topological $\K$-module, to $V^n$ with product
topology. The canonical projection and its section,
$$
\pi_V:=\pi^\bA_V: V_\bA =V \oplus V_\cN \to V,  \quad \quad
\sigma_V{:=\sigma^\bA_V}:V \to V \oplus V_\cN
$$
are continuous. More generally, for any non-empty subset $U \subset V$ we define
the {\em $\bA$-extended domain} to be
$$
T^\bA U := (\pi_V)\inv(U) = U \times V_\cN \subset V_\bA .
$$
For any $x \in U$, the set $T^\bA_xU:=T^\bA(\{ x \}) \cong V_\cN  \subset T^\bA U$ is called
the {\em fiber over $x$}. 

\ssk
Let $P:V \to W$ be a $\K$-polynomial map, of degree at most $k$. Let 
$P_\bA:V_\bA \to W_\bA$ be its scalar extension from $\K$ to $\bA$
and
$P_\cN:V_\cN \to W_\cN$ be its scalar extension from $\K$ to $\cN$. 
That is, if $P=\sum_{i=0}^k P_i$ with $P_i$ homogeneous of degree $i$, then
$$
P_\bA (v \otimes a) = \sum_{i=0}^k (P_i)_\bA (v \otimes a) =  \sum_{i=0}^k P_i (v) \otimes a^i \, 
$$
(cf.\ Appendix C).
Then $P_\bA$ {\em extends} $P$ in the sense that
$P_\bA(v \otimes 1)=P(v) \otimes 1$, {\it i.e.}, 
$$
P_\bA \circ \sigma_V = \sigma_W \circ P .
$$
Note that we have also
$P \circ \pi_V = \pi_W \circ P_\bA$. In the same way, we define
$P_\cN$; mind that there is no commutative diagram of sections, 
 as there is no natural section $\K\to\cN$.

\subsection{Construction of Weil functors}\label{WeilFunctor}
The following main result generalizes Theorem \ref{SimplicialExtensionTheorem}
  from $\SJ^k \K$ to the case of an
arbitrary Weil algebra $\bA$:

\begin{theorem}{\rm (Existence und uniqueness of Weil functors)}
\label{WeilFunctorTheoremExistence}
 Let $f:U \to W$ be of class $\CC^{[\infty]}$ over $\K$ and $\bA$ a
Weil $\K$-algebra. Then $f$ extends to an $\bA$-smooth map: there exists a map  $T^\bA f:T^\bA U\to T^\bA W$ such that:
\begin{enumerate}
\item
$T^\bA f$ is of class $\CC^{[\infty]}_\bA$,
\item 
$T^\bA f \circ \sigma_U = \sigma_W \circ f$, i.e.,
$T^\bA f (x,\ooo)=(f(x),\ooo)$ for all $x \in U$,
\item $\pi_W \circ T^\bA f = f \circ \pi_U$.
\end{enumerate}
The map  $T^\bA f$ is uniquely determined by properties (1) and (2): if
 $F:T^\bA U\to T^\bA W$ is  of class $\CC^{[\infty]}_\bA$ and such that
$F \circ \sigma_U = \sigma_W \circ f$, then
$F = T^\bA f$.
More generally, 
any $\bA$-smooth map $F:T^\bA U\to T^\bA W$ is entirely determined by
its values on the base $\sigma_U(U)$.
\end{theorem}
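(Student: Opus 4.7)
The plan is to build the existence part by the Taylor-polynomial strategy already sketched in the introduction. Let $r$ be the nilpotency index of $\cN$, so $\cN^{r+1}=0$, pick any $k\ge r$, and for each $x\in U$ define
$$
T^\bA_x f \ := \ (\Tay_x^{k} f)_{\cN}\, :\, V_\cN\to W_\cN,
$$
the algebraic scalar extension of the Taylor polynomial from $\K$ to the nilpotent part $\cN$ of $\bA$. By Theorem \ref{TaylorPolyTheorem}, $\Tay_x^k f$ is a polynomial of degree at most $k$ without constant term, and upon extension every homogeneous component of degree $>r$ vanishes on $V_\cN$ because $\cN^{r+1}=0$; hence the definition is independent of the chosen $k\ge r$. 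Then set
$$
T^\bA f:T^\bA U=U\times V_\cN \longrightarrow T^\bA W= W\times W_\cN,\quad (x,y)\longmapsto\bigl(f(x),\,T^\bA_x f(y)\bigr).
$$
Property (2) holds because $T^\bA_x f(\ooo)=\ooo$, and (3) follows from the definition.

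The next step is to verify (1), i.e.\ that $T^\bA f$ is $\bA$-smooth and not only $\K$-smooth. I would decompose $T^\bA_x f(y)=\sum_{i=1}^r (\D^i_\bullet f(x))_\cN(y)$ into its homogeneous pieces via Corollary \ref{DiffSimpPolynomial}, and then appeal to the continuous-polynomial and scalar-extension results of Appendix C (in particular Theorem \ref{PolynomialScalarExtensionSmooth}) applied to the family of $\K$-smooth polynomial coefficients $(x,v)\mapsto \D^i_v f(x)$. Functoriality $T^\bA(g\circ h)=T^\bA g\circ T^\bA h$ will then drop out of the chain rule for Taylor polynomials (Theorem \ref{TaylorTheorem}(iii)), since the truncation $\mod(\deg>k)$ is killed when we scalar-extend to $\cN$ for $k\ge r$.

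For uniqueness I would closely follow the scheme of Theorem \ref{SimplicialExtensionTheorem}. Let $F:T^\bA U\to T^\bA W$ be $\bA$-smooth with $F\circ\sigma_U=\sigma_W\circ f$, and fix $(x,y)\in T^\bA U$. Choose a $\K$-basis $n_1,\dots,n_m$ of $\cN$ and write $y=\sum_{i=1}^m n_i\cdot\sigma_V(y_i)$ with $y_i\in V$, using the $\bA$-module structure on $V_\bA$. Each $n_i\in\cN\subset\bA$ is a legitimate $\bA$-scalar, and every product of $r+1$ elements of $\cN$ vanishes in $\bA$. Iterating the multi-variable radial limited expansion of the $\bA$-smooth map $F$, with scalar parameter $n_i$ taken along the direction $\sigma_V(y_i)$, at an order large enough that every surviving monomial in the $n_i$ would require at least $r+1$ factors from $\cN$, forces every remainder term to vanish identically. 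What remains is an explicit finite $\bA$-linear combination of the simplicial differentials $F^{\la j\ra}(\sigma_V(x),\sigma_V(u_1),\dots,\sigma_V(u_j);\ooo)$ evaluated on the base. By hypothesis (2) and functoriality of $\SJ^k$, each of these equals $\sigma_W\bigl(f^{\la j\ra}(x,u_1,\dots,u_j;\ooo)\bigr)$ and is therefore determined by $f$ alone. This proves the final statement of the theorem; comparing the resulting formula with the existence construction then forces $F=T^\bA f$, giving uniqueness.

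The main obstacle will be the $\bA$-smoothness check in step 2: one is asking a polynomial assembled from $\K$-smooth coefficients to be smooth over the strictly \emph{larger} base ring $\bA$, which is not automatic and must be extracted from the machinery of Appendix C on continuous polynomial maps and their scalar extensions. The uniqueness argument, although conceptually parallel to the simplicial case, also requires careful bookkeeping in the iterated expansion — one performs the nested radial expansions along the $\sigma_V(y_i)$'s in a fixed order and verifies that after sufficiently many nestings every surviving product of the $n_i$'s lies in $\cN^{r+1}=0$, which is where the nilpotency of the ideal $\cN$ (rather than of an individual element) is used in an essential way.
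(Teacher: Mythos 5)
Your construction of $T^\bA f$, the verification of (2) and (3), the continuity argument via Appendix C, the derivation of functoriality from the chain rule for Taylor polynomials, and the uniqueness argument by nested radial expansions with nilpotent scalars all match the paper's proof in substance. The uniqueness part is essentially the paper's argument: the paper likewise writes a point of $T^\bA U$ as $x+\sum_i a_i v_i$, expands $F$ radially with the $a_i$ as scalars, observes that no remainder survives by nilpotency, and notes that the normalized differentials $\D^\Alpha_{\vvv}F(x)$ are determined by the values of $F$ on the base because the underlying difference quotients (for non-singular $\K$-valued time parameters) only evaluate $F$ at points of $U\subset V\otimes 1$, followed by a density argument.

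However, there is a genuine gap exactly where you flag ``the main obstacle'': you never actually prove that $T^\bA f$ is of class $\CC^{[\infty]}_\bA$, and the machinery you invoke cannot deliver it. Theorem \ref{PolynomialScalarExtensionSmooth} together with Theorem \ref{PolContinousSmooth} gives $\bA$-smoothness of each \emph{fiber} map $y\mapsto T^\bA_x f(y)$ and (via Remark \ref{ContinuousRemark}) joint \emph{continuity} in $(x,y)$; but $T^\bA f$ is not globally an $\bA$-polynomial map --- the dependence on the base point $x$ is only $\K$-smooth a priori --- so Appendix C says nothing about difference quotients of $T^\bA f$ taken in directions transverse to the fibers with scalars $t\in\bA$. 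The missing idea is the one the paper uses: having already established that $T^\bA$ is a product-preserving functor on $\K$-smooth maps, one observes that the defining relation of the class $\CC^{[1]}_\K$ for $f$ (namely $f(x+tv)-f(x)=t\,f^{[1]}(x,v,t)$ with $f^{[1]}$ continuous) is a diagram built from products, so applying $T^\bA$ to it yields the identity $(T^\bA f)^{]1[,\bA}=T^\bA\bigl(f^{]1[,\K}\bigr)$; since $f^{[1]}$ is again $\K$-smooth, $T^\bA(f^{[1],\K})$ is continuous by the continuity step already proved, hence furnishes the required continuous extension over $\bA$, and induction gives $\CC^{[\infty]}_\bA$. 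Note also that this forces the order of the proof: functoriality and product preservation must be established \emph{before} $\bA$-smoothness, not merely listed as a consequence of the chain rule afterwards. Without this step the existence half of the theorem is not proved.
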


\begin{proof}
Let $f:U \to W$ be of class $\CC^{[k]}$. 
Assume $\bA = \K \oplus \cN$ is a Weil algebra with $\cN$ nilpotent of order $k+1$. 
For all $x \in U$, define
$$
T^\bA_x f := (\Tay_x^{k}f)_\cN : V_\cN \to W_\cN
$$
to be the scalar extension from $\K$ to $\cN$ of the Taylor polynomial $\Tay_x^k f$,
and let
$$
T^\bA f : T^\bA U \to T^\bA W, \quad
(x,z) \mapsto \bigl( f(x), T^\bA_x f (z) \bigr) \, .
$$
It satisfies property (3). Since $T_x^\bA f$ is  polynomial without constant term,
 property (2) of the theorem  is fulfilled. 
In order to prove property (1), we prove first that $T^\bA f$ is continuous:
first of all, according to Theorem \ref{PolynomialScalarExtensionSmooth} (Appendix C),
since $P:=\Tay_x^k f :V \to W$ is a continuous polynomial, its scalar extension
$P_\cN : V_\cN \to W_\cN, z\mapsto (\Tay_x^k f)_\cN(z)$ is continuous.
 By direct inspection of the proof of
Theorem \ref{PolynomialScalarExtensionSmooth}, one sees that the dependence on $x$
is also continuous, i.e.,
$(x,z) \mapsto  (\Tay_x^{k}f)_\cN(z)$ is again continuous, and hence
$(x,y) \mapsto T^\bA f (x,y)$ is continuous (cf.\ Remark \ref{ContinuousRemark}).

\ssk
Next we prove the functoriality rule $T^\bA (f \circ g)=
T^\bA f \circ T^\bA g$. For this, we use the ``chain rule for Taylor polynomials'' 
(Theorem \ref{TaylorTheorem}, Part (iii)),  together with nilpotency of $\cN$ and
the fact that, if $P$ is a polynomial containing only terms of degree $>k$, then
$P_\cN = 0$ by nilpotency. From this we get
\begin{eqnarray*}
T_x^\bA(g \circ f) & = &
(\Tay_x^k (g \circ f))_\cN \cr
&=& \bigl( \Tay_{f(x)}^k g \circ \Tay_x^k f \mod (\deg > k) \bigl)_\cN \cr
&=& \bigl( \Tay_{f(x)}^k g \circ \Tay_x^k f \bigl)_\cN \cr
&=& \bigl( \Tay_{f(x)}^k g \bigl)_\cN  \circ \bigr( \Tay_x^k f \bigl)_\cN \cr
&=& T_{f(x)}^\bA g \circ T_x^\bA f \, .
\end{eqnarray*}
Thus $T^\bA$ is a functor. It is obviously product preserving in the sense that
$T^\bA (f \times g)= T^\bA f \times T^\bA g$. 

\ssk
Now we can prove that $T^\bA f$ is smooth over $\bA$. In fact,
all arguments used for the proof of   \cite{Be08}, Theorem 6.2 (see also \cite{Be10}, Theorems 3.6,  3.7)
apply: $T^\bA (\K)=\K\otimes\bA$ is again a ring, and this ring is canonically isomorphic to $\bA$ itself;
 the conditions defining the class $\CC^{[1]}$ over $\K$ can be defined by a commutative diagram
invoking direct products, hence, applying a product preserving functor yields the same kind of
diagram over the ring $\bA= T^\bA \K$. One gets that
$$
(T^\bA f)^{]1[,\bA} = T^\bA (f^{]1[,\K}) .
$$
Since $f^{[1]}$ is smooth, $T^\bA (f^{[1],\K})$ is continuous by the preceding steps of the 
proof, hence $(T^\bA f)^{]1[,\bA}$ admits a continuous extension $(T^\bA f)^{[1],\bA}=T^\bA (f^{[1],\K})$, proving that
$T^\bA f$ is $\CC^{[1]}$ over $\bA$. By
induction, $f$  is then actually $\CC^{[\infty]}$ over $\bA$. 
 This proves the existence statement.
\ssk

Uniqueness is proved by adapting the method used in the proof 
of Theorem \ref{SimplicialExtensionTheorem}: 
fix a $\K$-basis $(1=a_0,a_1,\ldots,a_n)$  of $\bA$ and write an element of
$T^\bA U$ in the form $x+\sum_{i=1}^n a_iv_i$ with $x \in U$ and $v_i \in V$.
For $F = T^\bA f$, we develop in a similar way as in the proof of Theorem
\ref{NormalizedDifferentialPolynomial}, replacing the scalar $t^i$ by $a_i$
($i=1,\ldots,n$) and taking $k+1$-th order radial expansions: 
$$
F\bigl(x+\sum_{i=1}^n a_iv_i\bigr)=
F(x)+\sum_{\ooo\neq\Alpha\in\N^n}\aaa^\Alpha \D^\Alpha_{\vvv} F(x),
$$
where, as in the proof of Theorem  \ref{SimplicialExtensionTheorem}, no remainder term
appears, because of the nilpotency of $a_1,\ldots,a_n$.  
Since $x \in U$, we have by assumption $F(x)=f(x)$, and since all $v_i \in V$, as in the proof of  
Theorem \ref{SimplicialExtensionTheorem}, 
it follows 
that $\D^\Alpha_{\vvv} F(x) = \D^\Alpha_{\vvv} f(x)$, hence 
$T^\bA f$ is determined by its values on the base.
In the same way, we can develop any $T^\bA \K$-smooth map $F$, thus proving that
$F$ is determined by its values on the base $U$. 
\end{proof}

\section{Weil functors as bundle functors on manifolds}

\subsection{Manifolds}
Next we state the manifold-version of the preceding result. In order to fix
 terminology, let us recall  the definition of smooth manifolds:

\begin{definition}\label{ManifoldDefinition} 
Let $V$ be a topological $\K$-module, called the {\em model space} of the manifold.
A {\em (smooth) $\K$-manifold (with atlas, and modelled on $V$)} is a pair $(M,{\mathcal A})$, where $M$
is a topological space and $\mathcal A$ is a {\em $\K$-atlas of $M$}, i.e., an open covering
$(U_i)_{i\in I}$ of $M$, together with bijections $\phi_i:U_i \to V_i:=\phi_i(U_i)$ onto open subsets
$V_i \subset V$, such that the {\em chart changes}
$\phi_{ij}:=\phi_i\circ{\phi_j^{-1}}_{|\phi_j(V_{ji})}:V_{ji}\to V_{ij}$ are of class $\CC^{[\infty]}_\K$, where $V_{ij}:=\phi_i(U_i\cap U_j)$. Then
$\K$-smooth maps between manifolds are  defined in the usual way.
\end{definition}

For our purposes, it will be useful to assume always that a manifold is given {\em with atlas}
(maximal or not). 
The category of $\K$-manifolds will be denoted by $\Man_\K$. 
If $\bA$ is a Weil $\K$-algebra, then the category $\Man_\bA$ of smooth $\bA$-manifolds  is  well-defined.

\begin{theorem}{\rm (Weil functors on manifolds)}
\label{WeilFunctorTheoremManifold}
\begin{enumerate}
\item
There is a  unique functor $T^\bA:\Man_\K \to \Man_\bA$, which
coincides on open subsets $U$ of topological $\K$-modules with the assignment
$U \mapsto T^\bA U$, $f \mapsto T^\bA f$ described in Theorem \ref{WeilFunctorTheoremExistence}.
 Moreover, this functor is product preserving.
\item
The construction from (1) is functorial in $\bA$: 
if $\Phi:\bA \to \bB$ is a  morphism of Weil $\K$-algebras, then this defines canonically and
in a functorial way for all $\K$-manifolds $M$ a smooth map
$\Phi_M : T^\bA M \to T^\bB M$ such that, for all $\K$-smooth maps $f:M\to N$, we have
$$
T^\bB f \circ \Phi_M = \Phi_N \circ T^\bA f.
$$
\end{enumerate}
\end{theorem}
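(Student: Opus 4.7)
My strategy is to construct $T^\bA M$ by applying the open-set construction of Theorem~\ref{WeilFunctorTheoremExistence} chart by chart and gluing the pieces; the bulk of the analytic content (smoothness over $\bA$, uniqueness of extensions) has already been established there, so the remaining work is essentially organizational.

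For part (1), fix a $\K$-atlas $(U_i,\phi_i)_{i \in I}$ of $M$ with $\phi_i(U_i) = V_i \subset V$ open and $\K$-smooth chart changes $\phi_{ij} : V_{ji} \to V_{ij}$. Applying Theorem~\ref{WeilFunctorTheoremExistence} to each $\phi_{ij}$ yields $\bA$-smooth maps $T^\bA \phi_{ij} : T^\bA V_{ji} \to T^\bA V_{ij}$ between open subsets of $V_\bA$. Functoriality of $T^\bA$ on open sets, also proved in that theorem, gives $T^\bA \phi_{ii} = \id$ and the cocycle relation $T^\bA \phi_{ij} \circ T^\bA \phi_{jk} = T^\bA \phi_{ik}$ on triple overlaps; in particular each $T^\bA \phi_{ij}$ is a $\bA$-diffeomorphism. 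Thus the $T^\bA V_i$ can be glued along these diffeomorphisms into a topological space $T^\bA M$ carrying a canonical smooth $\bA$-atlas $(T^\bA U_i, T^\bA \phi_i)_{i \in I}$. For $\K$-smooth $f: M \to N$, represented in charts by maps $f_{ji}$, the local pieces $T^\bA f_{ji}$ glue by the same functoriality argument, producing an $\bA$-smooth map $T^\bA f : T^\bA M \to T^\bA N$. Product preservation is inherited from the open-set case by choosing product atlases, and uniqueness of the functor follows from the uniqueness clause of Theorem~\ref{WeilFunctorTheoremExistence} applied chart by chart.

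For part (2), a morphism $\Phi: \bA \to \bB$ induces, for every topological $\K$-module $V$, a continuous $\K$-linear map
$$\Phi_V := \id_V \otimes_\K \Phi : V_\bA \to V_\bB,$$
which, because $\Phi(\cN_\bA) \subset \cN_\bB$, restricts on any open $U \subset V$ to a map $\Phi_U : T^\bA U \to T^\bB U$ that is the identity on the base $\sigma_U(U) \cong U$. The local naturality I need is
$$T^\bB f \circ \Phi_U = \Phi_W \circ T^\bA f$$
for every $\K$-smooth $f : U \to W$. Expanding both sides via the formula $T^\bA_x f = (\Tay_x^{k}f)_{\cN_\bA}$ from the proof of Theorem~\ref{WeilFunctorTheoremExistence}, this reduces to the purely algebraic fact that scalar extension of polynomials is natural in the base algebra: for any $\K$-polynomial $P = \sum_i P_i$ and any $\K$-algebra map $\Phi$,
$$P_\bB \circ \Phi_V = \Phi_W \circ P_\bA,$$
verified on homogeneous components $v \otimes a^i \mapsto P_i(v) \otimes \Phi(a)^i$ using multiplicativity of $\Phi$. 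The local naturality ensures compatibility of the $\Phi_U$'s with the chart changes $T^\bA \phi_{ij}$ and $T^\bB \phi_{ij}$, so they glue to a smooth global map $\Phi_M : T^\bA M \to T^\bB M$. The identity $T^\bB f \circ \Phi_M = \Phi_N \circ T^\bA f$ then follows chart by chart from the local identity, and functoriality in $\Phi$, namely $(\Psi \circ \Phi)_M = \Psi_M \circ \Phi_M$, reduces to the corresponding fact on tensor products of modules.

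The main obstacle is not conceptual: all hard analytic work has been done in Theorem~\ref{WeilFunctorTheoremExistence}. What requires care is the verification that the local maps so defined are compatible with the chart changes (both for $T^\bA f$ and for the natural transformations $\Phi_M$), and this compatibility is in each case a direct consequence of the functoriality established on open subsets plus, for part (2), of the elementary naturality of scalar extension of polynomials under $\K$-algebra homomorphisms.
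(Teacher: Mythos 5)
Your proposal is correct and follows essentially the same route as the paper: part (1) is the chart-by-chart application of Theorem \ref{WeilFunctorTheoremExistence} with gluing via the cocycle relations supplied by functoriality, and part (2) rests on the map $\id_V\otimes\Phi$ together with the naturality of polynomial scalar extension under $\K$-algebra homomorphisms ($P_\bB\circ\Phi_V=\Phi_W\circ P_\bA$), which is exactly the paper's argument. Your added remark that $\Phi(\cN_\bA)\subset\cN_\bB$ is what guarantees $\Phi_U$ maps $T^\bA U$ into $T^\bB U$ is a useful explicit detail the paper leaves implicit.
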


\begin{proof}
Recall that a manifold is equivalently given by the following data:
\begin{itemize}
\item a topological $\K$-module $V$ (the model space),
\item open sets $(V_{ij})_{i,j\in I}\subset V$, 
where $I$ is a discrete index set,
\item $\K$-smooth maps $(\phi_{ij})_{i,j\in I}$ (``chart changes'') satisfying the cocycle relations:
$$
\phi_{ii}=\id \ \mbox{and} \ \phi_{ij}\phi_{jk}=\phi_{ik} \ \mbox{ (where defined)}.
$$
\end{itemize}
We may then define the $\K$-manifold $M$ to be the set of equivalence classes $M:=S/\sim$, where 
$S:=\{(i,x)|x\in V_{ii}\}\subset I\times V$ and $(i,x)\sim(j,y)$ if and only if $\phi_{ij}(y)=x$, 
equipped with the quotient topology.
Conversely, we put 
$V_i:=V_{ii}$, $U_i:=\{[(i,x)],x\in V_i\}\subset M$ and $\phi_i:U_i\to V_i, [(i,x)]\mapsto x$ to recover the previous data.

Now we prove the existence statement in (1).
The functor $T^\bA$ associates to the topological $\K$-module $V$, to the 
open sets $V_{ij}\subset V$ and to the $\K$-smooth maps $\phi_{ij}$,
 the topological $\bA$-module $T^\bA V$, the open sets  $T^\bA V_{ij}\subset T^\bA V$ 
 and the $\bA$-smooth maps $T^\bA\phi_{ij}$. If $M$ is a $\K$-manifold  with model  $V$ 
 and atlas  $(V_{ij},\phi_{ij})$, then  $T^\bA V$ is a model and  $(T^\bA V_{ij},T^\bA\phi_{ij})$ 
 is an atlas of $\bA$-manifold. With those data, we have seen that we can construct 
 an $\bA$-manifold, denoted by $T^\bA M$. 

The proof of the uniqueness statement in (1) is obvious, since a manifold is entirely 
given by its model, charts domains and chart changes.

\ssk
(2) For open $U \subset V$ we define $\Phi_U : V \otimes \bA \supset T^\bA U \to V \otimes \bB$,
$v \otimes a \mapsto v \otimes \Phi(a)$. This is a $\K$-linear 
and continuous map, hence
 $\K$-smooth.
In particular, the collection of maps $\Phi_U:T^\bA U \to T^\bB U$ for chart domains $U$ defines
a well-defined smooth map $\Phi_M:T^\bA M \to T^\bB M$. 

Since $\Phi_V$ commutes with
scalar extension of polynomials ($P_\bB \circ \Phi_V = \Phi_W \circ P_\bA$),
it also commutes with extended maps, i.e.,
$T^\bB f \circ \Phi_M = \Phi_N \circ T^\bA f$.
\end{proof}

\begin{remark}
If $\Phi:\bA \to \bB$ is as in (2), then any $\bB$-module becomes an $\bA$-module by
$r. v := \Phi(r).v$. In this way, the target manifold $T^\bB M$ can also be seen as
a smooth manifold over $\bA$, in such a way that  $\Phi_M$ becomes $\bA$-smooth. 
This remark will be important for further developments in differential geometry  
(subsequent work).
\end{remark}

\subsection{Weil functors: bundle point of view}
Next we state the bundle version of the main theorem, and we give the formulation of 
certain operations
on Weil bundles in terms of their Weil algebras. 
The  precise definitions of notions related to 
bundles are explained after the statement of the results.

\begin{theorem} [Weil functors as bundle functors]
\label{WeilFunctorTheoremBundle}
Let $M \in \Man_\K$ modelled on $V$ and $\bA = \K \oplus \cN$ a Weil algebra such
that $\cN$ is nilpotent of order $k+1$. Then
\begin{enumerate}
\item
$T^\bA M$ is a $(\bA,\K)$-smooth polynomial bundle of degree $k$ with section  over $M$
and with  fiber modelled on $V_\cN = V \otimes_\K \cN$. 
More precisely, the chart changes are polynomial in fibers
of degree $k$ and without constant term.
In particular, if  $\cN^2 = 0$, then $T^\bA M$ is a  vector bundle over $M$.
\item
$T^\bA:\Man_\K \to \SBun^\bA_\K$ is the unique functor into
 bundles with section which
coincides on open subsets $U$ in topological $\K$-modules with the assignment
$U \mapsto T^\bA U$.
\item
If $\Phi:\bA \to \bB$ is a  morphism of Weil algebras, then $(\Phi_M,\id_M)$ is a
$\K$-smooth and intrinsically linear
bundle morphism between $T^\bA M$ and $T^\bB M$.
\end{enumerate}
\end{theorem}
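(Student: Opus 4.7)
The plan is to reduce each claim to the chart level, where Theorem \ref{WeilFunctorTheoremExistence} supplies the explicit formula
$$T^\bA f(x,z) = \bigl(f(x),\, (\Tay_x^k f)_\cN(z)\bigr)$$
together with the identification $T^\bA U = U \times V_\cN$. The proof of Theorem \ref{WeilFunctorTheoremManifold} has already constructed $T^\bA M$ by gluing the local patches $U_i \times V_\cN$ along the transitions $T^\bA \phi_{ij}$ and produced the global smooth section $M \to T^\bA M$ coming from the local zero sections $\sigma_{U_i}$; I will build the bundle structure on top of this atlas.

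For (1), the bundle projection on a chart patch is the first projection $U_i \times V_\cN \to U_i$, and the bundle chart change is exactly $T^\bA \phi_{ij}$. Its fiber part is $z \mapsto (\Tay_x^k \phi_{ij})_\cN(z)$. By Theorem \ref{TaylorPolyTheorem}, $\Tay_x^k \phi_{ij}$ is a polynomial of degree at most $k$ without constant term, and by the properties of algebraic scalar extension (Appendix C), the extension $(\Tay_x^k \phi_{ij})_\cN$ is again polynomial of degree at most $k$ in $z$ and without constant term, with continuous dependence on $x$. This is exactly the condition of a polynomial bundle with section in the sense of Definition \ref{ManifoldDefinition}. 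When $\cN^2 = 0$, every homogeneous component $P_i$ of degree $i \geq 2$ extends to $(P_i)_\cN(v\otimes a) = P_i(v) \otimes a^i = 0$, since $a^i \in \cN^i = 0$; hence only the degree-one part of $\Tay_x^k \phi_{ij}$ survives, making each fiber transition $\K$-linear, which is precisely the vector bundle condition.

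For (2), any smooth bundle with section over $M$ is determined by its defining cocycle of chart changes. A functor into $\SBun^\bA_\K$ coinciding with the assignment $U \mapsto T^\bA U$ on open subsets of topological $\K$-modules must, by functoriality applied to chart inclusions and chart changes, give the same bundle cocycle as the one produced above, so it agrees with $T^\bA$ after gluing; this forces uniqueness.

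For (3), the chart-wise map constructed in the proof of Theorem \ref{WeilFunctorTheoremManifold}(2) reads
$$\Phi_U : U \times V_\cN \to U \times V_\cM, \qquad (x, v \otimes a) \mapsto (x, v \otimes \Phi(a)),$$
so $\Phi_U$ covers $\id_U$ and is $\K$-linear in the fiber variable (in fact constant in $x$). The main obstacle is to verify that these local maps piece together to a well-defined fiberwise $\K$-linear map $\Phi_M$, despite the bundle transitions $T^\bA \phi_{ij}$ being only polynomial, not linear. This reduces to checking the compatibility $\Phi_W \circ (\Tay_x^k \phi_{ij})_\cN = (\Tay_x^k \phi_{ij})_\cM \circ \Phi_V$, which follows from the naturality of algebraic scalar extension: for any polynomial $P = \sum P_i : V \to W$ over $\K$,
$$\Phi_W \bigl(P_i(v) \otimes a^i\bigr) = P_i(v) \otimes \Phi(a)^i,$$
since $\Phi$ is an algebra morphism. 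This is precisely the statement that $\Phi_M$ is an intrinsically linear bundle morphism between the polynomial bundles $T^\bA M$ and $T^\bB M$, and $\K$-smoothness has already been established in Theorem \ref{WeilFunctorTheoremManifold}(2).
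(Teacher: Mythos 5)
Your proposal is correct and follows essentially the same route as the paper: reduce everything to the chart level, where the bundle chart changes are the maps $T^\bA\phi_{ij}(x,z)=(\phi_{ij}(x),(\Tay_x^k\phi_{ij})_\cN(z))$, whose fiber parts are polynomial of degree at most $k$ without constant term (hence linear when $\cN^2=0$), with uniqueness coming from the fact that the local data determine the glued bundle, and part (3) from the commutation of $\Phi$ with algebraic scalar extension. The only cosmetic difference is that the paper obtains the projection and section by applying functoriality in the algebra variable to $\pi^\bA:\bA\to\K$ and $\sigma^\bA:\K\to\bA$, whereas you read them off directly in charts; both are fine.
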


\begin{theorem}[The ``$K$-theory of Weil bundles'']
\label{KTheoryTheorem}
Let $\bA=\K\oplus \cN$ and $\bB=\K\oplus \cM$ be $\K$-Weil algebras.

\begin{enumerate}
\item
The Weil functor defined by the Whitney sum $\bA \oplus_\K \bB$ of two Weil
algebras (cf.\ Lemma \ref{WhitneyLemma1})
 is naturally isomorphic to the Whitney sum of $T^\bA$ and $T^\bB$ over the base 
 manifold, i.e., for all $M \in \Man_\K$,
$$
T^{\bA \oplus_\K \bB} M \cong T^\bA M \times_M T^\bB M,
$$
where $\times_M$ denotes the bundle product over $M$. By transport of structure, this
 defines a structure of $\bA \oplus_\K \bB$-manifold on $T^\bA M \times_M T^\bB M$.
 \item
The Weil functor defined by the tensor product $\bA \otimes_\K\bB$ is isomorphic to the
composition $T^\bB \circ T^\bA$, and the typical fiber of $T^{\bA \otimes_\K\bB} M$ over
$M$ is $\K$-diffeomorphic to
$$
V_\cN \oplus V_\cM \oplus V_{\cN \otimes \cM} .
$$
\item
There is a natural bundle isomorphism $T^{\bA\otimes\bB} M\cong T^{\bB\otimes\bA}M$ called the {\em generalized flip}.
\item
There is a natural ``distributivity isomorphism'' of bundles over $T^\bA M$
$$
T^\bA (T^\bB M \times_M T^{\bB'} M) \cong T^\bA T^\bB M \times_{T^\bA M} T^\bA T^{\bB'} M .
$$
\end{enumerate}
\end{theorem}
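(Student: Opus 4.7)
The overall plan is to derive each of the four assertions by translating the corresponding algebraic identity among Weil algebras from Lemma \ref{WhitneyLemma1} into the geometric language of bundles via the scalar extension of Taylor polynomials, and then appealing to the uniqueness of Weil functors stated in Theorems \ref{WeilFunctorTheoremExistence} and \ref{WeilFunctorTheoremBundle}(2). For (1), the decisive point is that in the Whitney sum $\bA \oplus_\K \bB = (\bA \otimes \bB)/(\cN \otimes \cM)$ the relation $\cN \cdot \cM = 0$ holds by construction. Consequently, for any $\K$-polynomial $P = \sum_i P_i$ with $P_i$ homogeneous of degree $i$, the scalar extension $P_{\cN \oplus \cM}$ splits as $P_\cN \oplus P_\cM$: every mixed product surviving in $P_i(v_\cN + v_\cM)$ would lie in $\cN \cdot \cM$, hence be zero. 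Applied to the Taylor polynomials of chart changes (Theorem \ref{TaylorTheorem}), this shows that the transition maps of $T^{\bA \oplus_\K \bB} M$ coincide with those of the fiber product $T^\bA M \times_M T^\bB M$, giving the claimed bundle isomorphism.

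For (2), the aim is to show that the functor $T^\bB \circ T^\bA$ satisfies the defining properties of $T^{\bA \otimes \bB}$. On a chart $U \subset V$, the natural identification $(V_\bA)_\bB \cong V \otimes_\K (\bA \otimes \bB)$ identifies $T^\bB(T^\bA U)$ with $T^{\bA \otimes \bB} U$ at the level of total spaces, with the fiber splitting $V_\cN \oplus V_\cM \oplus V_{\cN \otimes \cM}$ exactly reflecting $\cN_{\bA \otimes \bB} = \cN \oplus \cM \oplus (\cN \otimes \cM)$. To upgrade this to an isomorphism of $\bA \otimes \bB$-manifolds, I would endow $T^\bB(T^\bA M)$ with the natural $\bA \otimes \bB$-smooth structure coming from the identification above, observe that both $T^\bB \circ T^\bA$ and $T^{\bA \otimes \bB}$ are $\bA \otimes \bB$-smooth and agree on the zero section $\sigma_U$, and conclude via the uniqueness statement of Theorem \ref{WeilFunctorTheoremExistence} applied over the base ring $\bA \otimes \bB$.

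Part (3) follows immediately: the canonical $\K$-algebra isomorphism $\bA \otimes \bB \cong \bB \otimes \bA$ is a morphism of Weil algebras, and Theorem \ref{WeilFunctorTheoremManifold}(2) produces the associated natural bundle isomorphism, which via (2) translates into the generalized flip exchanging the two iterated-bundle presentations. For (4), one combines (1) and (2) with the distributive law $\bA \otimes_\K (\bB \oplus_\K \bB') \cong (\bA \otimes_\K \bB) \oplus_\bA (\bA \otimes_\K \bB')$ of Lemma \ref{WhitneyLemma1}(3), where the Whitney sum on the right is taken over $\bA$; applying (1) in the category of $\bA$-manifolds over the base $\bA$-manifold $T^\bA M$ turns this sum into the fiber product over $T^\bA M$, yielding the stated distributivity isomorphism. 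I expect the main obstacle to be the careful bookkeeping in (2): one must verify that the $\bA$-smooth structure on $T^\bA M$ and the functorial $\bB$-scalar extension applied to it combine into a genuine $\bA \otimes \bB$-smooth structure compatible with the direct construction of $T^{\bA \otimes \bB}$, so that the uniqueness theorem can actually be invoked over the ring $\bA \otimes \bB$ rather than merely piecewise over $\bA$ and $\bB$.
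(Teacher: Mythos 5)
Your proposal is correct and follows essentially the same route as the paper: identify the extended domains chartwise, use the behaviour of scalar extensions of Taylor polynomials, and invoke the uniqueness statements of Theorems \ref{WeilFunctorTheoremExistence} and \ref{WeilFunctorTheoremBundle} for (1) and (2), the algebra isomorphism $\bA\otimes\bB\cong\bB\otimes\bA$ for (3), and Lemma \ref{WhitneyLemma1}(3) for (4). Your explicit verification in (1) that $P_{\cN\oplus\cM}$ splits as $P_\cN\oplus P_\cM$ because $\cN\cdot\cM=0$ in $\bA\oplus_\K\bB$ merely fills in a detail the paper leaves implicit.
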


We stress once again that the bundles $T^\bA M$ and $T^\bB M$ are in general {\em not}
vector bundles, so that there is no natural ``fiberwise notion of tensor product''.
Never\-theless, there exists some relation between the bundle  $T^{\bA \otimes \bB} M$
and what one might expect to be a ``fiberwise tensor product''; this question is closely 
related to the topic of {\em connections} and will be left to subsequent work. 
--
Before turning to the (easy) proofs of both theorems, let us give the relevant definitions:

\begin{definition}
An {\em $(\bA,\K)$-smooth fiber bundle} (with atlas) is given by:

\begin{enumerate}
\item a surjective $\K$-smooth map $\pi : E\rightarrow M$  from an $\bA$-smooth manifold $E$ 
(the \emph{total space}), onto a $\K$-smooth manifold $M$ (the \emph{base}),
\item a \emph{type}: an operation on the left $\mu :G\times F\rightarrow F$, $(g,y)\mapsto \rho(g)y$  of a group  $G$ 
(the \emph{structural group}) on a $\K$-smooth manifold $F$ (the \emph{typical fiber}),
\item a \emph{bundle atlas}, which induces the condition of \emph{local triviality}: there are
 \begin{itemize}
 \item a $\K$-manifold atlas $(U_i,\phi_i)_i$ of $M$, and
  \item  $\bA$-diffeomorphisms $\alpha_i : \pi^{-1}(U_i)\rightarrow V_i\times F$, called 
  \emph{bundle charts}, such that the following diagram commutes:
  \begin{equation*}
\xymatrix{ E\supset \pi^{-1}(U_i) \ar[r]^{\alpha_i} \ar[rd]^\pi & V_i\times F\ar[d]^{\phi_i^{-1}\circ \pr_{V_i}}  \\ & U_i }
\end{equation*}
 \end{itemize} 
 \item We require the bundle charts to be  \emph{compatible}, that is, for all {\em bundle chart changes}
 $$
 \alpha_{ij}:=\alpha_i\circ \alpha_j^{-1} : V_{ij}\times F\rightarrow V_{ij}\times F,
 $$ 
 there exist  maps $\gamma_{ij}: V_{ij} \rightarrow G$ 
 (\emph{transition functions}) satisfying $\alpha_{ij}(x,y)=(\phi_{ij}(x),\rho(\gamma_{ij}(x))y)$.
\end{enumerate}
A {\em bundle morphism}
 between two $(\bA,\K)$-smooth bundles $\pi:E\to M$ and $\pi':E'\to M'$ is, as usual, a pair of maps 
 $(\Phi,\phi)$, where $\Phi:E\to E'$ is an $\bA$-smooth map and 
 $\phi:M\to M'$ is a $\K$-smooth map such that  $\pi' \circ \Phi = \phi \circ \pi$.
Finally, a \emph{bundle with section} is a fiber bundle together with
a $\K$-smooth section $\sigma:M \to E$ of 
$\pi:E \to M$, and a {\em morphism of bundles with section} is a 
morphism of fiber bundles commuting with sections:
$\Phi \circ \sigma = \sigma' \circ \phi$.
\end{definition}

Note that if we fix $x\in V_{ij}$, then the maps $y\mapsto \rho(\gamma_{ij}(x))y$ are $\K$-diffeomorphisms, 
so that we can see  $G$ (in fact $\rho(G)$) as a subgroup of $\Diff_\K(F)$.
We do not require $\mu$ and $\gamma_{ij}$ to be smooth.
 If it is the case (in particular, if $G$ is a $\K$-Lie group), then the bundle is said to be
  \emph{strongly differentiable}.
Obviously,
$(\bA,\K)$-smooth bundles form a category,  denoted by $\Bun^\bA_\K$.
Bundles with section also form a category, denoted by $\SBun$.

\begin{definition}[Polynomial bundle] \label{PolyBundleDef} Let $V,W$ be topological $\K$-modules.
\begin{enumerate}
\item
A fiber bundle with atlas
 is called a {\em $\K$-polynomial bundle of degree $k$} if the typical fiber is a $\K$-module 
 and if the structural group $G$ acts polynomially of degree $k$ on the typical fiber $F$ 
(thus  $\rho(G)$ is then a subgroup of the polynomial group $\GPol_k(F)$,
see Definition \ref{PolynomialDefinition}), 
{\em i.e.},  if the bundle chart changes $\alpha_{ij}$ are $\K$-polynomial of degree $k$ in fibers.
In particular, an {\em affine bundle} is a polynomial bundle of degree $1$. If, moreover,
 the bundle chart changes are without constant term, then the bundle is a {\em vector bundle}.
\item
A map $f:E \to E'$ between fiber bundles with atlas is called
 {\em intrinsically $\K$-linear (resp. $\K$-polynomial)}
 if the typical fibers are $\K$-modules,
if it maps fibers to fibers and if, 
with respect to all charts from the given atlasses, the chart representation
of $f:E_x \to E'_{f(x)}$ is $\K$-linear (resp. $\K$-polynomial).
\end{enumerate}
\end{definition}

\begin{proof} {\em (of Theorem \ref{WeilFunctorTheoremBundle})}
(1) We have seen that $T^\bA M$ is an $\bA$-manifold.
Moreover, the Weil algebras morphisms $\pi^\bA:\bA\to \K$ and its section 
$\sigma^\bA :\K\to \bA,t\mapsto t\cdot 1$ induce $\K$-smooth morphisms  
$\pi^\bA_M:T^\bA M\to M$ and its section $\sigma_M^\bA:M\to T^\bA M$ 
by Theorem \ref{WeilFunctorTheoremManifold}.
Locally, over a chart domain $U$, $\pi^\bA$ is given by the linear map 
$T^\bA U = U \times V_\cN \to U$. 
The section $\sigma^\bA$ is locally given by $U \to U \times V_\cN$, $x \mapsto (x,0)$.

Let us show that this bundle is indeed locally trivial, with typical fiber $V_\cN$,
 and that the chart changes are polynomial in fibers.
The bundle atlas is given by the $\K$-manifold atlas $(U_i,\phi_i)$ of $M$ and by the $\bA$-diffeomorphisms
 $\alpha_i:(\pi^\bA)^{-1}(U_i)=U_i\times V_\cN\to V_i\times V_\cN$, 
$(x,y) \mapsto T^\bA f(x,y)=(f(x),(\Tay_x^k f)_\cN (y))$.
The maps $y\mapsto (\Tay_x^k f)_\cN (y)$ are $\K$-smooth polynomial, of degree at most $k$ and  without constant term,  
hence define an $(\bA,\K)$-smooth polynomial bundle over $M$. 
In particular, if
 $\cN$ is nilpotent of order $2$, then $T^\bA M\to M$ is a polynomial bundle of degree $1$ 
without constant term, that is, a vector bundle.

\ssk
(2) follows directly from the uniqueness statement in 
 \ref{WeilFunctorTheoremExistence}, and (3) from
  \ref{WeilFunctorTheoremManifold}.
\end{proof}
 
\begin{proof} {\em (of Theorem \ref{KTheoryTheorem})}.
(1)
Let $f:V \supset U \to W$ smooth over $\K$. 
The result follows from 
$$
T^\bA U\times_U T^\bB U=U\times V_\cN\times V_\cM=T^{\bA\oplus_\K \bB} U
$$  and applying part (2) of the preceding theorem.

\ssk
(2)
Let $f:V \supset U \to W$ smooth over $\K$. 
We have 
\begin{eqnarray*}
T^\bB (T^\bA U))=T^\bB(U\times V_\cN)&=&(U\times V_\cN)\times (V\times V_\cN)_\cM\\
&=&U\times V_\cN\times V_\cM\times V_{\cN\otimes\cM}\\
&=&U\times V_{\cN\oplus\cM\oplus\cN\otimes\cM}\\
&=&T^{\bA\otimes\bB}U
\end{eqnarray*} 
Applying twice Theorem \ref{WeilFunctorTheoremExistence} and noting that 
$\sigma_{T^\bA U}^{\bA}\circ\sigma_U^\bA=\sigma_U^{\bA\otimes\bA}$,
 it follows that
$$
T^\bB (T^\bA f): T^\bB (T^\bA U) \to (W \otimes_\K \bA) \otimes_\K \bB
$$ 
is an extension of $f$ that is smooth over the ring
$T^\bB (T^\bA \K) = T^\bB \bA = \bA \otimes_\bK \bB  $. 
Hence, by the uniqueness statement in Theorem \ref{WeilFunctorTheoremManifold}, 
$T^\bB (T^\bA f) = T^{\bA \otimes \bB} f$.

\ssk
(3) follows from the Weil algebra isomorphism $\bA\otimes\bB\cong \bB\otimes\bA$.

\ssk
(4)  follows from (1), (2), (3) and Lemma \ref{WhitneyLemma1} (3).
\end{proof}

\section{Canonical automorphisms, and graded Weil algebras}\label{sec:Endom}

\subsection{Canonical automorphisms}
Concerning the action of the ``Galois group'' $\Aut_\K(\bA)$, Theorem 
\ref{WeilFunctorTheoremBundle}
implies immediately that it acts canonically
by certain intrinsically $\K$-linear bundle automorphisms of $T^\bA M$, called
{\em canonical automorphisms}:
$$
\Aut_\K(\bA) \times T^\bA M \to T^\bA M, \quad (\Phi,u) \mapsto \Phi_M (u) \,  .
$$
This action commutes  with the natural action
of the diffeomorphism group $\Diff_\K(M)$:
$$
\Diff_\K(M) \times T^\bA M \to T^\bA M, \quad (f,u) \mapsto T^\bA f (u) \, .
$$
Here are some important examples of canonical automorphisms:

\begin{example}
For each $r \in \K^\times$,
there is a canonical automorphism $T\K \to T\K$,
$x + \eps y \mapsto x + \eps r y$. 
The corresponding canonical map $TM \to TM$ is multiplication by the scalar
$r$ in each tangent space.
This example generalizes in two directions:
\end{example}

\begin{example}
By induction, the preceding example yields an action of
$(\K^\times)^k$ by automorphisms on the iterated tangent manifold $T^k \K$.
The action of the diagonal subgroup $\K^\times$ then gives the canonical
$\K^\times$-action $\rho_{[\cdot]}$ described in Appendix B.
\end{example}

\begin{example}\label{HomogExample}
For each $r \in \K^\times$,
there is a canonical automorphism $\SJ^k \K \to \SJ^k \K$,
$P(X) \mapsto P(rX)$. 
The corresponding action of $\K^\times$ by automorphisms is the action $\rho_{\la\cdot\ra}$ described
in Appendix B.
It is remarkable that, in these cases, the canonical automorphisms can be traced back to
isomorphisms on the level of difference calculus
(Appendix A):

\begin{theorem}\label{JetMorphism} Let $r \in \K^\times$,
$\sss \in \K^k$ and $M$ a $\K$-manifold with atlas modelled on $V$.
There is a canonical bundle isomorphism
$$
\SJ^k_{(s_1,\ldots,s_k)} M \to \SJ^k_{(r\inv s_1,\ldots,r\inv s_k)} M
$$
given in all bundle charts by 
$\vvv = (v_0,\ldots,v_k) \mapsto r.\vvv = (v_0,r v_1,\ldots ,r^k v_k)$.
\end{theorem}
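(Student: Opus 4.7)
The plan is to reduce the theorem to a naturality statement for the $\K^\times$-action on the simplicial $k$-jet functor. The map $\vvv \mapsto r.\vvv$ is defined in each bundle chart and preserves the base component $v_0$, so it automatically commutes with the projections to $M$; the only genuine content is its chart-independence. Writing $\phi_{ij}$ for the chart changes of $M$, the transition maps of $\SJ^k_{\sss} M$ (resp. $\SJ^k_{r\inv \sss} M$) act on fibers as $\SJ^k_\sss \phi_{ij}$ (resp. $\SJ^k_{r\inv \sss}\phi_{ij}$), so the candidate map glues to a well-defined smooth bundle morphism as soon as the following naturality identity holds for every $\K$-smooth map $f:U \to W$ between open sets in topological $\K$-modules $V,W$:
$$
\rho_r^W \circ \SJ^k_\sss f \;=\; \SJ^k_{r\inv \sss} f \circ \rho_r^V, \qquad \rho_r^V(v_0,\ldots,v_k):=(v_0, rv_1, \ldots, r^k v_k). \qquad (\star)
$$
Given $(\star)$, applying it to each $\phi_{ij}$ shows the fiberwise rescalings match across chart overlaps; the inverse is obtained by substituting $r\inv$ for $r$, and smoothness is obvious in charts.

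I would prove $(\star)$ first for nonsingular $\sss$, where Appendix A provides a divided-difference formula expressing each component $f^{\la \ell \ra}(x, v_1, \ldots, v_\ell; \sss^{(\ell)})$ as a $\K$-linear combination of values of $f$ evaluated at points of the form $x + s v_1 + s^2 v_2 + \ldots + s^\ell v_\ell$, for $s$ running over the nodes of $\sss^{(\ell)}$, divided by Vandermonde-like products of node differences (the analogue of the formula used in the proof of Lemma \ref{NormalizedDifferentialSchwarzLemma}). Under the substitution $\sss \mapsto r\inv \sss$, $v_\ell \mapsto r^\ell v_\ell$, each argument of $f$ becomes $x + (r\inv s)(r v_1) + (r\inv s)^2 (r^2 v_2) + \ldots = x + s v_1 + s^2 v_2 + \ldots$ and is therefore invariant, while each factor in the denominator scales by $r\inv$, producing exactly the component-wise rescaling by $r^\ell$ of the $\ell$-th jet component. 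The extension from nonsingular to arbitrary $\sss$ then follows from continuity of both sides of $(\star)$ in $\sss$ together with density of $(\K^\times)^k$ in $\K^k$.

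The main obstacle is the precise combinatorial bookkeeping in $(\star)$: one must verify that the powers of $r$ from the denominators assemble to the right weights on each jet component with no extra scalar. This is cleanest to see algebraically: the substitution $X \mapsto rY$ defines a $\K$-algebra isomorphism
$$
\SJ^k_{\sss}\K \cong \K[X]/\bigl(X(X-s_1)\cdots(X-s_k)\bigr) \;\longrightarrow\; \K[Y]/\bigl(Y(Y-r\inv s_1)\cdots(Y-r\inv s_k)\bigr) \cong \SJ^k_{r\inv\sss}\K
$$
sending $[X^i]$ to $r^i [Y^i]$, which is precisely the fiber rescaling claimed by the theorem. From this viewpoint $(\star)$ is a manifestation of the functoriality of $\SJ^k_\sss$ under reparametrization, in the spirit of Theorem \ref{SimpDifferentialFunctorialityTheorem} and Example \ref{HomogExample}, and the whole statement becomes the geometric incarnation, on the level of manifolds, of the canonical $\K^\times$-action $\rho_{\la\cdot\ra}$ already studied in Appendix B.
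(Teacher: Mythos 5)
Your proof is correct and follows essentially the same route as the paper: the paper's entire proof is the one-line observation that the statement is a restatement of the homogeneity property $\SJ^{\la k\ra} f \circ \rho_{\la k\ra}(r) = \rho_{\la k\ra}(r)\circ \SJ^{\la k\ra} f$ of Theorem \ref{SimpDifferentialFunctorialityTheorem}\,(ii), which is exactly your identity $(\star)$, itself established in the appendices by the same divided-difference computation for nonsingular $\sss$ followed by density that you sketch. (One minor imprecision: the node arguments of $f$ in $f^{\rangle \ell \langle}$ are $v_0+\sum_{j}\prod_{m=0}^{j-1}(s_i-s_m)\,v_j$ rather than $v_0+\sum_j s^j v_j$, but each product of $j$ factors scales by $r^{-j}$ exactly as $s^j$ would, so your invariance of the arguments and the weight count $r^\ell$ per component are unaffected.)
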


\begin{proof}
This is a restatement of the homogeneity property Theorem 
\ref{SimpDifferentialFunctorialityTheorem}  (ii) in
terms of the invariant language of  manifolds (cf.\ \cite{Be10} for notation).
\end{proof}

There is a similar result for the $\K^\times$-action on the  bundles $  T^k_{(\ttt)} M$.
For general automorphisms $\Phi$ of $\SJ^k \K$ or $T^k\K$, it seems to be difficult
or even impossible to 
realize them  as limit cases of a families of isomorphisms in difference calculus. 
\end{example}

\begin{example}
The map  $TT\K\to TT\K$,
$x+ \eps_1 y_1 +\eps_2 y_2 +\eps_1 \eps_2 y_{12} 
\mapsto x+ \eps_1 y_2 +\eps_2 y_1 +\eps_1 \eps_2 y_{12} $ is an automorphism,
called the {\em flip}.
The corresponding canonical diffeomorphism $TTM \to TTM$ is also called the {\em flip}
(see \cite{KMS}). By induction, we get an action of the symmetric group $S_k$
on $T^k M$ (see \cite{Be08}).
Recall (\cite{BGN04}) that the flip comes from {\em Schwarz's Lemma}, and that the proof of
Schwarz's Lemma in loc.\ cit.\ uses a symmetry of difference calculus in 
$\ttt = (t_1,t_2,t_{12})$ when $t_{12}=0$. It is not clear whether such a symmetry 
extends to difference calculus for all $\ttt$.

In a similar way, for any Weil algebra $\bA$, the map
 $\bA\otimes \bA\to \bA\otimes\bA, a\otimes a' \mapsto a' \otimes a$ is an automorphism, 
 called the {\em generalized flip}. The corresponding canonical diffeomorphism 
 $T^{\bA\otimes \bA}M \to T^{\bA\otimes \bA}M$ is also called the {\em generalized flip}.
\end{example}

\subsection{Graded Weil algebras and their automorphisms}

\begin{definition}
A Weil algebra $\bA = \K \oplus \cN$ is called {\em $\N$-graded (of length $k$)}  if it is of the
form $\bA = \bA_0 \oplus \ldots \oplus \bA_k$ with free submodules $\bA_i$ such
that $\bA_i \cdot \bA_j \subset \bA_{i+j}$ and
$\bA_0 = \K$.
\end{definition}

In \cite{KM04}, graded Weil algebras are called  {\em homogeneous Weil algebras}.
All examples of Weil algebras considered so far are graded -- in fact, 
it is not so easy to construct a Weil algebra that does not admit an $\N$-grading
(see \cite{KM04}) --
and in Lemma
\ref{GradedLemma} we have seen that such gradings arise naturally in differential calculus.
We are interested in graded Weil algebras because they admit a ``big'' group of 
automorphisms: if
we denote an element $a$ of $\bA=\bigoplus_{i=0}^k \bA_i$ by $(a_i)_{0\leq i\leq k}$, 
where $a_i\in \bA_i$ for all $i$, then obviously, for $r \in \K^\times$,
$$
\bA \to \bA, \quad (a_i)_{0\leq i\leq k} \mapsto (r^i a_i)_{0 \leq i \leq k}
$$
defines a ``one-parameter family''  of automorphisms, corresponding to the derivation 
$$
\bA \to \bA, \quad (a_i)_{0\leq i\leq k} \mapsto (i  a_i)_{0 \leq i \leq k} .
$$
This generalizes the canonical $\K^\times$-action on $T^k \K$ and on $\SJ^k \K$
from Example \ref{HomogExample}.
But there are other canonical maps:
recall that usual composition of formal power series without constant term,
$Q,P \in \K[[X]]_0$, 
is given by the following explicit formula,
for  $Q(X)=\sum_{n=1}^\infty b_n X^n$  and
$P(X)=\sum_{n=1}^\infty a_n X^n$: 
\begin{equation}\label{CompositionEquation}
Q \circ P (X) = \sum_{n=1}^\infty c_n X^n  \quad \mbox{with} \qquad
c_n = 
\sum_{j=1}^n b_j \sum_{\Alpha \in \N^j,  \vert \Alpha \vert = n} 
a_{\alpha_1} \cdots a_{\alpha_j} \, .
\end{equation}
Via the ``shift'' $S:\K[[X]] \to \K[[X]]_0$, $P(X) \mapsto X P(X)$, we transfer this law 
to $\K[[X]]$ and define a product
\begin{equation}\label{StarEquation}
Q \star P(X) := (\sum_{n=0}^\infty b_n X^n) \star  (\sum_{n=0}^\infty a_n X^n) :=
(S\inv (SQ \circ SP) )(X) = \sum_{n=0}^\infty u_n X^n
\end{equation}
with $u_n = \sum\limits_{j=0}^{n} b_{j} \sum\limits_{\alpha_0 + \ldots +
\alpha_j = n - j}a_{\alpha_0} \cdots a_{\alpha_j}$ (for $n=0$, this has to be interpreted as $u_0=b_0 a_0$).
Formally, $Q \star P = \frac{1}{X} ( (XQ) \circ (XP))$.
Now, the formula for $u_n$ is compatible with {\em graded} algebras, leading to the
following result:

\begin{theorem}\label{GradedTheorem}
Let $\bA$ be a graded Weil algebra of length $k$.
Then there is a well-defined ``product'' on $\bA$,
given by
$$
b \star a = (b_i) \star (a_i) := (u_i), \quad
u_i = \sum\limits_{j=0}^{i} b_{j} \sum\limits_{\alpha_0 + \ldots +
\alpha_j = i - j}
a_{\alpha_0} \cdots a_{\alpha_j} .
$$
This product is associative and right-distributive (i.e., $(b + b') \star a = b \star a + b' \star a$;
in other words, $(\bA,+,\star)$ is a {\em near-ring}). The right-multiplication operators
$$
R_a : \bA \to \bA, \qquad b \mapsto R_a(b):= b \star a,
$$
with $a \in \bA$, are algebra endomorphisms of the Weil algebra $(\bA,+,\cdot)$.
They are isomorphisms if, and only if, $a \in \bA^\times$, i.e., iff $a_0 \in \K^\times$.
\end{theorem}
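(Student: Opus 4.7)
My plan is to treat the star product on $\bA$ as the graded, truncated counterpart of formal power series composition in $\K[[X]]_0$, transferred through the shift $S: P \mapsto XP$, and to derive each claim from its analogue for $\circ$. The first step is to check well-definedness: the grading condition $\bA_i \cdot \bA_j \subset \bA_{i+j}$ together with $\bA_i = 0$ for $i > k$ guarantees that in the defining formula each summand $b_j \, a_{\alpha_0} \cdots a_{\alpha_j}$ lies in $\bA_{j + (i-j)} = \bA_i$, and the outer sum over $j \in \{0, \ldots, i\}$ and the inner sum over compositions $\alpha_0 + \ldots + \alpha_j = i - j$ are both finite. No convergence issue arises, because the length-$k$ truncation is automatic.

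Second, I would establish the near-ring axioms. Associativity $(c \star b) \star a = c \star (b \star a)$ and right-distributivity $(b + b') \star a = b \star a + b' \star a$ are polynomial identities in the components that can be read off from the presentation $b \star a = S^{-1}(Sb \circ Sa)$; they inherit directly from associativity of $\circ$ and from additivity of $\circ$ in its left argument on $\K[[X]]_0$. The only additional check beyond copying the power-series argument is that both sides of each identity lie in $\bA$, which is guaranteed by the bookkeeping from the first step.

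Third, I would show that each $R_a: b \mapsto b \star a$ is an algebra endomorphism of $(\bA, +, \cdot)$. Additivity and $\K$-homogeneity in $b$ follow from right-distributivity and the explicit formula. The substantial content is multiplicativity $R_a(b \cdot b') = R_a(b) \cdot R_a(b')$, which is the graded shadow of the ring identity $(Q_1 Q_2) \circ P = (Q_1 \circ P)(Q_2 \circ P)$ expressing that composition in $\K[[X]]_0$ is evaluation at $P$ and that evaluation respects pointwise products. The verification amounts to expanding both sides by the defining formula and matching degree by degree in the grading of $\bA$; I expect this matching to be the main technical obstacle, since the right-hand side produces a product-of-sums whose regrouping into the single-sum form on the left requires a careful combinatorial handling of the composition partitions $(\alpha_0, \ldots, \alpha_j)$.

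Finally, I would characterize invertibility by invoking the formal composition inverse in $\K[[X]]_0$: a series $Sa = X \, a(X)$ admits a composition inverse exactly when its coefficient of $X$, namely $a_0$, lies in $\K^\times$. Pulling this inverse back by $S^{-1}$ yields an element $a' \in \bA$ with $a' \star a = a \star a' = 1$, and by the associativity already proved, $R_{a'}$ is then a two-sided inverse of $R_a$. For the converse, if $R_a$ is bijective then its induced map on the quotient $\bA/\cN \cong \K$ is multiplication by $a_0$, which forces $a_0 \in \K^\times$. Combined with Lemma \ref{WeilalgebraLemma} (which gives $a \in \bA^\times \iff a_0 \in \K^\times$), this yields the stated equivalence.
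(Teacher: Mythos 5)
Your overall strategy --- transporting everything through the shift $S$ to composition of formal power series --- is exactly the paper's: its proof introduces $\Psi:\bA\to\bA[[X]]$, $(a_i)\mapsto\sum_i a_iX^i$, observes that $\Psi$ intertwines $+$, $\cdot$ and $\star$, and appeals to known properties of the near-ring of formal power series. Your first, second and fourth steps are essentially sound (for the fourth you should additionally verify that the compositional inverse of $S\Psi(a)$ lies again in the image of $S\Psi$, i.e.\ that its $X^{m+1}$-coefficient lies in $\bA_m$ and vanishes for $m>k$; this is an easy induction on the recursion defining the inverse, using the grading).

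The gap is in your third step. Multiplicativity of $R_a$ is \emph{not} the shadow of $(Q_1Q_2)\circ P=(Q_1\circ P)(Q_2\circ P)$, because of the shift. Writing $\circ$ for substitution of the constant-term-free series $S\Psi(a)=X\Psi(a)$, one has
$$
\Psi\bigl(R_a(bb')\bigr)=S^{-1}\bigl(S\Psi(bb')\circ S\Psi(a)\bigr)
=\Psi(a)\cdot\bigl(\Psi(b)\circ S\Psi(a)\bigr)\cdot\bigl(\Psi(b')\circ S\Psi(a)\bigr),
$$
whereas
$$
\Psi\bigl(R_a(b)\cdot R_a(b')\bigr)=\Psi(a)^2\cdot\bigl(\Psi(b)\circ S\Psi(a)\bigr)\cdot\bigl(\Psi(b')\circ S\Psi(a)\bigr):
$$
the two sides differ by a factor $\Psi(a)$. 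Concretely, in degree zero the stated formula gives $u_0=b_0a_0$, so $R_a(bb')_0=b_0b'_0a_0$ while $\bigl(R_a(b)\cdot R_a(b')\bigr)_0=b_0b'_0a_0^{2}$; these disagree for all $b,b'$ unless $a_0=a_0^2$. So no combinatorial regrouping of the composition partitions will close this step for the product exactly as written. The operator that \emph{is} an algebra endomorphism of $(\bA,+,\cdot)$ is the unshifted substitution $b\mapsto\Psi^{-1}\bigl(\Psi(b)\circ S\Psi(a)\bigr)$, whose components are $\sum_{j=0}^{i}b_j\sum_{\alpha_1+\cdots+\alpha_j=i-j}a_{\alpha_1}\cdots a_{\alpha_j}$ (with $j$ factors of $a$ rather than $j+1$); it satisfies $b\star a=a\cdot\Psi^{-1}(\Psi(b)\circ S\Psi(a))$, and it is the operator actually computed in the examples following the theorem (the canonical $\K^\times$-action $(b_i)\mapsto(r^ib_i)$ for $a=r\in\K^\times$, the shift $[P(X)]\mapsto[P(X^2)]$ for $a=\delta$). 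The paper's own proof is silent on exactly this point, so your proposal reproduces, rather than repairs, the weak spot; to make the argument go through you must either prove the endomorphism property for the corrected operator, or renormalize the $\star$-product so that right multiplication coincides with it.
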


\begin{proof}
The product is indeed well-defined: with notation from the theorem, we have indeed
$u_i \in \bA_i$. 
The map
$\Psi:\bA\to \bA[[X]]$, $(a_i)_i\mapsto \sum_{i=0}^k a_iX^i$
is a $\K$-linear map onto its image $\bA':=\bA_0\oplus\bA_1X\oplus\dots\oplus\bA_kX^k$.
It intertwines all algebraic structures considered so far: addition $+$, algebra product
$\cdot$ (here we use nilpotency of $\bA$) and $\star$ (as defined in the theorem, resp.\
by equation (\ref{StarEquation})). Therefore all claims now follow immediately from
known properties of the near-ring of formal power series $\bA[[X]]$.
\end{proof}


\begin{corollary}
With assumptions and notation as in the theorem, the group $(\bA^\times,\star)$ acts by canonical
and intrinsically linear automorphisms on each Weil bundle $T^\bA M$, and the monoid
$(\bA,\star)$ acts by intrinsically linear bundle endomorphisms. 
\end{corollary}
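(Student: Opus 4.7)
The plan is to combine Theorem \ref{GradedTheorem} (which already does the algebraic heavy lifting) with the functoriality in $\bA$ from Theorem \ref{WeilFunctorTheoremManifold}(2) and the bundle-theoretic statement of Theorem \ref{WeilFunctorTheoremBundle}(3). Essentially, one only needs to verify that the right-multiplication operators $R_a$ are morphisms of Weil $\K$-algebras in the sense required by our framework, and then transport the monoid/group structure through the functor $T^\bA$.

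First I would check that for each $a \in \bA$ the map $R_a : \bA \to \bA$ is a morphism of Weil $\K$-algebras as defined in Section~3, and not merely an abstract algebra endomorphism. Theorem \ref{GradedTheorem} asserts that $R_a$ is an algebra endomorphism of $(\bA,+,\cdot)$, and that it is an isomorphism precisely when $a \in \bA^\times$. Continuity is immediate, since the explicit formula for $R_a(b) = b \star a$ is $\K$-polynomial in the coordinates of $b$ (in any $\K$-basis of $\bA$), and $\bA$ carries the product topology. Preservation of the nilpotent ideal is also immediate from the defining formula: if $b_0=0$ then the degree-zero component of $b \star a$ equals $b_0 a_0 = 0$, so $R_a(\cN_\bA) \subset \cN_\bA$.

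Second, I would apply Theorem \ref{WeilFunctorTheoremManifold}(2) with $\bB = \bA$ and $\Phi = R_a$, which produces, for each $\K$-manifold $M$, a $\K$-smooth map $(R_a)_M : T^\bA M \to T^\bA M$, canonical in $M$. Theorem \ref{WeilFunctorTheoremBundle}(3) gives that this map is an intrinsically $\K$-linear bundle morphism over the identity of $M$. When $a \in \bA^\times$, the $\star$-inverse $a^{\star(-1)}$ exists (near-ring statement in Theorem \ref{GradedTheorem}) and yields a two-sided inverse at the bundle level, so $(R_a)_M$ is a bundle automorphism.

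Third, I would read off the action property from functoriality. Associativity of $\star$ gives $R_a \circ R_b = R_{b \star a}$ as Weil algebra endomorphisms, and functoriality of $\Phi \mapsto \Phi_M$ then yields
\[
(R_a)_M \circ (R_b)_M = (R_{b \star a})_M.
\]
Hence $a \mapsto (R_a)_M$ is an anti-homomorphism of monoids from $(\bA,\star)$ into $\End_{\K}(T^\bA M)$, i.e.\ a right action of the monoid $(\bA,\star)$ on $T^\bA M$ by intrinsically linear bundle endomorphisms, restricting to a right action of the group $(\bA^\times,\star)$ by intrinsically linear bundle automorphisms (or equivalently a left action of the opposite monoid/group).

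There is no real obstacle here: everything is bookkeeping, provided one has Theorem \ref{GradedTheorem} and the functoriality-in-$\bA$ statement of Theorem \ref{WeilFunctorTheoremManifold}(2). The only point worth being careful about is the variance (left versus right), which is fixed by the convention $R_a(b) = b \star a$; the statement of the corollary is formulated so as to be insensitive to this choice.
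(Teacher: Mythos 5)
Your argument is correct and is exactly the bookkeeping the authors intend: the paper states this corollary without proof, treating it as immediate from Theorem \ref{GradedTheorem} together with the functoriality in $\bA$ of Theorem \ref{WeilFunctorTheoremManifold}(2) and the intrinsic linearity of Theorem \ref{WeilFunctorTheoremBundle}(3). Your explicit verifications (continuity and preservation of $\cN$, so that $R_a$ is a morphism of Weil $\K$-algebras, and the identity $R_a\circ R_b=R_{b\star a}$ giving a right action) are precisely the points that need checking, and you handle the only genuine subtlety, the variance of the action, correctly.
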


\begin{example}
If  $a=a_0=r\in\K^\times$, the operators $R_a$ give us again the canonical 
$\K^\times$-action.
\end{example}

\begin{example}
Let $\bA = T^k \K = \K \oplus \bigoplus_{\alpha} \eps^\alpha \K$ (cf.\ \cite{Be08},
Chapter 7), and let $a$ such that $a_i = 0$ for $i \not=1$ and
$a_i = \eps_j$ for a fixed $j \in \{ 1,\ldots , k \}$.
Then $R_a$ is the {\em shift operator} denoted by $S_{0j}$ in \cite{Be08}, Chapter 20.
\end{example}

\begin{example}
Similarly, for $\bA = \SJ^k \K =\K[X]/(X^{k+1})$, with 
$a=a_1=\delta$, we get a ``shift'' $[P(X)]\mapsto [P(X^2)]$.
\end{example}

\appendix

\section{Difference quotient maps and $\K^\times$-action}

In this appendix we recall some basic definitions concerning
difference calculus from \cite{BGN04} and \cite{Be10},
and we emphasize the fact that the group $\K^\times$ acts, in a natural way, on
all objects. 
In this appendix, $\K$ may be any commutative unital ring and $V$ any $\K$-module
(no topology will be used).

\subsection{Domains and $\K^\times$-action}
 Let $U\subset V$ be a non-empty set, called  ``domain''. 
We define two kinds of  ``extended domains'', the cubic one, denoted by $U^{[k]}$ and the simplicial one, denoted by $U^{\la k \ra }$ 
for $k \in \N$, which will later be used as domains of definition of generalized kinds of tangent
maps, for a given map defined on $U$. By convention, $U^{[0]}:=U=:U^{\la 0\ra}$.

\begin{definition}[``cubic domains'']
The {\em first extended domain} of $U$ is defined as
$$
U^{[1]}:=\{ (x,v,t) \in V \times V \times \K | \,
x \in U, x+tv \in U \} \, .
$$
We say that $U$ is the {\em  base} of $U^{[1]}$, and the maps
$$
\pi^{[1]} : \, U^{[1]} \to U, \, \, (x,v,t) \mapsto x,  \quad \mbox{ and its section } \quad 
\sigma^{[1]} : \, U \to U^{[1]}, \, \, x \mapsto (x,0,0)
$$
are called the {\em canonical projection}, resp.\ {\em injection}.
We call 
$$
U^{]1[}:=\{ (x,v,t) \in U^{[1]}\vert \, t \in \K^\times \}
$$
 the set of {\em non-singular
elements in the extended domain}.
Letting $t=0$, we define the {\em most singular set} or {\em tangent domain}
$$
TU:=\{(x,v,0) \in U^{[1]}\} \cong  U \times V.
$$
By induction, we define the {\em higher order  extended cubic domains} (resp., the set of their {\em
non-singular elements}) for $k\in\N^{\star}$ by
$$
U^{[k+1]}:=(U^{[k]})^{[1]}, \quad U^{]k+1[}:=(U^{]k[})^{]1[}\,  ,
$$
and we let $T^{k+1} U := T(T^k U)$. There are {\em canonical projections}  $\pi^{[k]}_{[j]}:U^{[k]}\to U^{[j]}$, 
and their sections $\sigma^{[k]}_{[j]}:U^{[j]}\to U^{[k]}$ called {\em canonical injections}, for all $j\leq k$. 
Note that 
$$
U^{[2]} \subset (V^2 \times \K) \times (V^2 \times \K) \times \K \cong V^4 \times \K^3,
$$
and similarly we will consider
$U^{[k]}$ as a subset of $V^{2^k}\times \K^{2^{k} - 1}$
and identify $T^k U$ with $U \times V^{2^k - 1}$. 
Elements of $V$ will be called ``space variables'',  and elements of $\K$ will be called ``time variables''.
We separate  space variables and  time variables. 
Correspondingly, we denote elements of $U^{[k]}$ by 
$(\vvv,\ttt)=\bigl( (v_\alpha)_{\alpha\subset \{1,\ldots,k\}},(t_\alpha)_{\emptyset \not= \alpha \subset \{1,\ldots,k\}} \bigr)$. 
With this notation, we have in particular, $v_\emptyset \in U$,
and $T^k U = \{  \bigl( \vvv,\ooo \bigr) \in U^{[k]} \}$.
\end{definition}

An explicit description of the extended domains for $k>1$ by 
conditions in terms of sets is fairly complicated, as the number of variables growths exponentially.
In order to get a rough understanding of their structure, 
it is useful to note a sort of ``homogeneity property''.

\begin{definition}
The {\em zero order action of $\K^\times$ on $U$} is the trivial action
$\K^\times \times U \to U$, $(r,x) \mapsto x$. 
The {\em canonical $\K^\times$-action on the first extended domain} is given by
$$
\rho_{[ 1]}: \K^\times \times U^{[1]} \to U^{[1]}, \quad (r,(x,v,t)) \mapsto \rho_{[ 1]}(r) .(x,v,t):=(x,rv,r\inv t) .
$$
This is  indeed well-defined: $x+tv \in U$ if and only if
$x + r\inv t rv \in U$. Moreover, the sets $U^{]1[}$ and $TU$ are stable under this action.
Next, the {\em canonical action of $\K^\times$ on $U^{[2]}$} is defined as follows:
write $(x,u,t) = ((v_\emptyset,v_1,t_1),(v_2,v_{12},t_{12}),t_2) \in U^{[2]}$ with 
$x \in U^{[1]}$, $u \in V^{[1]}$ and $t \in \K$ such
that $x+tu \in U^{[1]}$. For $r \in \K^\times$, let
\begin{eqnarray*}
\rho_{[ 2]}(r).\bigl((v_\emptyset,v_1,t_1),(v_2,v_{12},t_{12}),t_2\bigr) &:=&
\bigl( \rho_{[ 1]}(r). x, r \rho_{[ 1]}(r). u, r\inv t \bigr)
\cr
&=& 
\bigl( (v_\emptyset ,rv_1,r\inv t_1),(rv_2,r^2v_{12}, t_{12}), r\inv t_2 \bigr) 
\end{eqnarray*}
By induction, 
we define the {\em canonical action} $\rho_{[k+1]}:\K^\times \times U^{[k+1]} \to U^{[k+1]}$ via
$$
\rho_{[ k+1]}(r).\bigl( x,u,t \bigr) :=
\bigl( \rho_{[ k]}(r). x, r \rho_{[k]}(r). u, r\inv t \bigr),
$$
which can also be written as
$$
\rho_{[ k+1]}(r).\bigl( (v_\alpha)_{\alpha},(t_\alpha)_{\alpha \not= \emptyset} \bigr) :=
\bigl( (r^{\vert \alpha \vert} v_\alpha )_{\alpha} , (r^{\vert \alpha \vert - 2 } 
t_\alpha)_{\alpha \not= \emptyset} 
\bigr) ,
$$
where $|\alpha |$ is the cardinality of the set $\alpha\subset\{1,\ldots,k+1\}$.
The sets $U^{]k+1[}$ and $T^{k+1} U$ are stable under this action.
\end{definition}

The canonical projections and injections are equivariant with respect to this action.
Note that the operator $\rho_{[ k]}(r)$ is $\K$-linear. 
An element of $U^{[k]}$ will be called {\em homogeneous of degree $\ell$} if it is an eigenvector for
all $\rho_{[ k]}(r)$, where $r \in \K^\times$, with eigenvalue $r^\ell$. For instance, elements $x$ in
the base $U$ are homogeneous of degree zero. 
Now we define another kind of extended domain and explain its relation to the ones considered
above:

\begin{definition}[``simplicial domains'']
 Let $U \subset V$ be non-empty, and let (in all the following) $s_0 := 0$.
For $k \in \N^{ \star}$, we define the {\em
extended simplicial domains} 
\begin{eqnarray*}
U^{\langle 1\rangle} &:=&U^{[1]},
\cr
U^{\langle 2 \rangle} 
&:=& 
\Bigsetof{(v_0,v_1,v_2;s_1,s_2) \in V^3 \times \K^2 }
{\begin{array}{c}
 v_0 \in U, v_0 + (s_1 - s_0) v_1 \in U, 
 \\
 v_0 + (s_2-s_0) v_1 + (s_2 - s_1)(s_2-s_0) v_2 \in U
\end{array}}\,
\cr
U^{\langle k \rangle}
&:=&
\big\{ (\vvv ; \sss) \in V^{k+1} \times \K^{k} | \, 
v_0 \in U, \quad \forall i=1,\ldots,k : \,
v_0+\sum_{j=1}^i \prod_{m=0}^{j-1} (s_i - s_m) v_j \in U \big\}.
\end{eqnarray*}
Its set of {\em  non-singular elements} is
$$
U^{\ra k \la }:= \{ (\vvv;\sss) \in U^{\langle k \rangle} \mid \forall i \not= m: s_i - s_m \in \K^\times \} .
$$
Its set of  {\em most singular elements} is
$$
\SJ^k U := \{ (\vvv;\ooo) \in U^{\langle k \rangle} \} \cong U \times V^k .
$$
For $j<k$, there are obvious {\em canonical projections} and {\em injections}
$$
\pi^{\la k\ra}_{\la j\ra} : \,U^{\la k \ra } \to U^{\la j\ra}, \quad 
\mbox{ and its section } \quad \sigma^{\la k\ra}_{\la j\ra} : \, U^{\la j \ra} \to U^{\la k \ra} \, .
$$
For the subset of most singular elements, there are also the canonical projection $\pi^k:J^kU\to U$ and its section 
$\sigma^k:U\to \SJ^kU$.
\end{definition}

Compared to the cubic case, this
definition has two advantages: it is ``explicit'', and the number
of variables grows linearly instead of exponentially; its drawback is that it is not inductive. This will
be overcome by imbedding the simplicial domains into the cubic ones (Lemma 
\ref{DomainImbLemma} below). 
Note that in \cite{Be10}, $s_0$ has been considered as a variable. Since all ``simplicial
formulas'' invoke only differences $s_i - s_j$, this variable may be frozen to the value $s_0=0$, as
done here. There is an obvious $\K^\times$-action:

\begin{definition}
We define the {\em canonical action} $\rho_{\la k\ra}$ of $\K^\times$ on $U^{\la k \ra}$ by
$$
\rho_{\la k\ra}(r). \bigl( v_0,v_1,\ldots,v_k; s_1,\ldots,s_k \bigr) := \bigl( v_0,r v_1,r^2 v_2,\ldots ,r^k v_k ;
r\inv s_1,\ldots,r\inv s_k \bigr) \, .
$$
It is immediately seen that $\rho_{\la k\ra}(r)(\vvv;\sss) \in U^{\la k \ra}$ if and only if $(\vvv;\sss)\in U^{\la k \ra}$, 
 and that $U^{\ra k \la}$ and $\SJ^k U$ are stable
under this action.
\end{definition}

Like in the cubic case, projections and injections are $\K^\times$-equivariant, and
we may speak of {\em homogeneous elements (of degree $\ell$)}.
 Again, elements from the base $U$ are
homogeneous of degree zero. An important difference with the cubic case
is that here, in contrast to the cubic case,
scalars are always homogeneous of the same degree $-1$. 
Next, we define an equivariant imbedding into
the cubic domains:

\begin{lemma}\label{DomainImbLemma}
The map $g_k: U^{\la k\ra } \to  U^{[k]}$ defined by
$g_k(\vvv;\sss) := (\uuu, \ttt)$
with
$$
u_\alpha = \Biggl\{ 
\begin{matrix} v_0 & \mbox{ if } & \alpha = \emptyset \cr
v_i & \mbox{ if } & \alpha = \{ 1,\ldots, i \} \cr
0 & \mbox{else} & \end{matrix}
\quad \quad \quad \quad
t_\alpha = \Biggl\{ 
\begin{matrix} 1 & \mbox{ if } & \alpha = \{ i,i+1 \} \cr
s_i - s_{i-1} & \mbox{ if } & \alpha = \{ i \} \cr
0 & \mbox{else} & \end{matrix}
$$
is a well-defined, $\K$-affine and $\K^\times$-equivariant imbedding of $U^{\la k \ra }$ into $U^{[k]}$.
Moreover, $g_k(U^{\ra k \la }) \subset U^{]k[}$.
\end{lemma}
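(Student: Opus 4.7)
The plan is to dispatch the easier assertions first---affine, injective, equivariant---and then to establish $g_k(U^{\la k\ra})\subset U^{[k]}$ by induction on $k$, with the non-singular refinement $g_k(U^{\ra k\la})\subset U^{]k[}$ following the same pattern. Affinity is coordinate-by-coordinate inspection: each $u_\alpha$ and $t_\alpha$ is either $0$, the constant $1$, a single $v_i$, or an affine form $s_i-s_{i-1}$. Injectivity is equally transparent, since the $v_i$ appear verbatim as certain $u_\alpha$'s while $s_i=\sum_{j\le i}t_{\{j\}}$ together with $s_0=0$ recovers the scalars. Equivariance reduces to a degree check: under $\rho_{\la k\ra}$, the variables $v_i$ and $s_i$ have degrees $i$ and $-1$; under $\rho_{[k]}$, $u_\alpha$ and $t_\alpha$ have degrees $|\alpha|$ and $|\alpha|-2$. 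The four nonzero defining relations $u_\emptyset=v_0$, $u_{\{1,\ldots,i\}}=v_i$, $t_{\{i\}}=s_i-s_{i-1}$, $t_{\{i,i+1\}}=1$ each balance, giving $0=0$, $i=i$, $-1=1-2$, $0=2-2$.

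The heart of the proof is the containment statement. I would argue by induction on $k$, using $U^{[k]}=(U^{[k-1]})^{[1]}$ and splitting subsets of $\{1,\ldots,k\}$ according to whether they contain $k$. Unwinding the definition shows that $g_k(\vvv;\sss)$ corresponds to the triple $(X^{k-1},U^{k-1},T)$ with $T=s_k-s_{k-1}$, where
$$
X^{k-1}=g_{k-1}(v_0,\ldots,v_{k-1};s_1,\ldots,s_{k-1}),
$$
$$
X^{k-1}+T\,U^{k-1}=g_{k-1}\bigl(v_0,\ldots,v_{k-2},\,v_{k-1}+(s_k-s_{k-1})v_k;\,s_1,\ldots,s_{k-2},s_k\bigr).
$$
The $U^{\la k-1\ra}$-conditions on the first argument are just the conditions at indices $1,\ldots,k-1$ of $(\vvv;\sss)\in U^{\la k\ra}$. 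For the second, the condition at the top index unwinds via the telescoping identity $\prod_{m=0}^{k-2}(s_k-s_m)\cdot(s_k-s_{k-1})=\prod_{m=0}^{k-1}(s_k-s_m)$ into precisely the level-$k$ condition of the original simplicial tuple, while the lower-index conditions coincide termwise. The inductive hypothesis then places both arguments in $U^{[k-1]}$, hence $g_k(\vvv;\sss)\in(U^{[k-1]})^{[1]}=U^{[k]}$.

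The non-singular refinement is the same induction within the filtration $U^{]k[}=(U^{]k-1[})^{]1[}$: one has $T=s_k-s_{k-1}\in\K^\times$, and the difference-sets $\{s_i-s_m:0\le m<i\le k-1\}$ and $\{s_i-s_m:0\le m<i\le k-2\}\cup\{s_k-s_m:0\le m\le k-2\}$ attached to the two $g_{k-1}$-arguments are subsets of the hypothesized unit set $\{s_i-s_m:i\ne m\}$. I expect the main obstacle to be the careful bookkeeping in the recursive identity for $X^{k-1}+T\,U^{k-1}$: one must verify that among the time variables of $U^{k-1}$---the coordinates of $g_k(\vvv;\sss)$ indexed by subsets containing $k$ other than $\{k\}$ itself---only $t_{\{k-1,k\}}=1$ is nonzero, and that adding $T=t_{\{k\}}$ times the corresponding $U^{k-1}$-coordinate recombines the $(k-1)$-st space variable into $v_{k-1}+(s_k-s_{k-1})v_k$ and the $(k-1)$-st scalar into $s_{k-1}+T=s_k$, exactly as required for the second $g_{k-1}$-argument above.
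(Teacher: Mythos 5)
Your proof is correct and follows essentially the same route as the paper: the equivariance is the same degree bookkeeping ($|\alpha|$ for space variables, $|\alpha|-2$ for time variables), injectivity/affinity come from the same explicit coordinate formulas and inverse, and your induction on $k$ via $U^{[k]}=(U^{[k-1]})^{[1]}$ is precisely the "recursion procedure" the paper invokes (citing [Be10], Lemma 1.5) for the containment, which you simply carry out in detail. The key recombination step --- $X+TY$ landing on $g_{k-1}(v_0,\ldots,v_{k-2},v_{k-1}+(s_k-s_{k-1})v_k;s_1,\ldots,s_{k-2},s_k)$ with the telescoping product yielding the level-$k$ simplicial condition --- checks out.
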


\begin{proof}
The fact that $g_k(U^{\la k \ra }) \subset U^{[k]}$ is directly checked (and follows also from
the recursion procedure used in \cite{Be10}, Lemma 1.5). 
In order to check the $\K^\times$-equivariance  $g_{k}\circ\rho_{\la k\ra}(r)= \rho_{[k]}(r)\circ g_k$,
note that on the level of space variables $\vvv$, homogeneous elements $v_i$ of degree $i$
are sent again to homogeneous elements of degree $i$ (since $\vert \{ 1,\ldots, i \} \vert = i$).
On the level of time variables $\sss$,  homogeneous elements $s_i$ of degree $-1$
are sent again to homogeneous elements of degree $-1$ (since $\vert \{  i \} \vert = 1$)
 and, for $\vert \alpha \vert = 2$, the $\K^\times$-action on 
homogeneous scalars is trivial. Altogether, this implies the equivariance.
The map $g_k$ is clearly injective: the inverse (of its corestriction to $g_k(U^{\la k\ra}$) is given by: 
${g_k}_{|g_k(U^{\la k\ra})}^{-1}(\uuu,\ttt) := (\vvv; \sss)$
with
$$
 \Biggl\{ 
\begin{matrix} v_0&=&u_\emptyset &  & \cr
v_i&=&u_{\{1,\ldots,i\}} & \mbox{for all } i>1 &  \cr
 \end{matrix}
\quad \quad \mbox{ and }\quad \quad
 \Biggl\{ 
\begin{matrix} s_0&=&0 & & \cr
s_i&=&\sum_{j=0}^i t_{\{j\}}  
\end{matrix}
$$
Note that this inverse is also $\K$-affine.
\end{proof}

The proof shows that the scalar components
 $t_\alpha$ with $\vert \alpha \vert = 2$ play a special r\^ole since
they are the only ones that are homogeneous of degree zero; one may say that they are
a sort of ``pivots''. Related to this, note that $g_k$ does {\em not} map $\SJ^k U$ to
$T^k U$.  
The imbedding $g_k$ has been used in
\cite{Be10}, Theorem 1.6. For $k=1$, $g_1$ is the identity, and for  $k=2,3$ , we have explicitly
$$
g_2(v_0,v_1,v_2;s_1,s_2) =
\bigl( (v_0,v_1,s_1),(0,v_2,1),s_2-s_1 \bigr) \, ,
$$
$$
g_3(v_0,v_1,v_2,v_3;s_1,s_2,s_3) =
\Bigl( \bigl( (v_0,v_1,s_1),(0,v_2,1),s_2-s_1 \bigr),
\bigl( (0,0,0),(0,v_3,0),1 \bigr),
s_3 - s_2 \Bigr) \, .
$$

\subsection{Difference calculus}
Let $V,W$ be $\K$-modules, 
$U \subset V$ a non-empty set and $f:U \to W$ a map. 
We first define ``cubic'' difference quotients and then ``simplicial'' ones, also called
{\em generalized divided differences}. The map $g_k$ will imbed them into the cubic calculus. 

\begin{definition}[``cubic difference quotients'']
The {\em first order difference quotient of $f$} is the map
$$
f^{]1[}:U^{]1[} \to W, \quad (x,v,t) \mapsto \frac{f(x+tv)-f(x)}{t} ,
$$
and the {\em extended tangent map} is the map
$$
  T^{]1[}f :U^{]1[} \to W^{]1[}, \quad (x,v,t) \mapsto (f(x),f^{]1[}(x,v,t),t) .
$$
The {\em higher order cubic difference quotients} and {\em higher order extended tangent 
maps} are defined by induction
\begin{eqnarray*}
f^{]k+1[} &:= &\left(f^{]k[}\right)^{]1[}:U^{]k+1[} \to W
\cr
  T^{]k+1[}f  &:=&  \left(  T^{]k[}\right)^{]1[} :U^{]k+1[} \to W^{]k+1[} .
\end{eqnarray*}
\end{definition}
In \cite{Be10}, explicit formulae for $f^{]2[}$ and $  T^{]2[} f$ are given;
they are quite complicated and not very useful. 
Here are two main properties of this construction:

\begin{theorem} \label{DifferenceFunctorTheorem}
Let $f:U \to W$ and $g:U' \to W'$ with $f(U)\subset U'$.
Then 
\begin{enumerate}[label=\roman*\emph{)},leftmargin=*]
\item
Functoriality: $  T^{]k[}(g\circ f) =   T^{]k[}g \circ  T^{]k[}f$ and $T^{]k[} \id_U =   \id_{T^{]k[}U}.$
\item
Homogeneity: for all $r \in \K^\times$, $  T^{]k[}f \circ \rho_{[k]}(r)=\rho_{[k]}(r) \circ   T^{]k[}f$.
\end{enumerate}
\end{theorem}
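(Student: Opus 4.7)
The plan is to prove both statements by induction on $k$, starting from $k=1$ where everything can be checked by direct computation from the definitions, and then propagating via the recursion $T^{]k+1[}f = T^{]1[}(T^{]k[}f)$ together with the analogous recursion $\rho_{[k+1]}(r).(x,u,t) = (\rho_{[k]}(r).x,\, r\,\rho_{[k]}(r).u,\, r\inv t)$ for the $\K^\times$-action.

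For the base case of (i), I would expand both sides of
$T^{]1[}(g\circ f) = T^{]1[}g \circ T^{]1[}f$
applied to a point $(x,v,t) \in U^{]1[}$. The key telescoping identity is
$t\cdot f^{]1[}(x,v,t) = f(x+tv)-f(x),$
which lets one rewrite the second coordinate of $T^{]1[}g \circ T^{]1[}f(x,v,t)$ as
$\frac{g\bigl(f(x)+(f(x+tv)-f(x))\bigr) - g(f(x))}{t} = (g\circ f)^{]1[}(x,v,t).$
The identity $T^{]1[}\id_U = \id_{U^{]1[}}$ is immediate from the formula for $\id^{]1[}$. The inductive step for $k\geq 1$ is then purely formal: by the base case applied to the maps $T^{]k[}f$ and $T^{]k[}g$ on $U^{]k[}$, and by the induction hypothesis, one chains the two functorialities:
$T^{]k+1[}(g\circ f) = T^{]1[}\bigl(T^{]k[}(g\circ f)\bigr) = T^{]1[}(T^{]k[}g \circ T^{]k[}f) = T^{]1[}(T^{]k[}g)\circ T^{]1[}(T^{]k[}f) = T^{]k+1[}g \circ T^{]k+1[}f.$

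For the base case of (ii), I would simply evaluate $T^{]1[}f\circ \rho_{[1]}(r)$ and $\rho_{[1]}(r)\circ T^{]1[}f$ at $(x,v,t)$ and use that $r\inv t \cdot rv = tv$ and the scalar cancellation $\frac{1}{r\inv t} = r\cdot \frac{1}{t}$ to see both sides equal $(f(x),\, r f^{]1[}(x,v,t),\, r\inv t)$. For the inductive step, set $g:=T^{]k[}f$; by induction $g$ is equivariant, i.e.\ $g\circ\rho_{[k]}(r) = \rho_{[k]}(r)\circ g$. Evaluating $T^{]1[}g\circ \rho_{[k+1]}(r)$ at $(y,w,s)$, the crucial inputs are: (a) the $\K$-linearity of $\rho_{[k]}(r)$, which gives $\rho_{[k]}(r) y + s\,\rho_{[k]}(r) w = \rho_{[k]}(r)(y+sw)$, so that the equivariance hypothesis can be applied inside the difference quotient; and (b) the scalar factor $r$ produced by the rescaling $s\mapsto r\inv s$, which combines with $\rho_{[k]}(r)$ to reconstruct the ``$r\,\rho_{[k]}(r)$'' factor appearing in the definition of $\rho_{[k+1]}(r)$.

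The main (minor) obstacle is simply bookkeeping: the extended domains $U^{]k[}$ sit inside $V^{2^k}\times \K^{2^k-1}$, so one has to be careful in setting up coordinates $(y,w,s)$ on $U^{]k+1[} = (U^{]k[})^{]1[}$ with $y,w$ belonging to the ambient module of $U^{]k[}$ and $s\in\K^\times$ the new ``time'' variable. Once this is done, both induction steps reduce to a one-line manipulation, since by design the definition of $T^{]1[}$ on the cubic extended domain and the recursive formula for $\rho_{[k+1]}$ are compatible: the $r\inv$ on the new time variable compensates the $r$ that appears when pulling a scalar out of the first-order difference quotient, and $\K$-linearity of $\rho_{[k]}(r)$ handles the space variables. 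No further properties of $f$ are needed, so the proof is entirely formal once the base case is in hand.
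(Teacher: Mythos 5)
Your plan is correct and follows essentially the same route as the paper, which likewise disposes of the case $k=1$ by the direct computations you describe (the telescoping identity $t\,f^{]1[}(x,v,t)=f(x+tv)-f(x)$ for functoriality, and the cancellation $r\inv t\cdot rv=tv$ for equivariance) and then invokes induction via $T^{]k+1[}=T^{]1[}\circ T^{]k[}$ and the recursive definition of $\rho_{[k+1]}(r)$. The two ingredients you single out for the inductive step of (ii) --- $\K$-linearity of $\rho_{[k]}(r)$ and the compensation between the factor $r$ from the difference quotient and the $r\inv$ on the new time variable --- are exactly what makes the paper's one-line induction work.
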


\begin{proof}
(i) For $k=1$, this is an easy computation, and for $k>1$, it follows immediately by induction
(see \cite{BGN04}).
(ii) For $k=1$:
$$
  T^{]1[}f (x,rv,r\inv t)= \left( f(x), \frac{f(x+r\inv t rv) - f(x)}{r\inv t},r\inv t \right) 
= \bigl( f(x), r f^{]1[}(x,v,t),r\inv t \bigr)
$$
and for $k>1$, the result follows by induction. 
\end{proof}

Now we come to simplicial difference calculus and to its imbedding into cubic calculus.
In the following, recall that, by definition, $s_0=0$.

\begin{definition}[``simplicial difference quotients'']
For a map $f:U \to W$ 
 we define  {\em (generalized) divided differences}  $f^{\rangle k \langle}:U^{\rangle k \langle}\to W$ by
\begin{eqnarray*}
f^{\rangle 1 \langle}(v_0,v_1;s_1) & := & f^{]1[}(v_0,v_1,s_1) = \frac{f(v_0)}{s_0-s_1} +
\frac{f(v_0+(s_1 - s_0)v_1)}{s_1-s_0}
\cr
f^{\rangle 2 \langle}(v_0,v_1,v_2;s_1,s_2) & := &
\frac{f(v_0)}{(s_0-s_1)(s_0-s_2)} +
\frac{f(v_0+(s_1 - s_0)v_1)}{(s_1-s_0)(s_1-s_2)} + \cr
& & \quad \quad \quad \quad \quad \quad
\frac{f(v_0+(s_2 - s_0)v_1 + 
(s_2 - s_1)(s_2-s_0)v_2)}{(s_2-s_0)(s_2-s_1)} \, ,
\cr
f^{\rangle k \langle}(\vvv;\sss )& := &
\frac{f(v_0)}
{\prod_{j=1,\ldots,k}(s_0-s_j)}
+ 
\sum_{i=1}^k
\frac{f\bigl(v_0 + \sum_{j=1}^i \prod_{m=0}^{j-1} (s_i - s_m) v_j\bigr)}
{\prod_{j=0,\ldots,\hat i,\ldots,k }(s_i-s_j)} .
\end{eqnarray*}
Define the {\em extended $k$-jet} by 
$\SJ^{\ra k \la } f: U^{\ra k \la } \to   W^{\ra k \la }$, 
$$
(\vvv;\sss) \mapsto \SJ^{\ra k \la } f(\vvv;\sss):=
\bigl( 
f(v_0),
f^{\rangle 1 \langle}(v_0,v_1;s_1),\ldots , 
f^{\rangle k \langle}(v_0,\ldots,v_k;s_1,\ldots,s_k) ; \sss) .
$$
\end{definition}

\begin{theorem}\label{DifferenceImbThm}
The map $g_k:U^{\la k \ra } \to U^{[k]}$ defines an imbedding of simplicial into cubic
difference calculus in the sense that, for all $f:U \to W$, we have
$$
  T^{]k[} f \circ g_k = (-1)^k  g_k \circ   \SJ^{\ra k \la } f : \quad \quad \quad 
\begin{matrix}
U^{\ra k \la } & \stackrel{\SJ^{\ra k \la } f} {\longrightarrow} & W^{\ra k \la } \cr
g_k \downarrow \phantom{g_k} & & (-1)^k g_k \downarrow \phantom{(-1)^k g_k} \cr
U^{]k[} &  \stackrel{  T^{]k[} f} {\longrightarrow}  & W^{]k[}
\end{matrix}
$$
\end{theorem}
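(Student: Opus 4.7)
The plan is a proof by induction on $k$. For the base case $k=1$, the map $g_1$ is the identity on $U^{[1]} = U^{\ra 1 \la}$, and by definition $f^{]1[}(v_0,v_1,s_1) = f^{\ra 1 \la}(v_0,v_1;s_1)$, so the diagram commutes with the stated sign.

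For the inductive step, I would exploit the recursive definition $T^{]k+1[}f = (T^{]k[}f)^{]1[}$. Given $(\vvv;\sss) = (v_0,\ldots,v_{k+1};s_1,\ldots,s_{k+1}) \in U^{\ra k+1\la}$, the explicit description of $g_{k+1}$ in Lemma \ref{DomainImbLemma} shows that
\[
g_{k+1}(\vvv;\sss) = (X, V, T) \in (U^{[k]})^{[1]},
\]
with $X = g_k(v_0,\ldots,v_k;s_1,\ldots,s_k)$, $T = s_{k+1} - s_k$, and $V$ a very sparse element of $V^{[k]}$ depending only on $v_{k+1}$ (its only nonzero space component is $u_{\{1,\ldots,k\}} = v_{k+1}$, and its only nonzero scalar component is the pivot $t_{\{k,k+1\}} = 1$). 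A direct check then shows that $X + TV$ is again of the form $g_k$ applied to a suitably ``shifted'' simplicial jet, obtained essentially by replacing $s_k$ by $s_{k+1}$ and adjusting $v_k$ via $v_{k+1}$ accordingly.

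Unpacking $T^{]k+1[}f(g_{k+1}(\vvv;\sss)) = \bigl(T^{]k[}f(X), (T^{]k[}f)^{]1[}(X,V,T), T\bigr)$ and applying the inductive hypothesis to both $T^{]k[}f(X)$ and $T^{]k[}f(X+TV)$ rewrites them as $g_k$-images of adjacent simplicial jets. The middle component
\[
(T^{]k[}f)^{]1[}(X,V,T) \;=\; \frac{T^{]k[}f(X+TV) - T^{]k[}f(X)}{s_{k+1}-s_k}
\]
then becomes a divided difference of values of $f^{\ra k \la}$. Matching this against the expected expression $(-1)^{k+1} g_{k+1}(\SJ^{\ra k+1\la}f(\vvv;\sss))$ reduces to verifying the Newton-type recursion
\[
f^{\ra k+1 \la}(\vvv;\sss) \;=\; \frac{f^{\ra k \la}(\text{shifted jet}) - f^{\ra k \la}(v_0,\ldots,v_k; s_1,\ldots,s_k)}{s_{k+1}-s_k},
\]
which one reads off directly from the explicit sum formula defining $f^{\ra k \la}$: all terms with arguments common to both jets cancel in pairs, and the remaining terms assemble into $f^{\ra k+1\la}$ after clearing denominators.

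The main obstacle is combinatorial bookkeeping. The imbedding $g_{k+1}$ distributes the data $(v_0,\ldots,v_{k+1};\sss)$ into coordinates indexed by subsets $\alpha \subset \{1,\ldots,k+1\}$ of $U^{[k+1]}$, most of which are zero, and one must verify that $T^{]k+1[}f$ preserves this sparsity pattern while correctly collecting the surviving coordinates into simplicial divided differences of ascending order. Particular care is needed to track the ``pivot'' scalar coordinates $t_\alpha = 1$ with $|\alpha| = 2$ versus the coordinates $t_{\{i\}} = s_i - s_{i-1}$, as it is precisely through the interplay of these two types of coordinates in the iterated difference quotient $(\cdot)^{]1[}$ that the global sign $(-1)^k$ accumulates relative to the symmetric sum convention used in the definition of $f^{\ra k \la}$.
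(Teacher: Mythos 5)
Your strategy is essentially the one behind the paper's proof, which is only a pointer to [Be10], Lemma 1.5 and Theorem 1.6 --- and those two results are exactly your two ingredients: the recursive decomposition of $g_{k+1}$ and the inductive compatibility with difference quotients. Your key computational claims check out. Writing $g_{k+1}(\vvv;\sss)=(X,V,T)$ one indeed finds $X=g_k(v_0,\dots,v_k;s_1,\dots,s_k)$, $T=s_{k+1}-s_k$, and
$X+TV=g_k\bigl(v_0,\dots,v_{k-1},\,v_k+(s_{k+1}-s_k)v_{k+1};\,s_1,\dots,s_{k-1},s_{k+1}\bigr)$,
whose $k$-th evaluation point coincides with the $(k+1)$-th evaluation point of the original jet, so the shifted jet lies in $U^{\ra k \la}$ and the induction hypothesis applies to it. The Newton-type recursion you invoke is also a correct identity: both $f^{\ra k \la}$-values are divided differences of $f\circ Q$ for the same Newton-form curve $Q(s)=v_0+\sum_{j=1}^{k+1}\prod_{m=0}^{j-1}(s-s_m)v_j$, taken over node sets differing only in the last node. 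One imprecision: the terms common to both jets do \emph{not} ``cancel in pairs''; they combine via the partial-fraction identity $\frac{1}{s_{k+1}-s_k}\bigl(\frac{1}{s_i-s_{k+1}}-\frac{1}{s_i-s_k}\bigr)=\frac{1}{(s_i-s_k)(s_i-s_{k+1})}$, which is what produces the correct denominators of $f^{\ra k+1 \la}$.

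The genuine gap is the sign. With the conventions of this paper, $g_1=\id$ and $f^{\ra 1\la}=f^{]1[}$ by definition, so the base case commutes with sign $+1$, not $(-1)^1$ as you assert; and in your inductive step the time variables (including the pivots $t_{\{i,i+1\}}=1$) pass through the difference quotient unchanged, while the Newton recursion carries a plus sign, so no sign ever accumulates. Carried out to the end, your argument proves $T^{]k[}f\circ g_k=g_k\circ\SJ^{\ra k \la}f$ with no factor $(-1)^k$. Your closing claim that the global sign ``accumulates'' through the interplay of the pivot coordinates and the coordinates $t_{\{i\}}$ is therefore not substantiated by the computation you set up, and as far as that computation goes it is false. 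You need either to pin down exactly where $(-1)^k$ is supposed to enter (it can only come from a normalization of $g_k$ or of the divided differences different from the one written in this paper, e.g.\ the one used in the cited source, where $s_0$ is a variable) or to state explicitly that your argument establishes the unsigned identity and that the stated sign is a convention to be reconciled. As written, the proof does not establish the equality in the precise form claimed.
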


\begin{proof}
See \cite{Be10}, Lemma 1.5 and Theorem 1.6.
\end{proof}

\begin{theorem} \label{SimpDifferenceFunctorialityTheorem}
Let $f:U \to W$ and $g:U' \to W'$ with $f(U)\subset U'$.
Then 
\begin{enumerate}[label=\roman*\emph{)},leftmargin=*]
\item
Functoriality: $  \SJ^{\ra k \la }(g\circ f) =   \SJ^{\ra k \la }g \circ  \SJ^{\ra k \la }f$  and $\SJ^{\ra k \la }\id_U =   \id_{\SJ^{\ra k \la }U} .$
\item
Homogeneity: for all $r \in \K^\times$, $  \SJ^{\ra k \la }f \circ \rho_{\la k \ra}(r)=\rho_{\la k \ra}(r) \circ  \SJ^{\ra k \la }f$.
\end{enumerate}
\end{theorem}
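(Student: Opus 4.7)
The plan is to reduce both claims to their cubic analogues (Theorem \ref{DifferenceFunctorTheorem}), exploiting the equivariant imbedding $g_k:U^{\la k\ra}\to U^{[k]}$ of Lemma \ref{DomainImbLemma} and the intertwining relation $T^{]k[}f\circ g_k=(-1)^k g_k\circ \SJ^{\ra k\la}f$ of Theorem \ref{DifferenceImbThm}. The key ingredients are that $g_k$ is injective (hence left-cancellable), $\K$-affine, and $\K^\times$-equivariant, and that it identifies simplicial difference calculus with its image inside cubic difference calculus, up to a consistent sign.

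For functoriality (i), I would apply the intertwining relation three times — once to $g\circ f$, once to $g$, and once to $f$ — and combine with the cubic functoriality $T^{]k[}(g\circ f)=T^{]k[}g\circ T^{]k[}f$ from Theorem \ref{DifferenceFunctorTheorem}(i). On the one hand, $g_k\circ \SJ^{\ra k\la}(g\circ f)$ equals $(-1)^k\, T^{]k[}g\circ T^{]k[}f\circ g_k$; on the other hand, $g_k\circ \SJ^{\ra k\la}g\circ \SJ^{\ra k\la}f$ is obtained from the same right-hand side by pushing the inner $g_k$ past $T^{]k[}f$ via the intertwiner (contributing another sign) and then the outer $g_k$ past $T^{]k[}g$ (a further sign), so the three sign factors combine consistently on both sides. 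Left-cancelling the injection $g_k$ then yields the desired equality. The identity statement $\SJ^{\ra k\la}\id_U=\id$ is immediate from the explicit divided-difference formulas: for $f=\id_V$, the Lagrange interpolation identities force $f^{\rangle j\langle}(\vvv;\sss)=v_j$.

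For homogeneity (ii), I would combine the same intertwiner with the $\K^\times$-equivariance $g_k\circ \rho_{\la k\ra}(r)=\rho_{[k]}(r)\circ g_k$ from Lemma \ref{DomainImbLemma} and the cubic homogeneity $T^{]k[}f\circ \rho_{[k]}(r)=\rho_{[k]}(r)\circ T^{]k[}f$ from Theorem \ref{DifferenceFunctorTheorem}(ii). Starting from $g_k\circ \SJ^{\ra k\la}f\circ \rho_{\la k\ra}(r)$ and chasing successively through (intertwiner) $\to$ (equivariance of $g_k$) $\to$ (cubic homogeneity) $\to$ (intertwiner, producing a second sign that cancels the first) $\to$ (equivariance of $g_k$), one arrives at $g_k\circ \rho_{\la k\ra}(r)\circ \SJ^{\ra k\la}f$; injectivity of $g_k$ concludes.

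The main obstacle I expect is the careful bookkeeping of the sign factor $(-1)^k$ in Theorem \ref{DifferenceImbThm}: each invocation of the intertwiner contributes a power of $-1$, and one must verify that an even total number of invocations appears in each chain so that the signs cancel cleanly. A direct proof from the explicit formulas for $f^{\rangle k\langle}$ is in principle possible but combinatorially prohibitive already for $k=2$; the imbedding $g_k$ is precisely the conceptual tool that reduces the proof to the (already-established) cubic case.
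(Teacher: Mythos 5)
Your proposal is correct and follows essentially the same route as the paper, which likewise derives both parts from the cubic functoriality/homogeneity theorem combined with the equivariant imbedding $g_k$ and the intertwining relation of Theorem \ref{DifferenceImbThm} (the paper defers the sign bookkeeping and the direct verification to \cite{Be10}, Theorem 2.10, just as you anticipate). The only point to watch is that commuting the cubic maps past the sign-twisted imbedding requires knowing how that twist interacts with $\SJ^{\ra k\la}g$, which is exactly the detail resolved in \cite{Be10}.
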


\begin{proof} Both statements 
 can be seen as a consequence of Theorems \ref{DifferenceFunctorTheorem} and 
 \ref{DifferenceImbThm} above.
An independent proof of  
(i) is given in \cite{Be10}, Theorem 2.10, and (ii) can also be proved by an easy
 direct computation.
\end{proof}

\section{Differential calculi}

Differential calculus is the extension of difference calculus to singular values.
In order to do this, we need additional structure, such as, e.g., topology.
We therefore
assume that $\K$ is a topological ring such that its unit group $\K^\times$ is
open dense in $\K$, and we assume
that $V,W$ are topological $\K$-modules, $U \subset V$ is open and
$f:U \to W$ is a continuous map. The class of continuous maps will be denoted by $\CC^0$
(see \cite{BGN04} for other ``$\CC^0$-concepts'').
There are two concepts of differential calculus, which we call ``cubic'' and ``simplicial''.

\begin{definition}[``cubic differentiability''] \label{cubicDef} We 
 say that $f:U \to W$ {\em is of class $\CC^{[1]}_\K$} (or just $\CC^{[1]}$ if the base ring
 $\K$ is clear from the context)  if there exists a continuous map
$f^{[1]}:U^{[1]} \to W$ extending the first order difference quotient map:
for all $(x,v,t) \in U^{]1[}$, we have
$f^{[1]}(x,v,t)=\frac{f(x+tv)-f(x)}{t}$.
By density of $\K^\times$ in $\K$, the map $f^{[1]}$ is unique if it exists, and so is
the value
$$
df(x)v:=f^{[1]}(x,v,0)=:\partial_v f(x).
$$
The {\em extended tangent map} is then defined by
$$
  T^{[1]} f: U^{[1]} \to W^{[1]},
 (x,v,t) \mapsto   T^{[1]}f(x,v,t)=:  T^{[1]}_{(t)}f(x,v):=\big( f(x), f^{[1]}(x,v,t),t \big) .
$$
The classes $\CC^{[k]}_\K$ (or shorter: just $\CC^{[k]}$) are defined by induction:
we say that $f$ is {\em of class $\CC^{[k+1]}$} if it is of class $\CC^{[k]}$
and if $f^{[k]}:U^{[k]} \to W$ is again of class $\CC^{[1]}$,
where $f^{[k]}:=(f^{[k-1]})^{[1]}$.
The {\em higher order extended tangent maps} are defined by
$  T^{[k+1]} f:=   T^{[1]} (  T^{[k]} f)$, and
the {\em $k$-th order cubic differentials}, at $x\in U$, are defined by 
$d^kf(x):V^k\to W, (v_1,\ldots,v_k)\mapsto \partial_{v_1}\ldots \partial_{v_k}f(x)$.
\end{definition}

\begin{theorem} \label{DifferentialFunctorTheorem}\label{SchwarzLemma}
Let $f:U \to W$ and $g:U' \to W'$ of class $\CC^{[k]}$ with $f(U)\subset U'$.
\begin{enumerate}[label=\roman*\emph{)},leftmargin=*]
\item
Functoriality: $  T^{[k]}(g\circ f) =   T^{[k]}g \circ  T^{[k]}f$ and $T^{[k]} \id_U =   \id_{T^{[k]}U}.$
\item
Homogeneity: for all $r \in \K^\times$, $  T^{[k]}f \circ \rho_{[k]}(r)=\rho_{[k]}(r) \circ   T^{[k]}f$.
\item
Linearity: the differential $df(x):V \to W$ is continuous and linear.
\item 
Symmetry: the $k$-th order cubic differential map $d^kf(x):V^k \to W$ is continuous,  $k$ times multilinear
and symmetric.
\end{enumerate}
\end{theorem}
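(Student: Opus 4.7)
The plan is to handle each of the four assertions by the same overarching \emph{density/continuity principle}: every identity to be proved holds on the non-singular subset $U^{]k[}$ either by Theorem \ref{DifferenceFunctorTheorem} or by elementary algebra of difference quotients, and both sides admit continuous extensions to all of $U^{[k]}$ because of the standing class-$\CC^{[k]}$ hypothesis; since $\K^\times$ is dense in $\K$, the set $U^{]k[}$ is dense in $U^{[k]}$ and the identity propagates.

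For (i) and (ii), I would induct on $k$. At $k=1$, both identities restricted to $U^{]1[}$ are precisely the assertions of Theorem \ref{DifferenceFunctorTheorem}; both sides admit continuous extensions to $U^{[1]}$ (for functoriality one first checks that $g\circ f$ is again of class $\CC^{[1]}$, with $(g\circ f)^{[1]} = g^{[1]} \circ T^{[1]} f$), and density concludes the step. The inductive step simply applies the $k=1$ case to the class-$\CC^{[1]}$ maps $T^{[k]}f$ and $T^{[k]}g$ via the relation $T^{[k+1]} = T^{[1]} \circ T^{[k]}$, and likewise for the equivariance of $\rho_{[k]}$.

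For (iii), continuity of $df(x):V\to W$ is immediate by restricting the continuous map $f^{[1]}:U^{[1]}\to W$ to the slice $\{x\}\times V\times\{0\}$. Additivity rests on the telescoping identity
\[
f^{[1]}(x,v+w,t) \;=\; f^{[1]}(x+tv,w,t) + f^{[1]}(x,v,t),
\]
valid for $t\in\K^\times$; letting $t\to 0$ through $\K^\times$ and invoking continuity of $f^{[1]}$ yields $df(x)(v+w)=df(x)v+df(x)w$. For $\K$-homogeneity, the elementary identity $f^{[1]}(x,sv,t)=s\,f^{[1]}(x,v,st)$ on $U^{]1[}$ gives $df(x)(sv)=s\,df(x)v$ for $s\in\K^\times$; since both sides depend continuously on $s\in\K$, density of $\K^\times$ in $\K$ extends this to all $s$.

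The main obstacle is (iv), which is Schwarz's Lemma. My strategy is to realize the manifestly symmetric ``divided square difference''
\[
\Delta^2(t_1,t_2) \;:=\; \frac{f(x+t_1v_1+t_2v_2)-f(x+t_1v_1)-f(x+t_2v_2)+f(x)}{t_1 t_2} \qquad (t_1,t_2\in\K^\times)
\]
as a value of $f^{[2]}$. A direct unpacking of $f^{[2]}=(f^{[1]})^{[1]}$ identifies $\Delta^2(t_1,t_2)$ with $f^{[2]}\bigl((x,v_2,t_2),(v_1,0,0),t_1\bigr)$, and by the symmetry of $\Delta^2$ under $(v_1,t_1)\leftrightarrow(v_2,t_2)$ also with $f^{[2]}\bigl((x,v_1,t_1),(v_2,0,0),t_2\bigr)$; passing to the limit $(t_1,t_2)\to(0,0)$ through $\K^\times\times\K^\times$ and using continuity of $f^{[2]}$ yields $d^2f(x)(v_1,v_2)=d^2f(x)(v_2,v_1)$. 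Multilinearity of $d^kf(x)$ then follows by iterating (iii) in each slot (the relevant partial maps being of class $\CC^{[1]}$ by (i) and the definition of $\CC^{[k]}$), and full symmetry of $d^kf(x)$ follows from the two-variable case applied to each adjacent transposition, as these generate the symmetric group $S_k$. The delicate point in this final step will be the careful bookkeeping required to match $\Delta^2$ with the correct specialization of $f^{[2]}$; once done, everything else is routine.
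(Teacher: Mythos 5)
Your proposal is correct and follows essentially the same route as the paper, whose own proof consists of the density argument for (i)--(ii) and a citation of \cite{BGN04} for (iii)--(iv): the telescoping and homogeneity identities for the first differential, and the identification of the symmetric second divided difference with a specialization of $f^{[2]}$, are exactly the arguments of that reference. You have merely written out in detail what the paper delegates, and your bookkeeping for $\Delta^2(t_1,t_2)=f^{[2]}\bigl((x,v_2,t_2),(v_1,0,0),t_1\bigr)$ checks out.
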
 

\begin{proof}
(i) and (ii) follow ``by density'' from Theorem \ref{DifferenceFunctorTheorem}.
Homogeneity of the differential is a special case of (ii), and additivity is proved in a
similar way (see \cite{BGN04}).
(iv) is a direct consequence of (iii) and of Schwarz' Lemma (\cite{BGN04}).
\end{proof}

Functoriality  is equivalent to saying that, for $t \in \K$ fixed,
$  T^{[1]}_{(t)}$ is a functor, and
for $t=0$ this gives the usual chain rule.
Moreover, for each $t$, the functor $  T^{[1]}_{(t)}$ commutes with direct products:
$ T^{[1]}_{(t)}(f \times g)$ is naturally identified with
$  T^{[1]}_{(t)} f \times   T^{[1]}_{(t)} g$. 
Analogously, for fixed time variables $\ttt$, $T^{[k]}_{(\ttt)}$ are functors commuting with direct products. 
In particular, for $\ttt=\ooo$,  $T^{[k]}_{(\ooo)}$ is a functor, called \textit{iterated tangent} functor  
and denoted by $T^k$. Note that $T^1=:T$ is the usual tangent functor. From this, 
we deduce that $T^{[k]}\K$, $T^{[k]}_{(\ttt)}\K$ and $T^k\K$ are again rings, with
 product and addition   obtained by applying the functor to product and addition in $\K$.
See \cite{Be10} for more information on these rings.

\begin{definition}[``simplicial differentiability''] \label{simplDef}
We say that $f$ {\em is of class $\CC^{\langle k \rangle}_\K$}, or just
{\em of class $\CC^{\langle k \rangle}$}, if, for all $1\leq \ell \leq k$,   there are
continuous maps $f^{\langle \ell \rangle}:U^{\langle \ell \rangle} \to W$
extending $f^{\rangle \ell \langle}$.
Note that, by density of $\K^\times$ in $\K$, the extension $f^{\langle \ell \rangle}$
is unique (if it exists), and hence in particular the value
$f^{\langle \ell \rangle}(\vvv ;\ooo)$,
called the {\em $\ell$-th order simplicial differential} is  uniquely determined.
We define also
$$
\SJ^{ \la k\ra} f: U^{\la k \ra } \to W^{\la k \ra }, \quad (\vvv;\sss) \mapsto
(f(v_0),f^{\la 1 \ra}(v_0,v_1;s_1),\ldots,f^{\la k \ra }(\vvv;\sss);\sss) ,
$$
and, for any fixed element $\sss \in \K^k$, we define the 
{\em simplicial $\sss$-extension of $f$} by
$$
\SJ_{(\sss)}^{ \la k\ra} f:  \SJ^{ \la k\ra}_{ ( \sss )} U \to W^{k+1}, \quad
\vvv \mapsto \SJ^{ \la k\ra}f(\vvv;\sss) \, ,
$$
where
$\SJ^{ \la k\ra}_{(\sss)} U := \bigl\{ \vvv \in V^{k+1} | \, (\vvv;\sss) \in U^{\la k \ra } \bigr\}$.
The {\em simplicial $k$-jet of $f$} is
$$
\SJ^k f := \SJ^{ \la k\ra}_{(\ooo)}f : \SJ^k U \to \SJ^k W, \quad
\vvv \mapsto \SJ^k f(\vvv) = \bigl( f^{\la \ell \ra}(v_0,\ldots,v_\ell;\ooo ) \bigr)_{\ell=0,\ldots,k} \, .
$$
where $f^{\la 0\ra}:=f$ by convention.
\end{definition}

\begin{theorem} \label{SimpDifferentialFunctorialityTheorem}
Let $f:U \to W$ and $g:U' \to W'$ of class $\CC^{\la k \ra }$ with $f(U)\subset U'$.
\begin{enumerate}[label=\roman*\emph{)},leftmargin=*]
\item
Functoriality: $  \SJ^{\la k \ra }(g\circ f) =   \SJ^{\la k \ra }g \circ  \SJ^{\la k \ra }f$  and 
$\SJ^{\la k \ra }\id_U =   \id_{\SJ^{\la k \ra }U} .$
\item
Homogeneity: for all $r \in \K^\times$, $  \SJ^{\la k \ra }f \circ \rho_{\la k \ra}(r)=\rho_{\la k \ra}(r) 
\circ  \SJ^{\la k \ra }f$.
\end{enumerate}
\end{theorem}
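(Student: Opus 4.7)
The plan is to transfer both statements from the difference-calculus analogue (Theorem \ref{SimpDifferenceFunctorialityTheorem}) to the differential setting via a density argument, using that $\SJ^{\la k \ra}f$ is by definition the unique continuous extension of $\SJ^{\ra k \la}f$ from the non-singular subset $U^{\ra k \la}$ to all of $U^{\la k \ra}$. Two preliminaries are needed: first, I would check that $U^{\ra k \la}$ is dense in $U^{\la k \ra}$; given any $(\vvv;\sss)\in U^{\la k \ra}$, the non-singularity condition requires $s_i-s_j\in\K^\times$ for $i\ne j$, which is an open dense condition on $\sss$, and since $U$ is open the defining inclusions $v_0+\sum_{j=1}^i\prod_{m=0}^{j-1}(s_i-s_m)v_j\in U$ persist under small perturbation of $\sss$. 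Second, I would verify that $\SJ^{\la k \ra}f$ maps $U^{\la k \ra}$ into $(U')^{\la k \ra}$, so that compositions make sense; this follows by continuity from the fact that $\SJ^{\ra k \la}f$ maps $U^{\ra k \la}$ into $(U')^{\ra k \la}$, which in turn is clear from the explicit formula since $f(U)\subset U'$.

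For functoriality, I would observe that $\SJ^{\la k \ra}g\circ\SJ^{\la k \ra}f$ is continuous on $U^{\la k \ra}$ as a composition of continuous maps. By Theorem \ref{SimpDifferenceFunctorialityTheorem}(i) applied on the open dense locus $U^{\ra k \la}$, this composition agrees with $\SJ^{\ra k \la}(g\circ f)$ there. Hence $\SJ^{\la k \ra}g\circ\SJ^{\la k \ra}f$ is a continuous extension of $\SJ^{\ra k \la}(g\circ f)$ to $U^{\la k \ra}$. By density of $U^{\ra k \la}$ in $U^{\la k \ra}$, such an extension is unique; in particular, its existence shows that $g\circ f$ is itself of class $\CC^{\la k \ra}$, and its simplicial $k$-jet equals the composition $\SJ^{\la k \ra}g\circ\SJ^{\la k \ra}f$. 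The identity statement $\SJ^{\la k \ra}\id_U=\id_{\SJ^{\la k \ra}U}$ is immediate from the definition of $\SJ^{\ra k \la}\id_U$ together with uniqueness of continuous extension.

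For homogeneity, I would argue in the same style. The scalar action $\rho_{\la k \ra}(r)$ is $\K$-linear and hence continuous, and it stabilises both $U^{\la k \ra}$ and $U^{\ra k \la}$ as noted in the appendix. Thus both $\SJ^{\la k \ra}f\circ\rho_{\la k \ra}(r)$ and $\rho_{\la k \ra}(r)\circ\SJ^{\la k \ra}f$ are continuous maps $U^{\la k \ra}\to W^{\la k \ra}$. By Theorem \ref{SimpDifferenceFunctorialityTheorem}(ii), they agree on the dense subset $U^{\ra k \la}$, hence on all of $U^{\la k \ra}$.

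The main obstacle I anticipate is not any of the three steps individually but the verification of density of $U^{\ra k \la}$ in $U^{\la k \ra}$: one needs to perturb the scalar tuple $\sss$ to make all differences $s_i-s_j$ units while simultaneously keeping the $k$ inclusions $v_0+\sum_{j=1}^i\prod_{m=0}^{j-1}(s_i-s_m)v_j\in U$ valid. Since $U$ is open and the defining polynomial expressions in $\sss$ are continuous, this reduces to showing that within any neighbourhood of a given $\sss\in\K^k$ one can choose $\sss'$ with all pairwise differences in $\K^\times$; this follows from density of $\K^\times$ in $\K$ applied finitely many times. Once this is in place, the rest of the argument is a standard ``extend by density, conclude by uniqueness'' pattern, exactly parallel to the way Theorem \ref{DifferentialFunctorTheorem} is deduced from Theorem \ref{DifferenceFunctorTheorem}.
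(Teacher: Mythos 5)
Your proposal is correct and follows exactly the paper's approach: the paper's entire proof is the one-line remark that the statement ``follows by density'' from Theorem \ref{SimpDifferenceFunctorialityTheorem}, and your argument is a careful elaboration of precisely that density-and-uniqueness-of-continuous-extension reasoning, including the (correct) verification that $U^{\ra k \la}$ is dense in $U^{\la k \ra}$ via finitely many applications of the density of $\K^\times$ in $\K$.
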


This follows ``by density'' from Theorem \ref{SimpDifferenceFunctorialityTheorem}.
As in the cubic case, for fixed time variables $\sss$, $\SJ^{\la k\ra}_{(\sss)}$
 is a functor commuting with direct products.  From this,
 it follows as above that $\SJ^{\la k\ra}\K$, $\SJ^{\la k\ra}_{(\ttt)}\K$ and $\SJ^k\K$ are again rings. 
Again, we refer to \cite{Be10} for more information on these rings. 
In particular, for $\sss=\ooo$, $\SJ^{\la k\ra}_{(\ooo)}$ is 
a functor called the \textit{$k$-th order jet functor}, denoted by $\SJ^k$. 
 Note that $\SJ^1=T^1$ is the usual tangent functor, and that
 for $\sss = \ooo$, functoriality gives
  equation (\ref{JetRule}).
According to \cite{Be10}, Theorem 1.7 and Corollary 1.11, there is an equivalent
characterization of the class $\CC^{\la k \ra }$ in terms of  ``limited expansions'',
having the advantage that no denominator terms appear:

\begin{theorem} \label{LimitedTheorem}
A map $f:U \to W$ is of class $\CC^{\langle k \rangle}$ if, and only if the following 
{\em simplicial limited expansions} hold: for all $1\leq \ell\leq k$, there exist continuous maps 
 $f^{\langle \ell \rangle}:U^{\langle \ell \rangle} \to W$ such that,
  whenever $(\vvv,\sss) \in U^{\langle \ell \rangle},$ 
\begin{eqnarray*}
f \bigl(v_0 + (s_1 - s_0)v_1) \bigr) & =
& f(v_0) + (s_1 - s_0) f^{\langle 1 \rangle} (v_0,v_1; s_0,s_1)
\cr
f \bigl(v_0 + (s_2 - s_0) (v_1 + (s_2 - s_1)v_2) \bigr)& =&
f(v_0) + (s_2 - s_0) f^{\langle 1 \rangle}(v_0,v_1;s_0,s_1)  + \cr
& & \quad 
(s_2 - s_1)(s_2-s_0) f^{\langle 2 \rangle} (v_0,v_1,v_2;s_0,s_1,s_2)
\cr
& \vdots & 
\cr
f \left(v_0 + \sum_{j=1}^k \prod_{\ell=0}^{j-1} (s_k - s_\ell) v_j\right) & =&
f(v_0) + \sum_{j=1}^k \prod_{\ell=0}^{j-1} (s_k - s_\ell) 
f^{\langle j \rangle} (v_0,\ldots,v_j;s_0,\ldots,s_j) 
\end{eqnarray*}
The maps $f^{\la\ell\ra}$ defined by this condition coincide with those defined in the
definition above.
\end{theorem}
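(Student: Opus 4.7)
The plan is to pivot both implications on the algebraic identity
$$ f(p_\ell) \;=\; f(v_0) + \sum_{j=1}^\ell \Bigl(\prod_{m=0}^{j-1}(s_\ell - s_m)\Bigr)\, f^{\ra j \la}(v_0,\ldots,v_j;s_1,\ldots,s_j), $$
valid on the non-singular set $U^{\ra \ell \la}$, where $p_\ell := v_0 + \sum_{j=1}^\ell \prod_{m=0}^{j-1}(s_\ell - s_m)v_j$ is precisely the argument appearing on the left of the $\ell$-th limited expansion. Substituting the explicit formula for each $f^{\ra j \la}$ on the right-hand side, the coefficient of each $f(p_i)$ collapses to $\delta_{i\ell}$ by a standard partial-fraction cancellation: this is the classical Newton interpolation identity at nodes $s_0,s_1,\ldots,s_\ell$, with the ``values'' $f(p_0),\ldots,f(p_\ell)$. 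I would verify it either inductively, exploiting the three-term recursion linking $f^{\ra \ell \la}$ to $f^{\ra \ell-1 \la}$ at shifted arguments, or by a direct partial-fraction computation (as the case $\ell=2$ already illustrates).

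For the forward direction, assume $f\in\CC^{\la k \ra}$ and let $f^{\la \ell \ra}:U^{\la \ell \ra}\to W$ be the continuous extensions of $f^{\ra \ell \la}$ furnished by the definition. The pivot identity, rewritten with $f^{\ra j \la}$ replaced by $f^{\la j \ra}$, is exactly the $\ell$-th limited expansion, and it holds on the dense open subset $U^{\ra \ell \la}\subset U^{\la \ell \ra}$; since both sides are continuous in $(\vvv;\sss)$ on $U^{\la \ell \ra}$ and $U^{\ra \ell \la}$ is dense there (by openness and density of $\K^\times$ in $\K$), the expansion extends to all of $U^{\la \ell \ra}$ by continuity.

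For the backward direction, assume the limited expansions hold with continuous maps $f^{\la \ell \ra}$, and prove by induction on $\ell$ that each $f^{\la \ell \ra}$ extends $f^{\ra \ell \la}$. For $\ell=1$ the expansion rearranges on $U^{\ra 1 \la}$ to $f^{\la 1 \ra}(v_0,v_1;s_0,s_1) = (f(v_0+(s_1-s_0)v_1)-f(v_0))/(s_1-s_0) = f^{\ra 1 \la}(v_0,v_1;s_1)$. In the inductive step, solve the $\ell$-th expansion for $f^{\la \ell \ra}$ on $U^{\ra \ell \la}$ (the factor $\prod_{m=0}^{\ell-1}(s_\ell-s_m)$ is invertible there) and substitute the already-identified divided-difference formulas for the lower $f^{\la j \ra}$, $j<\ell$; the pivot identity then forces the result to equal $f^{\ra \ell \la}(v_0,\ldots,v_\ell;s_1,\ldots,s_\ell)$, and continuity of $f^{\la \ell \ra}$ on all of $U^{\la \ell \ra}$ is given by hypothesis.

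The main obstacle is establishing the pivot identity itself. What makes it nonstandard compared to textbook Newton interpolation is that the ``nodes'' $p_i$ are not independent parameters but are polynomial expressions in $s_i$ and $v_1,\ldots,v_i$ sharing the common basepoint $v_0$. However, since all algebraic cancellations take place among scalars in $\K$, with denominators invertible on $U^{\ra \ell \la}$, and since the $f(p_i)$ can be treated as formally independent atomic symbols during the verification, the identity reduces cleanly to classical Newton interpolation. The rest of the argument is a routine passage by continuity and density.
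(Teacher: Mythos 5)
Your proof is correct. The paper itself gives no proof of this theorem, deferring to \cite{Be10} (Theorem 1.7 and Corollary 1.11); your ``pivot identity'' is exactly the Newton interpolation identity at the node $s_\ell$ with the values $f(p_0),\dots,f(p_\ell)$ treated as formal symbols, which is the argument underlying the cited result, and your handling of density of $U^{\ra \ell \la}$ in $U^{\la \ell \ra}$ and of the invertibility of $\prod_{m=0}^{\ell-1}(s_\ell-s_m)$ in the backward induction is sound, so your route is essentially the intended one.
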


\begin{theorem}[``cubic implies simplicial''] \label{CubictosimplicialTheorem}
If $f$ is of class $\CC^{[k]}$, then $f$ is of class $\CC^{\la k \ra }$, and the 
map $g_k:U^{\la k \ra } \to g_k(U^{\la k \ra }) \subset U^{[k]}$ defines a smooth imbedding of simplicial into cubic
differential calculus in the sense of Theorem
\ref{DifferenceImbThm}.
\end{theorem}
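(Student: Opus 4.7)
The plan is to bootstrap the purely algebraic imbedding statement of Theorem \ref{DifferenceImbThm} into the differential setting by a density argument, using the continuity of the cubic extension $T^{[k]} f$ together with the affine (hence continuous) nature of $g_k$.

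First, observe that if $f$ is of class $\CC^{[k]}$, then $f$ is automatically of class $\CC^{[\ell]}$ for every $\ell \leq k$, so it suffices to produce, for each such $\ell$, a continuous extension $f^{\la \ell \ra}$ of the divided difference $f^{\ra \ell \la}$ from $U^{\ra \ell \la}$ to $U^{\la \ell \ra}$. Fix $\ell \leq k$. By Lemma \ref{DomainImbLemma}, the map $g_\ell : U^{\la \ell \ra} \to U^{[\ell]}$ is $\K$-affine and injective, and its corestriction onto its image admits a $\K$-affine inverse; in particular $g_\ell$ is continuous and is a topological embedding. By Theorem \ref{DifferenceImbThm}, the identity
$$
  T^{]\ell[} f \circ g_\ell \; = \; (-1)^\ell \, g_\ell \circ \SJ^{\ra \ell \la} f
$$
holds on the dense subset $U^{\ra \ell \la} \subset U^{\la \ell \ra}$.

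Second, the left-hand side extends continuously to all of $U^{\la \ell \ra}$: indeed, $f \in \CC^{[\ell]}$ means precisely that $T^{]\ell[} f$ extends to the continuous map $T^{[\ell]} f : U^{[\ell]} \to W^{[\ell]}$, and composition with the continuous $g_\ell$ preserves continuity. Hence $(-1)^\ell \, g_\ell \circ \SJ^{\ra \ell \la} f$ also admits a continuous extension to $U^{\la \ell \ra}$. Since $g_\ell : W^{\la \ell \ra} \to W^{[\ell]}$ is a topological embedding with continuous inverse on its image, this is equivalent to saying that $\SJ^{\ra \ell \la} f$ itself extends to a continuous map $U^{\la \ell \ra} \to W^{\la \ell \ra}$. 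Reading off the component at the index $\alpha = \{1,\dots,\ell\}$ of the image (via the explicit inverse formula for $g_\ell$ in Lemma \ref{DomainImbLemma}) shows that $f^{\ra \ell \la}$ admits a continuous extension $f^{\la \ell \ra} : U^{\la \ell \ra} \to W$; the lower-order components yield the extensions of $f^{\ra j \la}$ for $j < \ell$ as well.

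Running this for $\ell = 1, \ldots, k$ shows that $f$ is of class $\CC^{\la k \ra}$. The resulting identity
$$
T^{[k]} f \circ g_k \; = \; (-1)^k \, g_k \circ \SJ^{\la k \ra} f
$$
then holds on all of $U^{\la k \ra}$ by continuity and density of $U^{\ra k \la}$, which is exactly the assertion that $g_k$ realizes an imbedding of simplicial into cubic differential calculus in the sense of Theorem \ref{DifferenceImbThm}. The main technical point to watch is that one must identify $f^{\la \ell \ra}$ from the \emph{correct} coordinate of $g_\ell \circ \SJ^{\ra \ell \la} f$; this is handled by the explicit description of $g_\ell^{-1}$ in Lemma \ref{DomainImbLemma}, which shows that the coordinate indexed by $\{1,\dots,\ell\}$ on the space-variable side is precisely $f^{\ra \ell \la}$ (up to the sign $(-1)^\ell$), so continuity transfers component-wise without ambiguity.
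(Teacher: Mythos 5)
Your argument is correct and is essentially the paper's own proof: the paper disposes of this theorem in one line (``this follows by density from Theorem \ref{DifferenceImbThm}''), and what you have written is precisely that density argument spelled out --- continuity of $T^{[\ell]}f$, the affine topological embedding $g_\ell$ with affine inverse on its (closed) image, and extraction of $f^{\la\ell\ra}$ from the $\{1,\dots,\ell\}$-indexed space coordinate. The only points you leave implicit (density of $U^{\ra\ell\la}$ in $U^{\la\ell\ra}$, and that the continuous extension lands in the closed affine image of $g_\ell$) are exactly the points the paper also leaves implicit.
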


\begin{proof}
This follows ``by density'' from Theorem \ref{DifferenceImbThm}
(see \cite{Be10}, Theorem 1.6).
\end{proof}

We conjecture that also ``simplicial implies cubic'', i.e., 
{\em if $f$ is $\CC^{\la \infty \ra }$, then it is also
of class $\CC^{[\infty]}$}, but at present this conjecture is not settled. Therefore we will work
throughout with a $\CC^{[k]}$-assumption.

\begin{corollary}\label{SimplicialDiffSmooth} If $f$ is of class $\CC^{[k]}$, then $f^{\la j \ra}$ is of class $\CC^{[k-j]}$, for all $j\leq k$.
\end{corollary}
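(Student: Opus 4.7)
The plan is to transfer the statement from simplicial to cubic calculus via the $\K$-affine imbedding $g_j$ of Lemma \ref{DomainImbLemma} and the identity of Theorem \ref{DifferenceImbThm}, and then invoke the essentially definitional fact that the cubic tangent operation $T^{[1]}$ lowers the class $\CC^{[\ell]}$ by one.

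First I would show, by induction on $j$, that $T^{[j]} f:U^{[j]}\to W^{[j]}$ is of class $\CC^{[k-j]}$ whenever $f$ is of class $\CC^{[k]}$. The case $j=0$ holds by assumption. For the inductive step, write $T^{[j+1]} f=T^{[1]}(T^{[j]}f)$; its components are the map $T^{[j]} f$ itself (of class $\CC^{[k-j]}\supseteq \CC^{[k-j-1]}$ by induction), the first-order cubic difference quotient $(T^{[j]} f)^{[1]}$ (of class $\CC^{[k-j-1]}$ by Definition \ref{cubicDef} applied to the class of $T^{[j]} f$), and a coordinate projection onto a $\K$-factor (smooth). Since $\CC^{[\ell]}$ for a map into a product module is controlled componentwise, $T^{[j+1]}f$ is of class $\CC^{[k-j-1]}$.

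By Theorem \ref{CubictosimplicialTheorem}, $f$ is of class $\CC^{\la j \ra}$, so the continuous extension $\SJ^{\la j \ra} f:U^{\la j \ra}\to W^{\la j \ra}$ exists. Theorem \ref{DifferenceImbThm} gives $T^{]j[} f\circ g_j=(-1)^j g_j\circ \SJ^{\ra j \la} f$ on the non-singular locus $U^{\ra j \la}$; density of $\K^\times$ in $\K$ makes $U^{\ra j \la}$ dense in $U^{\la j \ra}$, and both sides are continuous, so this identity extends to
\[
T^{[j]} f\circ g_j \ =\ (-1)^j\, g_j\circ \SJ^{\la j \ra} f \quad\text{on } U^{\la j \ra}.
\]
The explicit $\K$-affine formula for $g_j^{-1}$ in the proof of Lemma \ref{DomainImbLemma} extends to a $\K$-linear continuous retraction $\pi:W^{[j]}\to W^{\la j \ra}$, so applying $\pi$ on the left yields $\SJ^{\la j \ra} f=(-1)^j\pi\circ T^{[j]} f\circ g_j$, a composition of a $\CC^{[k-j]}$-map with $\K$-affine, respectively $\K$-linear continuous maps, hence of class $\CC^{[k-j]}$. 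Projection onto the last coordinate gives $f^{\la j \ra}$ of class $\CC^{[k-j]}$, as desired.

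The only delicate point is the componentwise bookkeeping in the induction step: one needs to observe that each component of $T^{[j]}f$ is either of the form $f^{[i]}\circ\psi$ for some $i\leq j$ and some $\K$-affine coordinate reshuffling $\psi$, or a coordinate projection onto a $\K$-factor, which makes the classwise estimate routine. Everything else reduces to stability of the class $\CC^{[\ell]}$ under composition with smooth maps and under componentwise assembly into a product.
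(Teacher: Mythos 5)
Your proposal is correct and follows essentially the same route as the paper: the paper's proof also uses the imbedding $g_j$ to regard $f^{\la j\ra}$ as a partial map of $f^{[j]}$ and then invokes the (essentially definitional) fact that $f^{[j]}$ is of class $\CC^{[k-j]}$ when $f$ is of class $\CC^{[k]}$. You merely spell out the bookkeeping (the induction on $T^{[j]}f$, the density argument, and the affine retraction) that the paper leaves implicit.
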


\begin{proof} 
Via the imbedding $g_j:U^{\la j \ra} \to U^{[j]}$,
we can consider $f^{\la j \ra}: U^{\la j \ra} \to W$ as a partial map 
of $f^{[j ]}: U^{[ j ]} \to W$. The latter is of class $\CC^{[k-j]}$, hence the former is
also of class $\CC^{[k-j]}$, and by the preceding theorem thus also of class
$\CC^{\la k-j\ra}$.
\end{proof}

\section{Continuous polynomial maps, and scalar extensions}
\label{PolyAppendix}

The purpose of this appendix is to show that continuous $\K$-polynomial mappings
are $\K$-smooth, and that their scalar extensions  by Weil $\K$-algebras $\bA$ are again
continuous, hence smooth over $\bA$.

\subsection{Continuous (multi-)homogeneous maps}
As in the main text, $V,W$ are topological modules over a topological ring $\K$.

\begin{theorem}[separation of homogeneous parts] \label{ContinuousDecomposition}
Assume $P = \sum_{i=0}^k P_i$ is a sum of $\K$-homogeneous maps $P_i:V \to W$
of degree $i$ (i.e., for all $r \in \K$, $P_i(r x)=r^i P_i(x)$).  Then the following are equivalent:
\begin{enumerate}
\item The map $P:V \to W$ is continuous.
\item For $i=0,\ldots,k$, the homogeneous part $P_i:V \to W$ is  continuous.
\end{enumerate}
Assume $P = \sum_{\Alpha\in \N^n} P_\Alpha :V^n\to W$ 
is a finite sum of  {\em multi-homogeneous} maps $P_\Alpha:V^n \to W$, 
with  $\Alpha=(\alpha_1,\ldots,\alpha_n)$, i.e., 
for all $r \in \K$ and  $1\leq i\leq n$, 
$P_\Alpha(x_1,\ldots, rx_i,\ldots, x_n)=r^{\alpha_i} P_\Alpha(x_1,\ldots,x_n)$.  
Then the following are equivalent:
\begin{enumerate}
\item The map $P:V^n \to W$ is continuous.
\item For all $\Alpha\in \N^n$, the multi-homogeneous part $P_\Alpha:V^n \to W$ is  continuous.
\end{enumerate}
\end{theorem}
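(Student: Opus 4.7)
The direction (2)$\Rightarrow$(1) is immediate: a finite sum of continuous maps is continuous. For the substantive direction (1)$\Rightarrow$(2) in the single-variable statement, the plan is to use the classical Vandermonde trick. For any $r \in \K$ one has the identity
$$
P(rx) = \sum_{i=0}^k r^i P_i(x),
$$
so if scalars $r_0,\dots,r_k \in \K$ are chosen with $r_i - r_j \in \K^\times$ for all $i\neq j$, then the Vandermonde matrix $V = (r_j^i)_{i,j=0,\dots,k}$ has determinant $\prod_{i<j}(r_j-r_i) \in \K^\times$, hence is invertible in $M_{k+1}(\K)$. Writing $V^{-1} = (w_{ij})$, one obtains
$$
P_i(x) = \sum_{j=0}^k w_{ij}\, P(r_j x),
$$
and since $x \mapsto P(r_j x)$ is continuous (composition of the continuous scaling $x \mapsto r_j x$ with the continuous $P$), each $P_i$ is a continuous $\K$-linear combination of continuous maps, hence continuous.

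The main obstacle is the existence of such $r_0,\dots,r_k$. I would prove this by induction on $n$: given $r_0,\dots,r_{n-1}$ already chosen with pairwise differences in $\K^\times$, the admissible values of $r_n$ form the set $\bigcap_{i<n}(r_i + \K^\times)$, a finite intersection of open dense subsets of $\K$ (each translate of $\K^\times$ is open dense because $\K^\times$ is open dense and translations are homeomorphisms). Non-emptiness of this intersection follows from the topological hypotheses on $\K$ (e.g.\ via a Baire-type argument in the setting of the paper); this is exactly the step where the density of $\K^\times$ is used in an essential way.

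For the multi-homogeneous statement, I would reduce to the single-variable case one variable at a time. Grouping multi-homogeneous components by their degree in the first variable,
$$
P(x_1,\dots,x_n) = \sum_{\alpha_1=0}^{k} Q_{\alpha_1}(x_1,\dots,x_n),\quad Q_{\alpha_1} := \sum_{\alpha_2,\dots,\alpha_n} P_\Alpha,
$$
so that $Q_{\alpha_1}$ is homogeneous of degree $\alpha_1$ in $x_1$ (with $x_2,\dots,x_n$ playing the role of continuous parameters). The Vandermonde formula above is universal in the sense that the coefficients $w_{\alpha_1,j}\in\K$ do not depend on the point, so it yields $Q_{\alpha_1}(x_1,\dots,x_n) = \sum_j w_{\alpha_1,j} P(r_j x_1, x_2,\dots,x_n)$, jointly continuous on $V^n$. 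Iterating this decomposition on the variables $x_2,\dots,x_n$ (a straightforward induction on $n$) separates out each $P_\Alpha$ as a continuous map, completing the proof.
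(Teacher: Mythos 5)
Your proof is correct, but it takes a genuinely different route from the paper's. You invert the full Vandermonde system $P(r_jx)=\sum_i r_j^i P_i(x)$ in one shot, which requires finding $r_0,\dots,r_k$ whose pairwise differences all lie in $\K^\times$; the paper instead runs a successive-elimination scheme, forming $Q_1(x)=r_1P(x)-P(r_1x)$ to kill the degree-$1$ part, then $Q_{i+1}(x)=r_{i+1}^{i+1}Q_i(x)-Q_i(r_{i+1}x)$, until only $\lambda P_k$ survives with $\lambda=(r_1-r_1^k)(r_2^2-r_2^k)\cdots(r_{k-1}^{k-1}-r_{k-1}^k)$, and then peels off $P_k$, $P_{k-1}$, etc. The trade-off: your approach is more symmetric and recovers all $P_i$ simultaneously with point-independent coefficients $w_{ij}$ (which you exploit cleanly in the multi-variable step), but it imposes a \emph{joint} condition on the $k+1$ scalars; the paper's scheme only ever needs each $r_i$ to satisfy an individual condition ($r_i\in\K^\times$ and $1-r_i^{k-i}\in\K^\times$), found one at a time inside a nonempty open subset of $\K^\times$. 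Your existence step is fine, but your justification is shakier than it needs to be: do not invoke a "Baire-type argument" --- a general topological ring with dense open unit group need not be a Baire space, and none is needed here. A \emph{finite} intersection of open dense subsets of any topological space is itself open and dense (elementary two-line argument), hence nonempty since $\K\ni 0,1$; that, plus invertibility of the Vandermonde determinant $\prod_{i<j}(r_j-r_i)\in\K^\times$ over the commutative ring $\K$ (via the adjugate), closes your argument. The reduction of the multi-homogeneous statement to the one-variable case, treating the remaining variables as continuous parameters and iterating over $x_1,\dots,x_n$, is exactly what the paper does.
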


\begin{proof}
We prove the first equivalence.
Obviously, (2) implies (1). Let us prove the converse. Assume that $P$ is continuous.
Since $P_0$ is constant, it is continuous. Without loss of generality,
we may assume that $P_0= 0$.
Fix scalars $r_1,\ldots,r_k \in \K$ and
define  continuous maps (depending on these scalars)
$$
Q_1(x):=r_1 P(x) - P(r_1x) = \sum_{i=2}^k (r_1 - r_1^i) P_i (x) ,
$$
$$
Q_{i+1}(x):= (r_{i+1})^{i+1} Q_i(x) - Q_i(r_{i+1} x).
$$
Then $Q_i$ is a linear combination of $P_{i+1},\ldots,P_k$, and in
particular, we find that
$
Q_{k-1}(x) = \lambda P_k(x)
$
with
$$
\lambda =
(r_1- r_1^k) (r_2^2 - r_2^k) \cdots (r_{k-1}^{k-1} - r_{k-1}^k) .
$$
In order to prove that $P_k$ is continuous,
it suffices to show that we
 may choose $r_1,\ldots,r_{k-1} \in \K$ such that the scalar $\lambda$ is
invertible, because then we have $P_k(x)=\lambda^{-1} Q_{k-1}(x)$.
Since
$Q_{k-1}$ is continuous by construction, it then follows that $P_k$ is continuous.

To prove our claim, write  $r_i^i - r_i^k = r_i^i (1 - r_i^m)$ with $m=k-i$.
Since the map $f:\K\to \K$, $r \mapsto 1-r^m$ is continuous and $\K^\times$
is open, $U:=f^{-1}(\K^\times)$ is open, and $U$ is non-empty since $0 \in U$.
Since $\K^\times$ is open and dense, $U':=U \cap \K^\times$ is open and
non-empty,
and we may choose $r_i \in U'$. Doing so for all $i$, we get an invertible
scalar
$\lambda$.

Having proved that $P_k$ is continuous, we replace $P$ by  $P-P_k$ and show as above
that $P_{k-1}$ is
continuous, and so on for all homogeneous parts.

\ssk
The second equivalence is proved similarly: proceed as above with respect
to the first variable $x_1$ in order to separate terms according to their
degree in $x_1$, then use the same procedure with respect to the
second variable $x_2$, and so on.
\end{proof}

\subsection{Continuous polynomial maps}
The following
 general definition of a $\K$-polynomial map, given in \cite{Bou}, ch.\ 4, par.\ 5, has been used in 
\cite{BGN04} (loc.\ cit., Appendix A, Def.\ A.5):

\begin{definition}\label{PolynomialDefinition}
A map $p:V \to W$ between $\K$-modules $V$ and $W$ is called {\em homogeneous
polynomial of degree $k$} if,
for any system $(e_i)_{i\in I}$ of generators of $V$, there exist coefficients
$a_\alpha \in W$ (where $\alpha: I \to \N$ has finite support and $\vert \alpha \vert := \sum_i \alpha_i = k$)
such that 
$$
p \left( \sum_{i\in I} t_i e_i \right) = \sum_{\alpha} t^\alpha a_\alpha 
\qquad  \mbox{ where } \, t^\alpha := \prod_i t_i^{\alpha_i} \, .
$$
If $V$ is free (in particular, if $\K$ is a field), then this is equivalent to saying that 
there exists a $k$ times multilinear map $m:V^k \to W$
such that $p(x)=m(x,\ldots,x)$.
In any case, a  {\em polynomial map} is a finite sum of homogeneous polynomial maps.

\ssk
The set of polynomial maps $p:V\to W$ is denoted by $\Pol(V,W)$, 
and we let $\Pol(V,W)_0 := \{ p:V \to W\ \textnormal{polynomial} \mid \, p(0)=0 \}$.
If $V=W$, the set of 
 polynomial maps $p:V\to V$ having an inverse polynomial map $q:V \to V$ 
forms  a group denoted by $\GPol(V)$, called the
{\em  general polynomial group of $V$}.  
By $\GPol(V)_0$ we denote the stabilizer subgroup of $0$. 
\end{definition}

Note that, if $p(x)=m(x,\ldots,x)$, then continuity of $p$ does not always imply
continuity of $m$ (if the integers are invertible in $\K$, then, by polarization, 
we may find symmetric and continuous $m$). 

\begin{theorem}\label{PolContinousSmooth}
Every continuous $\K$-polynomial map $p:V \to W$ is $\K$-smooth.
\end{theorem}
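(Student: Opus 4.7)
My plan is to reduce to the homogeneous case via Theorem~\ref{ContinuousDecomposition} and then, by induction on the order, to show that a continuous homogeneous polynomial $p:V\to W$ admits a continuous extended tangent map $p^{[1]}$ which is itself a continuous polynomial on $V\times V\times \K$. Concretely, Theorem~\ref{ContinuousDecomposition} lets me assume $p$ is homogeneous of degree $k$, since a sum of maps of class $\CC^{[\infty]}$ is again of class $\CC^{[\infty]}$. I would then expand $p(x+tv)$ in powers of $t$ by choosing a system of generators $(e_i)$ of $V$, writing $x=\sum x_i e_i$, $v=\sum v_i e_i$, and applying the binomial formula to the representation $p(y)=\sum_\alpha y^\alpha a_\alpha$. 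This yields
$$
p(x+tv) \; = \; \sum_{j=0}^{k} t^{j} \, r_j(x,v),
$$
where $r_0=p$ and each $r_j:V\times V\to W$ is a polynomial map (in the sense of Definition~\ref{PolynomialDefinition}) that is bi-homogeneous of bi-degree $(k-j,j)$ in $(x,v)$.

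The key step is now to apply the multi-homogeneous statement of Theorem~\ref{ContinuousDecomposition} to the continuous map $F:V\times V\times \K\to W$, $(x,v,t)\mapsto p(x+tv)$ (continuous as a composition of continuous maps). Its decomposition $F(x,v,t)=\sum_j t^j r_j(x,v)$ is a sum of multi-homogeneous components of pairwise distinct multi-degrees $(k-j,j,j)$ in the three arguments $(x,v,t)$, so the theorem yields continuity of each component $(x,v,t)\mapsto t^j r_j(x,v)$. Evaluating at $t=1$ gives continuity of each $r_j$ on $V\times V$, and consequently the map
$$
p^{[1]}(x,v,t) \; := \; \sum_{j=1}^{k} t^{j-1}\, r_j(x,v)
$$
is a continuous polynomial map on $V\times V\times \K$ which, on the locus $t\in\K^\times$, agrees with the difference quotient $(p(x+tv)-p(x))/t$. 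Thus $p$ is of class $\CC^{[1]}$ with $p^{[1]}$ again a continuous polynomial map on a topological $\K$-module. Iterating this construction shows $p\in\CC^{[\infty]}$, i.e.~$p$ is $\K$-smooth.

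The main obstacle is the passage from joint continuity of $F$ to continuity of the individual coefficient maps $r_j$: on $V\times V\times\K^\times$ one could divide by Vandermonde-type determinants, but this requires a density-plus-choice argument. The multi-homogeneous version of Theorem~\ref{ContinuousDecomposition} bypasses this cleanly, essentially internalizing the Vandermonde trick, and that is why I would route the argument through it rather than through direct manipulation of difference quotients. A minor bookkeeping point is to verify that the expansion really produces a sum of multi-homogeneous polynomials in the three arguments $(x,v,t)$, but this is immediate from the generator-based definition of $p$ together with the binomial expansion of $(x_i+tv_i)^{\alpha_i}$.
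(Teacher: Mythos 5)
Your proof is correct and follows essentially the same route as the paper: reduce to the homogeneous case via Theorem~\ref{ContinuousDecomposition}, expand $p(x+tv)$ into bi-homogeneous pieces, invoke the multi-homogeneous separation statement to get continuity of the coefficient maps, and conclude that $p^{[1]}$ is again a continuous polynomial, so induction gives $\CC^{[\infty]}$. The only cosmetic differences are that the paper separates the two-variable map $(x,v)\mapsto p(x+v)$ by bi-degree (rather than the three-variable map with $t$) and first treats the case $p(x)=m(x,\ldots,x)$ before lifting to a free module for general $V$, which is the same device you allude to for making the coefficients $r_j$ well defined.
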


\begin{proof} Assume $p:V \to W$ is continuous polynomial.
By theorem \ref{ContinuousDecomposition},
we may assume without loss of generality  that $p$ is homogeneous of degree $k$. 
Assume first that $p(x)=m(x,\ldots,x)$ with multilinear $m:V^k \to W$.
Since $f$ is continuous, 
$$
F: V \times V  \to W, \quad
(x,v) \mapsto p(x+v) = m (x+v,\ldots,x+v) \, .
$$
is continuous and polynomial. 
Using multilinearity, we expand  
\begin{equation} \label{DecompF}
p(x+v)=m(x+v,\ldots,x+v)=\sum_{i=0}^k  M^{k-i,i}(x,v) ,
\end{equation}
where $M^{k-i,i}(x,v)$ is the sum of all terms in the expansion of $m$ containing
$i$ times the argument $v$ and $k-i$ times the argument $x$.  
The $i$-th term  in (\ref{DecompF}) is the homogeneous part of bi-degree $(k-i,i)$ 
of the continuous map $F$, and hence,  by  theorem \ref{ContinuousDecomposition},
 is again a continuous function of $(x,v)$.
Now, for $t \in \K^\times$,
we get by a similar expansion as above
$$
p^{[1]}(x,v,t) = \frac{f(x+tv)-f(x)}{t} =\sum_{i=1}^k t^{i-1} M^{k-i,i} (x,v) ,
$$
and since $M^{k-i,i}(x,v)$ is continuous, the right hand side defines a continuous
function of $(x,v,t)$, proving that a continuous extension of the difference quotient
function exists, and hence $p$ is $\CC^{[1]}$.
Moreover, $p^{[1]}$ is again continuous polynomial (of total degree at most $2k-1$), 
hence by induction it follows that
$p$ is of class $\CC^{[\infty]}$.

\ssk
If $p(x)$ is not directly given by a multilinear map $m$, then
 fix a system of generators $(e_i)$ of $V$ and consider the free module $E$ spanned
by the $e_i$, together with its canonical surjection $E \to V$.
The map $F$ lifts to map $\tilde F : E^2 \times \K \to W$, which we may decompose as
above, giving rise to a map $\tilde m$ and to maps $\tilde M^{k-i,i}$. Passing again to the quotient, we see that
the bi-homogeneous components $M^{k-i,i}$ are still continuous maps, and $p^{[1]}$
can be extended continuously as above.
\end{proof}

\begin{definition}
Assume $p:V \to W$ is a polynomial map of the form
$p(x)=m(x,\ldots,x)$ where $m:V^k \to W$ is $\K$-multilinear.
For $n \in \N$, $\Alpha =(\alpha_1,\ldots,\alpha_n) \in \N^n$ with
$\vert \Alpha \vert := \sum_{i=1}^n \alpha_i = k$ and $\vvv = (v_1,\ldots,v_n) \in V^n$ , let
$$
M^\Alpha (\vvv):= m(v_1:\alpha_1 \, ; \ldots; \, v_n:\alpha_n) 
$$
be the sum of all terms $m(w_1,\ldots,w_k)$ with exactly $\alpha_i$ among $w_1,\ldots,w_k$  equal to $v_i$.
\end{definition}

\begin{lemma}\label{MalphaLemma}
Assume  $p:V \to W$ is a polynomial map of the form
$p(x)=m(x,\ldots,x)$ where $m:V^k \to W$ is $\K$-multilinear.
Then, if $p$ is continuous, so is the  map
$$
M^\Alpha : V^n \to W, \quad \vvv = (v_1,\ldots,v_n) \mapsto M^\Alpha(\vvv) \, ,
$$
and it does not depend on the choice of $m$, so that we may write $p^\Alpha (\vvv):=M^\Alpha(\vvv)$. 
\end{lemma}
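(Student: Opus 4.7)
The plan is to express $M^\Alpha(\vvv)$ as the multi-homogeneous component (of multidegree $\Alpha$) of a continuous auxiliary map built from $p$, and then invoke the second part of Theorem \ref{ContinuousDecomposition}.

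First I would introduce the continuous addition map $\Sigma : V^n \to V$, $(v_1,\ldots,v_n) \mapsto v_1 + \cdots + v_n$, and consider $P := p \circ \Sigma : V^n \to W$, which is continuous as a composition of continuous maps. Exploiting $k$-multilinearity of $m$, I expand
$$
P(\vvv) = m(v_1 + \cdots + v_n,\ldots,v_1 + \cdots + v_n)
= \sum_{\Alpha \in \N^n,\, |\Alpha|=k} M^\Alpha(\vvv),
$$
where $M^\Alpha(\vvv)$ collects exactly those $k$-fold terms of the expansion in which each $v_i$ appears $\alpha_i$ times. Directly from its definition, $M^\Alpha$ is multi-homogeneous of multidegree $\Alpha$: rescaling $v_i \mapsto r v_i$ multiplies $M^\Alpha$ by $r^{\alpha_i}$, since each of the $\alpha_i$ slots occupied by $v_i$ contributes a factor $r$ by multilinearity of $m$. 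Applying the second equivalence in Theorem \ref{ContinuousDecomposition} to the continuous map $P$ (with its multi-homogeneous decomposition above) immediately yields continuity of each $M^\Alpha$.

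For the independence from the choice of $m$, suppose $m'$ is another continuous $k$-multilinear map with $m'(x,\ldots,x) = p(x)$ for all $x \in V$, and form $M'^\Alpha$ analogously. Then both $\{M^\Alpha\}_{|\Alpha|=k}$ and $\{M'^\Alpha\}_{|\Alpha|=k}$ give multi-homogeneous decompositions of the very same map $P = p \circ \Sigma$. The uniqueness of such decompositions follows from the separation argument used in the proof of Theorem \ref{ContinuousDecomposition}: iterating the construction of the maps $Q_{i+1}$ with respect to each variable $v_i$ in turn, one expresses $M^\Alpha$ as a fixed $\K$-linear combination of values $P(r_1 v_1,\ldots,r_n v_n)$ for suitable $r_i \in \K^\times$ (chosen so that the relevant scalar factors are invertible, which is possible by openness and density of $\K^\times$). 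Since this expression only sees $P$, we conclude $M^\Alpha = M'^\Alpha$, justifying the notation $p^\Alpha(\vvv)$.

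The only mildly delicate point is the uniqueness of the multi-homogeneous decomposition; everything else is a direct appeal to earlier results. In particular, one should not attempt to argue uniqueness by polarization, since the integers need not be invertible in $\K$ — the separation-of-variables argument already established in Theorem \ref{ContinuousDecomposition} is precisely what makes this step go through in arbitrary characteristic.
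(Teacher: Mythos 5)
Your proof is correct and follows essentially the same route as the paper: both consider the continuous map $\vvv \mapsto p(v_1+\cdots+v_n)$, identify $M^\Alpha$ as its multi-homogeneous component of multidegree $\Alpha$, and invoke the second equivalence of Theorem \ref{ContinuousDecomposition}. Your explicit treatment of the independence from the choice of $m$ via uniqueness of the multi-homogeneous decomposition is a welcome elaboration of a point the paper leaves implicit, and it rests on exactly the separation argument the paper uses.
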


\begin{proof} This follows as above, by considering the continuous map
$$
F: V^n \to W, \quad \vvv  \mapsto p \left( \sum_{i=1}^n v_i \right) =
m \left( \sum_{i=1}^n  v_i ,\ldots,  \sum_{i=1}^n v_i  \right) 
$$
whose $\Alpha$-homogeneous component is precisely $M^\Alpha$.
\end{proof}

\subsection{Scalar extensions}
A {\em scalar extension} of $\K$ is, by definition, a commutative and associative $\K$-algebra
$\bA$. 
If $\bA$ is unital, then there is a natural map $\K \to \bA$, $t \mapsto t \cdot 1$. 

\begin{definition}\label{PolynomialScalarExtension}
Let $p:V \to W$ be a $\K$-polynomial map, homogeneous of degree $k$.
If  $p(x)=m(x,\ldots,x)$ with multilinear $m:V^k \to W$,
then the {\em scalar extension from $\K$ to $\bA$} of $p$ is the map 
$$
p_\bA : V_\bA = V \otimes_\K \bA \to W_\bA, \quad
v \otimes a \mapsto P(v) \otimes a^k = m_\bA (av,\ldots,av),
$$
where $m_\bA : (V_\bA)^k \to W_\bA$ is the multilinear map defined by the 
universal propery of the tensor product. 
If $V$ is not free, let as above $E \to V$ be the surjection defined by a system of
generators, define $\tilde P_\bA : E_\bA \to W_\bA$; then this map passes to
$V$ as a map $P_\bA:V_\bA \to W_\bA$.
\end{definition}

\begin{theorem}\label{PolynomialScalarExtensionSmooth}
Assume $\K$ is a topological ring with dense unit group, and $\bA$ a scalar extension of $\K$ 
which is a topological $\K$-algebra, homeomorphic to $\K^n$ with respect to some
  $\K$-basis $a_1,\ldots,a_n$.  
Assume $p:V \to W$ is continuous polynomial over $\K$. 
We equip $V_\bA$ with the product topology with respect to the decomposition
$$
V_\bA = V \otimes_\K (\bigoplus_{i=1}^n \K a_i) =
\bigoplus_{i=1}^n (V \otimes a_i) ,
$$
and similarly for $W_\bA$. 
Then $p_\bA : V_\bA \to W_\bA$ is a continuous $\bA$-polynomial map. 
\end{theorem}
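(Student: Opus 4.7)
The plan is to reduce to the case of a single $\K$-homogeneous polynomial coming from a multilinear map, and then compute $p_\bA$ explicitly in the fixed basis $a_1,\ldots,a_n$ of $\bA$, using Lemma \ref{MalphaLemma} to recognize the coordinate components as continuous maps already built out of $p$.

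First I would reduce to the homogeneous case: by Theorem \ref{ContinuousDecomposition}, the decomposition $p = \sum_i p_i$ into $\K$-homogeneous parts has continuous summands; moreover, scalar extension is additive and preserves degree, so $p_\bA = \sum_i (p_i)_\bA$ and it suffices to prove the statement for each $p_i$. After passing through a free cover $E \to V$ as in Definition \ref{PolynomialScalarExtension}, I may assume $p$ is of the form $p(x)=m(x,\ldots,x)$ with $m:V^k\to W$ a $\K$-multilinear map, so that $p_\bA(v)=m_\bA(v,\ldots,v)$ where $m_\bA:(V_\bA)^k\to W_\bA$ is the $\bA$-multilinear map provided by the universal property of the tensor product. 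This already gives the $\bA$-polynomial character of $p_\bA$ automatically.

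For continuity, write an arbitrary element of $V_\bA$ in the form $v = \sum_{i=1}^n v_i \otimes a_i$ with $v_i \in V$, so that $V_\bA \cong V^n$ as topological $\K$-modules. Expanding by $\bA$-multilinearity of $m_\bA$ and using commutativity of $\bA$ to group terms by the multi-index $\Alpha \in \N^n$ with $|\Alpha|=k$ recording how often each index $i$ appears among $i_1,\ldots,i_k$, one obtains
$$
p_\bA(v) \;=\; \sum_{i_1,\ldots,i_k=1}^n m(v_{i_1},\ldots,v_{i_k}) \otimes a_{i_1}\cdots a_{i_k}
\;=\; \sum_{\Alpha\in\N^n,\,|\Alpha|=k} p^\Alpha(v_1,\ldots,v_n) \otimes a^\Alpha,
$$
where $a^\Alpha := \prod_{j=1}^n a_j^{\alpha_j}$ is a fixed element of $\bA$ and $p^\Alpha = M^\Alpha$ is the symmetrized coefficient map of Lemma \ref{MalphaLemma}. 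By that lemma, each $p^\Alpha : V^n \to W$ is continuous; since the topology on $W_\bA$ is the product topology with respect to the basis $a_1,\ldots,a_n$, multiplication by the constant $a^\Alpha \in \bA$ is continuous, and the right-hand side is a finite sum of continuous maps $V^n \to W_\bA$. Hence $p_\bA$ is continuous.

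The substantive work is already packaged in Theorem \ref{ContinuousDecomposition} and Lemma \ref{MalphaLemma}; once those deliver continuity of the symmetrized components $M^\Alpha$, what remains is bookkeeping in the basis of $\bA$. The only minor additional checks are that the expression above descends from the free cover $E \to V$ back to $V$ (which is clear since both sides are computed from the induced multilinear form on $V^k$), and that the result is intrinsic to $\bA$ rather than to the chosen basis: any change of $\K$-basis of $\bA$ is given by an element of $\mathrm{GL}(n,\K)$, which acts by a homeomorphism on $V_\bA \cong V^n$ and on $W_\bA \cong W^n$, so continuity is unaffected.
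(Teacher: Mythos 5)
Your proposal is correct and follows essentially the same route as the paper: reduce to a continuous homogeneous $p(x)=m(x,\ldots,x)$ (via Theorem \ref{ContinuousDecomposition} and the free cover $E\to V$), expand $p_\bA\bigl(\sum_i v_i\otimes a_i\bigr)=\sum_{|\Alpha|=k}M^\Alpha(\vvv)\otimes a^\Alpha$, and invoke Lemma \ref{MalphaLemma} for the continuity of each $M^\Alpha$. Your explicit remarks on basis-independence via $\GL_n(\K)$ and on the additivity of scalar extension are consistent with (and slightly more detailed than) the paper's own write-up.
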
 

\begin{proof}
Note that the  topology on $V_\bA$ does not depend on the choice of the $\K$-basis of $\bA$ since the
group $\GL_n(\K)$ acts by homeomorphisms. 
Assume first that $p$ is homogeneous of degree $k$ and of the form
$p(x)=m(x,\ldots,x)$.  Then we have: 
\begin{eqnarray*}
p_\bA\left(\sum_{i=1}^n v_i\otimes a_i\right) &=&
m_\bA \left( \sum_{i=1}^n v_i\otimes a_i,\ldots, \sum_{i=1}^n v_i\otimes a_i\right) 
\\
&=&\sum_{j_1,\ldots,j_k=1}^n  m(v_{j_1},\ldots, v_{j_k})\otimes a_{j_1}\cdots a_{j_k}
\\
&=&\sum_{(\alpha_1\ldots,\alpha_n) \in \N^n , 
\sum_{i=1}^n \alpha_i=k} M^{\alpha_1,\ldots,\alpha_n}(v_1,\ldots,v_n)\otimes a_1^{\alpha_1}\cdots a_n^{\alpha_n},
\\
&=& \sum_{\Alpha \in \N^n, |\Alpha |=k}M^{\Alpha}(\vvv)\otimes \boldsymbol{a^\alpha} 
\end{eqnarray*}
According to Lemma \ref{MalphaLemma}, the map $M^\Alpha$ is  continuous, and hence
$p_\bA$ is continuous.
If $p$ is not of the form $p(x)=m(x,\ldots,x)$, then we 
use similar arguments as at the end of the proof of Theorem \ref{PolContinousSmooth}.
\end{proof}

\begin{remark}
If $\bA$ is unital, then we have a natural injection
$\sigma^\bA:V \to V^\bA$, $v \mapsto v \otimes_\K 1_\bA$, and 
$p_\bA$
``extends''  $p$ in the sense that $p_\bA\circ\sigma^\bA=\sigma^\bA\circ p$.

\end{remark}

\begin{remark}\label{ContinuousRemark}
If $p$ is as in the theorem, depending moreover continuously on a parameter $y$ (say, $p(x)=p_y(x)$, jointly continuous
in $(x,y)$), then, since $M^\Alpha$ does not depend on the choice of $m$ (see lemma \ref{MalphaLemma}),
the proof of the theorem shows that $(y,z) \mapsto (p_y)_\bA(z)$ is again jointly continuous in $(y,z)$.
\end{remark}



\begin{thebibliography}{9999999}

\bibitem[Be08]{Be08} 
Bertram, W., {\em
Differential Geometry, Lie Groups and Symmetric Spaces over General 
Base Fields 
and Rings},  Memoirs of the AMS, volume {\bf 192}, number 900 (2008). 
       arXiv: math.DG/0502168 
       %
\bibitem[Be10]{Be10} 
Bertram, W., 
``Simplicial differential calculus, divided differences, and construction of Weil functors'',
to appear in Forum Math.,
 arxiv :  math.DG/1009.2354
%
\bibitem[Be11]{Be11} 
Bertram, W., {\em Calcul diff\'erentiel topologique 
\'el\'ementaire},  Calvage et Mounet, Paris 2011; voir
http://www.iecn.u-nancy.fr/ $\tilde{ }$ bertram/livre.pdf

\bibitem[BGN04]{BGN04} 
Bertram, W., H. Gloeckner and K.-H. Neeb, 
``Differential Calculus over general base fields and rings'',  
Expo. Math. 22 (2004), 213 --282.

\bibitem[Bou]{Bou}
Bourbaki, N., {\em Alg\`ebre. Chapitres 4-5}, Hermann, Paris 1971

%
%
%
\bibitem[Ko81]{Ko81}
Kock, A., {\em Synthetic Differential Geometry}, London Math.\ Soc.\ Lecture Notes {\bf 51},
Cambridge, 1981.

\bibitem[KMS93]{KMS}
Kolar, I., P.\ Michor and J.\ Slovak, {\em Natural Operations in Differential Geometry},
Springer, Berlin, 1993.

\bibitem[K08]{Kolar}
Kolar, I., ``Weil bundles as generalized jet spaces'',  pp. 625-664 in: D.\ Krupka and D.\ Saunders
(eds.):
Handbook of Global Analysis, Elsevier, Amsterdam, 2008.
%
\bibitem[K00]{K00}
Kure\v s, M.,
``On the simplicial structure of some Weil bundles'',
Rend. del Circ. Mat. Palermo {\bf 63} (2000), 131 -- 140.
 %
\bibitem[KM04]{KM04}
Kure\v s, M., and W.M.\ Mikulski, 
``Natural operators lifting vector fields to bundles of Weil contact elements'',
Czechoslowak Math.\ J.\ {\bf 54 (129)} (2004), 855-867
%
%
%
\bibitem[Re83]{Re83}
Reinhart, B.L., {\em Differential Geometry of Foliations -- The Fundamental
Integrability Problem}, Springer, Berlin 1983

%
%
\bibitem[Weil]{Weil}
Weil, A.,  ``Th\'eorie des points proches sur les vari\'et\'es
diff\'erentiables'', Colloque de G\'eom. Diff., Strasbourg 1953, pp. 111--117
(dans : A. Weil, \OE uvres  scientifiques, Vol. 2, Springer,
   New York  1980, pp. 103--109)
  %
\bibitem[White]{White}
White, J.E., {\em The Method of Iterated Tangents with Applications in Local Riemannian
Geometry}, Pitman, Boston 1982.
%
%
\end{thebibliography}
\end{document}